\documentclass[a4paper,11pt]{article}
\usepackage{mathtext}         %

\usepackage{latexsym}
\usepackage{amsmath,amssymb,amsthm,amsopn,amscd}
\usepackage[dvips]{graphicx}
\usepackage{array,fullpage,wrapfig}

\usepackage[matrix,arrow,curve]{xy}
\usepackage[active]{srcltx}

\usepackage{euscript}

%
%

\usepackage{ulem}
\usepackage[usenames,dvipsnames]{color}
\def\cred{\color{red}} \def\cbl{\color{blue}}  

\textwidth 17.5cm 
\textheight 235 mm
\hoffset= -0.7cm


\newcommand\g{{\mathbf{g}}}
\newcommand\h{{\mathbf{h}}}
\newcommand{\p}{{\mathbf{p}}}
\newcommand{\n}{{\mathbf{n}}}

\newcommand\gl{{\bf gl}}

\newcommand\Wng{W_n\ltimes \g\otimes\calO_n}
\newcommand\gtg{\omega(\g)}   
\newcommand\W{\EuScript{W}}

\newcommand\Wnm{{ W_n\ltimes W_m\otimes\calO_n}}

\newcommand{\Z}{{\mathbb{Z}}}
\newcommand\C{{\mathbb C }}
\newcommand\R{{\mathbb R }}
\newcommand\?{{\rm M}}

\newcommand{\quis}{quis}


\def\b{{\mathbf p}_{m n}} 

\newcommand\bm{{{\mathbf p}_{m n}^{-}}}
\newcommand\nm{{{\mathbf n}^{-}_{m n}}}

\newcommand\np{{{\mathbf n}^{+}_{m n}}}

\newcommand\VU{{\hbox to 11pt {V\hss U}}}

\newcommand\calO{{\mathcal O}}

\newcommand\kk{\Bbbk}
\def\?{{\rm M}}
\newcommand\F{{\mathcal F}}

\newcommand\dn{\overline{n}}

\newcommand{\dx}[1]{\frac{\partial}{\partial x_{#1}}}

\newcommand\rH{{\rm H}}

\newcommand{\ldot}{{\:\raisebox{1.5pt}{\selectfont\text{\circle*{1.5}}}}}
\newcommand{\udot}{{\:\raisebox{4pt}{\selectfont\text{\circle*{1.5}}}}}
\newcommand\pt{\udot}

\newcommand\GL{\mathbf{GL}}

\newcommand\OP{\mathop\oplus\limits}
\newcommand\OT{\mathop\otimes\limits}

\newcommand\ttt{\text{-}}

\DeclareMathOperator{\Hom}{Hom}

\let\ge\geqslant
\let\leq\leqslant
\let\geq\geqslant
\let\cong\simeq


\DeclareMathOperator*{\ooplus}{{\oplus}}

\newcommand{\rf}[1]{(\ref{#1})}
\newcommand{\picref}[1]{\ref{#1}}


\newtheorem{theorem}[equation]{Theorem}
\newtheorem*{theorem*}{Theorem}
\newtheorem{proposition}[equation]{Proposition}
\newtheorem*{proposition*}{Proposition}
\newtheorem{statement}[equation]{Statement}
\newtheorem{lemma}[equation]{Lemma}
\newtheorem{corollary}[equation]{Corollary}
\newtheorem*{corollary*}{Corollary}

\theoremstyle{definition}

\newtheorem{definition}[equation]{Definition}

\theoremstyle{remark}
\newtheorem{remark}[equation]{Remark}
\pagestyle{plain}

\begin{document}

\title{
Characteristic classes of flags of foliations \\
and \\
Lie algebra cohomology
}

\author{Anton Khoroshkin
\thanks{
The research is partially supported by RFBR grants 13-02-00478, 13-01-12401,  
by "The National Research University--Higher School of Economics" Academic Fund Program in 2013-2014, 
research grant 14-01-0124, by Dynasty foundation and Simons-IUM fellowship.
}
}
\date{}

\maketitle

\begin{abstract}
We prove the conjecture by Feigin, Fuchs and Gelfand 
{
describing
} 
the Lie algebra cohomology of formal vector fields on 
{
an
}
 $n$-dimensional space
with coefficients in symmetric powers of the coadjoint representation.
We also compute the cohomology of the Lie algebra of formal vector fields that preserve a given flag at the origin.
The latter encodes characteristic classes of flags of foliations and 
{
was
} 
used in the 
formulation of the
local Riemann-Roch Theorem by Feigin and Tsygan.

Feigin, Fuchs and Gelfand described the first symmetric power and to do this they had to make use of a fearsomely complicated computation in invariant theory. 
By the application of degeneration theorems of appropriate Hochschild-Serre spectral sequences we avoid the need to use the methods of FFG, 
and moreover we are able to describe all the symmetric powers at once.
\end{abstract}

\tableofcontents

\setcounter{section}{-1}

\section{Introduction}

\subsection{Main results}
The main result of this paper is 
a computation of
the homology 
of 
the Lie algebra $W_n$ of formal vector fields on 
an
$n$--dimensional space.
$$
W_n:= \{\sum_{i=1}^{n} f_i \frac{\partial}{\partial x_i} | f_i\in \kk[[x_1,\ldots,x_n]] \}
$$
In the early 70's B.\,Feigin, D.\,Fuchs and I.\,Gelfand stated  a conjectural description of its cohomology  
with coefficients in symmetric powers of the coadjoint representation. They 
described
applications of this conjecture 
to formal geometry, Gelfand-Fuchs cohomology and 
the
theory of foliations. 
This
problem was formulated 
at
the famous Gelfand seminar at Moscow State University.
In~\cite{GFF} the same authors 
confirmed their conjecture for the 
particular case of the first symmetric power of the coadjoint representation.
Later on, in 1989, this cohomological conjecture was 
offered as a proposition\footnote{without a proof}
 by B.\,Feigin and B.\,Tsygan 
in \cite{Feigin::Tsygan} and was used in the proof of the local Riemann-Roch theorem. 
In 2003, V.\,Dotsenko (\cite{Dotsenko}) directly computed the cohomology for the linear dual problem in the case $n=1$.
Namely, he computed the homology of the Lie algebra of polynomial vector fields on the line with coefficients in 
symmetric powers of the
adjoint representation. 

In this article a uniform method will be presented, 
which does not involve lengthy computations with invariants and by means of which the cohomologies 
may be described for all cases of positive $n$ and all symmetric powers at once. 
The main results can be summarised in the following two statements, proved in Sections 2.3 and 2.2 respectively.
\begin{theorem*}[Theorem~\ref{thm::sym_coef}, Corollary~\ref{thm::Wn:SWn:relative}]
 For all $k\ge1$ the cohomology of the Lie algebra $W_n$ relative to the subalgebra $\gl_n$ of linear vector fields
with coefficients in
 the 
$k$-th symmetric power of 
 the 
coadjoint representation vanishes everywhere except 
for
degree $2n$ 
and 
the $2n$th 
cohomology 
coincides 
with the space of $\gl_n$-invariants in the $(n+k)$-th symmetric power of adjoint representation $\gl_n$:
\begin{equation}
\label{eq::H:Wn::intro}
\rH^{2n}(W_n,\gl_n;S^{k}W_n^{*}) = [S^{n+k}\gl_n]^{\gl_n}. 
\end{equation}
The absolute cohomology has the following description:
$$\rH^{i}(W_n;S^{k}W_n^{*}) =
\left\{ \begin{array}{l}
         {[S^{n+k}\gl_n]^{\gl_n}\otimes [\Lambda^{i-2n}(\gl_n)]^{\gl_n}, {\mbox{ if }} 2n\leq i\leq n^2+2n,}\\
         {0,  {\mbox{ otherwise. }}}
       \end{array}
\right.
$$
\end{theorem*}
Recall that the algebra of $\gl_n$--invariants in the symmetric algebra $S^{\udot}(\gl_{n})$
is isomorphic to the symmetric algebra
with $n$ generators of degrees $1,2,\ldots, n$.
Also, the algebra of $\gl_n$--invariants in the exterior algebra 
$\Lambda^{\udot}(\gl_n)$ is isomorphic to the exterior algebra 
with $n$ odd generators of degrees $1,3,5,\ldots,2n-1$.
This is explained for example in
\cite{H_Weyl,Fuks::Lie::cohomology}.
In particular, we can identify the 
$2n$th
cohomology $\rH^{2n}(W_n;S^{k}W_n^{*})$ 
of order $2n$
with the subspace of polynomials 
of degree $n+k$ in the polynomial algebra $\kk[x_1,x_2,\ldots,x_n]$ subject to the 
condition that
the generator $x_i$ has degree $i$.
Formulas for the generating cocycles of~\rf{eq::H:Wn::intro} are given in Section~\S\ref{sec::cocycles}.

At the same time we prove another homological conjecture initially formulated in formal geometry 
and in 
foliation theory (see e.g.~\cite{Feigin::flag::foliations}):
we compute the cohomology of the Lie algebra of formal vector fields that preserve a given flag of foliations. 
More precisely, let us fix 
a 
collection of natural numbers $\dn=(n_0,\dots,n_k)$ with $|\dn|=n_0+\ldots+n_k$,
and
fix a sequence of trivial embedded foliations $\{\F_1,\F_2,\dots,\F_k\}$ in $\mathbb{R}^{|\dn|}$
 such that the foliation $\F_{i+1}$ has codimension $n_i$ in $\F_i$ ($n_0$ is 
the 
codimension of $\F_1$  in $\mathbb{R}^{|n|}$.
The Lie algebra $W(n_0,\ldots,n_k)$ consists of formal vector fields that infinitesimally 
preserve all foliations near the origin (see section \ref{sec::Wn::def} for details). That is
$$
W(n_0,\ldots,n_k) := 
\left\{\sum_{i=1}^{|\dn|} f_i\frac{\partial}{\partial x_i} 
\left|
\begin{array}{c}
f_1,\ldots, f_{n_0}\in \R[[x_1,\ldots,x_{|\dn|}]], \\
f_{n_0+1},\ldots, f_{n_0+n_1} \in \R[[x_{n_0+1},\ldots,x_{|\dn|}]], \\
f_{n_0+n_1+1},\ldots, f_{n_0+n_1+n_2} \in \R[[x_{n_0+n_1+1},\ldots,x_{|\dn|}]], \\
\ldots
\end{array}
\right.
\right\}
$$
Consider the maximal reductive subalgebra in the space of linear vector fields that preserve the flag mentioned above.
This subalgebra is isomorphic to the direct sum of matrix algebras $\gl_{n_0}\oplus\ldots\oplus\gl_{n_k}$. 
Let $I_{n_0,\ldots,n_k}$ be the ideal in the symmetric algebra $S^{\pt}(\gl_{n_0}\oplus\ldots\oplus\gl_{n_k})$
generated by the sum $\OP_{r=0}^{k} S^{n_0+\ldots+n_r+1}(\gl_{n_0}\OP\ldots\OP\gl_{n_r})$.
We state the 
description of
 the relative cohomology in terms of the quotient algebra:
\begin{theorem*}[Cor.\ref{thm::relative_har_flag_sloj} below]
The cohomology of the Lie algebra $W(n_0,\ldots,n_k)$ relative to subalgebra $\gl_{n_0}\oplus\ldots\oplus\gl_{n_k}$
with trivial coefficients
are different from zero only in even degrees and coincides with the algebra of $\gl_{n_0}\oplus\ldots\oplus\gl_{n_k}$
invariants in the factor-algebra 
of the symmetric algebra $S^{\pt}(\gl_{n_0}\oplus\ldots\oplus\gl_{n_k})$ by the ideal $I_{n_0,\ldots,n_k}$. 
In particular, for all $i$ we have:
$$\rH^{i}(W(n_0,\ldots,n_k),\gl_{n_0}\oplus\ldots\oplus\gl_{n_k};\kk) =
\left\{
\begin{array}{l}
{ 0, \text{ if } i = 2k+1,} \\
{\left[\frac{S^{k}(\gl_{n_0}\oplus\ldots\oplus\gl_{n_k})}{I_{n_0,\ldots,n_k}}\right]^{\gl_{n_0}\oplus\ldots\oplus\gl_{n_k}}, \text{ if } i =2k.}
\end{array}
\right.
$$
\end{theorem*}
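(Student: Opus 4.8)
The plan is to induct on the length $k$ of the flag, using the semidirect decomposition
$$W(n_0,\ldots,n_k)=W_{n_0}\ltimes\bigl(W(n_1,\ldots,n_k)\otimes\calO_{n_0}\bigr),$$
in which the fields tangent to the leaves of the coarsest foliation, with coefficients allowed to depend on all coordinates, form an ideal $\mathfrak a:=W(n_1,\ldots,n_k)\otimes\calO_{n_0}$ with quotient the transverse algebra $W_{n_0}$. The reductive part $\gl_{n_1}\oplus\ldots\oplus\gl_{n_k}$ lies in $\mathfrak a$, while $\gl_{n_0}$ maps isomorphically onto the reductive part of $W_{n_0}$. The base of the induction, $k=0$, is the classical Gelfand--Fuks computation $\rH^{\bullet}(W_{n_0},\gl_{n_0};\kk)=[S^{\bullet}(\gl_{n_0})/I_{n_0}]^{\gl_{n_0}}$, which is exactly the asserted formula for a one-term flag. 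First I would write down the relative Hochschild--Serre spectral sequence of the extension $\mathfrak a\hookrightarrow W(n_0,\ldots,n_k)\twoheadrightarrow W_{n_0}$,
$$E_2^{p,q}=\rH^{p}\!\left(W_{n_0},\gl_{n_0};\,\rH^{q}\bigl(\mathfrak a,\gl_{n_1}\oplus\ldots\oplus\gl_{n_k};\kk\bigr)\right)\Longrightarrow\rH^{p+q}\bigl(W(n_0,\ldots,n_k),\gl_{n_0}\oplus\ldots\oplus\gl_{n_k};\kk\bigr).$$

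The central step is to identify the fibre cohomology $\rH^{q}(\mathfrak a,\gl_{n_1}\oplus\ldots\oplus\gl_{n_k};\kk)$ as a module over the transverse algebra $W_{n_0}$. Grading $\calO_{n_0}$ by the Euler field, which lies in $\gl_{n_0}$, I expect to prove --- by a Shapiro/K\"unneth-type reduction of the current algebra $\mathfrak a$ together with the inductive hypothesis for $W(n_1,\ldots,n_k)$ --- that this fibre cohomology is a direct sum over $m\ge0$ of copies of the symmetric power $S^{m}W_{n_0}^{*}$ of the coadjoint representation of $W_{n_0}$, each tensored with a graded piece of the (inductively known) flag cohomology of $W(n_1,\ldots,n_k)$. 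To make the induction close I would in fact carry along the stronger statement with symmetric-power coefficients, since it is precisely $\rH^{\bullet}$ of $W(n_1,\ldots,n_k)$ with coefficients in symmetric powers of its coadjoint representation that controls the cohomology of the current algebra $\mathfrak a$.

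Once the fibre is expressed through symmetric powers $S^{m}W_{n_0}^{*}$, Theorem~\ref{thm::sym_coef} finishes the $E_2$-page: in its relative form it gives $\rH^{p}(W_{n_0},\gl_{n_0};S^{m}W_{n_0}^{*})=[S^{n_0+m}\gl_{n_0}]^{\gl_{n_0}}$ concentrated in $p=2n_0$ for $m\ge1$, and the full truncated invariant ring spread over the even degrees $p\le2n_0$ for $m=0$. In every case the base cohomology sits in even $p$; since the fibre cohomology sits in even $q$ as well (again by the inductive, symmetric-coefficient statement), the whole page $E_2^{p,q}$ vanishes for odd $p+q$. As each differential $d_r$ raises total degree by one, it must therefore vanish, so the spectral sequence degenerates at $E_2$ --- this is the degeneration theorem advertised in the abstract, and it yields at once both the vanishing in odd degrees and the additive answer. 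It then remains to match $\bigoplus_{m}[S^{n_0+m}\gl_{n_0}]^{\gl_{n_0}}\otimes(\text{weight-}m\text{ flag cohomology of }W(n_1,\ldots,n_k))$ with $[S^{\bullet}(\gl_{n_0}\oplus\ldots\oplus\gl_{n_k})/I_{n_0,\ldots,n_k}]^{\gl_{n_0}\oplus\ldots\oplus\gl_{n_k}}$: the degree shift $S^{m}\mapsto S^{n_0+m}$ coming from the value $2n_0$ in Theorem~\ref{thm::sym_coef} produces the new ideal generator $S^{n_0+1}(\gl_{n_0})$, while the inductive generators $S^{n_1+\ldots+n_r+1}$ are promoted to $S^{n_0+\ldots+n_r+1}(\gl_{n_0}\oplus\ldots\oplus\gl_{n_r})$ by the same shift. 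Finally I would check that the edge homomorphism of the collapsed sequence is multiplicative, so that the resulting isomorphism is one of graded rings.

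The hard part will be the structural identification of the fibre: showing that the cohomology of the current algebra $W(n_1,\ldots,n_k)\otimes\calO_{n_0}$ is assembled solely from symmetric powers of the coadjoint representation of $W_{n_0}$, with no surviving $W_{n_0}$-isotypic components of other types. This is exactly what reduces the whole computation to Theorem~\ref{thm::sym_coef} and forces the even-degree concentration; once that concentration is established the collapse of the spectral sequence, and hence the vanishing in odd degrees, is automatic. The remaining delicate point is the weight bookkeeping that turns the family of shifts $S^{m}\mapsto S^{n_0+m}$ into precisely the ideal $I_{n_0,\ldots,n_k}$ and confirms compatibility with the ring structure.
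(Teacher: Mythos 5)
Your reduction of the flag algebra to the extension $0\to\mathfrak{a}\to W(n_0,\ldots,n_k)\to W_{n_0}\to 0$ with $\mathfrak{a}=W(n_1,\ldots,n_k)\otimes\calO_{n_0}$ is correct, and induction on $k$ driven by the symmetric-power computation is indeed the alternative route the paper itself sketches for Theorem~\ref{thm::HWm|nnng}. But the step you single out as the hard one --- identifying the fibre cohomology $\rH^{q}(\mathfrak{a},\gl_{n_1}\oplus\ldots\oplus\gl_{n_k};\kk)$ as a direct sum of modules $S^{m}W_{n_0}^{*}$ --- is not merely unproved: it is false, and the parity argument for degeneration built on it collapses. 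Let $E_0=\sum_{i\le n_0}x_i\partial_{x_i}\in\gl_{n_0}$ be the Euler field of the base. It acts on $\mathfrak{a}$ with non-negative integer weights, hence on continuous cochains of $\mathfrak{a}$ with non-positive weights, hence the fibre cohomology is a weight module with only non-positive $E_0$-weights. By contrast, $S^{m}W_{n_0}^{*}$ for $m\ge1$ contains vectors of weight $+m$ (powers of the functionals dual to the constant fields $\partial_{x_i}$), so no summand isomorphic to it can occur; only the trivial summands from your list can appear, and the remaining cohomology consists of modules of a completely different, non-positive-weight type. Concretely, already for $k=1$, $n_0=n_1=1$ the fibre $\mathfrak{a}=W_1\otimes\kk[[t]]$ has a nonzero class in \emph{odd} degree $3$: the $E_0$-weight $(-1)$ subcomplex of the relative cochain complex of $\mathfrak{a}$ is isomorphic, up to a shift by one, to $C^{\udot}(W_1,\gl_1;(W_1\otimes t)^{*})=C^{\udot}(W_1,\gl_1;W_1^{*})$, whose cohomology is $\kk$ concentrated in degree $2$ by Computation~\eqref{eq::H(Wn,SWn)} --- note that this is a symmetric power of the coadjoint representation of the \emph{fibre} algebra $W_{n_1}$, not of the base $W_{n_0}$. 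So $\rH^{3}(\mathfrak{a},\gl_1;\kk)\neq 0$, the fibre cohomology is not even-concentrated, and the $E_2$-page has entries with $p$ and $q$ both odd; in fact one checks that $E_2^{1,3}=\kk$ here and that this odd-odd entry is \emph{needed} to produce the correct two-dimensional $\rH^{4}(W(1,1),\gl_1\oplus\gl_1;\kk)$. Degeneration is therefore in no way automatic, and your bookkeeping that turns the shifts $S^{m}\mapsto S^{n_0+m}$ into the ideal $I_{n_0,\ldots,n_k}$ rests on an $E_2$-page that does not exist.

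The underlying confusion is a reversal of the roles of base and fibre (your own text wavers between the two: one sentence attributes the symmetric powers to $W_{n_0}$, another to $W(n_1,\ldots,n_k)$, and the $E_2$ computation uses the first, wrong, version). Symmetric powers of a coadjoint representation do control the answer, but they are powers of the coadjoint of the algebra in the \emph{leaf} directions, truncated by the dimension of the base; and the paper never computes the cohomology of the current algebra $\mathfrak{a}$ at all. Instead it linearizes the base directions, replacing $W(m,n)$ by $WL(m|n)$, whose relative chain complex is literally the truncated Weyl algebra $\W^{\udot}(W_n,\gl_n)/F^{2m+1}\W^{\udot}(W_n,\gl_n)$ (Theorem~\ref{thm::old::WL(n|0;g)}); the required degenerations are then those of the Hochschild--Serre spectral sequences of the finite-dimensional parabolic subalgebra $\b\hookrightarrow WL(m|n)$ and $\b\hookrightarrow W_{m+n}$, proved via the parabolic BGG resolution (Lemmas~\ref{lm::H_b::coeff} and~\ref{lm::H_gl::to::H_b}), and the flag case follows by specializing Theorem~\ref{thm::HWm|nnng} at $m=0$, $\g=0$. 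Your fibration spectral sequence is not inherently hopeless --- the example above shows its $E_2$-term can assemble to the right answer --- but running it requires the full $W_{n_0}$-module structure of $\rH^{\udot}(W(n_1,\ldots,n_k)\otimes\calO_{n_0},\gl_{n_1}\oplus\ldots\oplus\gl_{n_k};\kk)$, a current-algebra computation that is essentially as hard as the theorem itself and nowhere close to the shape you assume.
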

Moreover, the algebra of $\gl$ invariants has canonical description as a ring of symmetric functions:
$$\left[S^{\pt}(\gl_{n_0}\oplus\ldots\oplus\gl_{n_k})\right]^{\gl_{n_0}\oplus\ldots\oplus\gl_{n_k}} = 
\kk\left[\Psi_{0 1},\dots,\Psi_{0 {n_0}};\dots;\Psi_{k 1},\dots,\Psi_{k {n_k}}\right].$$
Namely, the aforementioned ring is the free commutative algebra generated by the set of generators 
$\{\Psi_{i j} \}$, index $i$ ranges from $0$ to $k$ and $j$ ranges from $1$ to $n_i$.
The degree of the generator $\Psi_{i j}$ coincides with the second index $j$.
The intersection of the ideal $I_{n_0,\ldots,n_k}$ with the subalgebra of invariants is generated by monomials 
$(\Psi_{0 1}^{\alpha_{0 1}}\dots\Psi_{0 n_0}^{\alpha_{0 n_0}})
\dots (\Psi_{r 1}^{\alpha_{r 1}}\dots\Psi_{r n_r}^{\alpha_{r n_r}})$ such that 
 $(\sum_{i=0}^{r}\sum_{j=1}^{n_i}\alpha_{i j}) > n_0+\dots +n_r,$
where the index $r$ ranges from $0$ to $k$ as above.

The answer for the absolute cohomology of the Lie algebra $W(n_0,\ldots,n_k)$
may be complicated for particular choice of integer numbers $(n_0,\ldots,n_k)$,
however there exists a straitforward procedure on how one should complute this absolute cohomology.
We refer these discussions to Section~\ref{sec::W_flag} and
in particular to Theorem~\ref{thm::W_flag_absolute}.
The particular case $\dn=(1,1,\ldots,1)$ where all dimensions are computed in terms of Catalan numbers
is considered in Section~\ref{sec::full_flag_formulas}.

\subsection{Motivations: formal geometry}

In the early 1970s I.\,M.\,Gelfand presented the description of characteristic classes of a manifold of dimension $n$
using the Lie algebra cohomology of the infinite-dimensional Lie algebra of formal vector fields $W_n$.
(See e.g.~\cite{GK} written after presentations at the Gelfands famous seminar, and an overview after ICM in Nice~\cite{G_ICM};
more details can be found in \cite{Fuks::Lie::cohomology}{ch.3},\cite{GK},\cite{BR},\cite{Bott_har_cl}.)

The core construction is called \emph{formal geometry} or \emph{Gelfand-Fuchs cohomology} and looks as follows. 

To 
 any complex smooth manifold $M$ of dimension $n$ we assign the infinite-dimensional manifold $M^{coor}$
of formal coordinates on $M$.
Formally,
a
point of $M^{coor}$ is a pair:
a point $p\in M$ together with a system of formal coordinates in 
a neighbourhood 
of $p$.
In other words, the manifold $M^{coor}$ is the space of $\infty$-jets of submersions $M\rightarrow \C^{n}$
with $0$ as 
target.

The tangent space at each point of $M^{coor}$ is isomorphic to the Lie algebra 
of formal vector fields $W_{n}$. 
The union of these isomorphisms defines a map from the Chevalley--Eilenberg complex of the Lie algebra $W_n$ 
to the De Rham complex of $M^{coor}$.
However, there is a canonical fibration $M^{coor}\rightarrow M$ whose fibers are homotopy equivalent to the Lie group 
$\GL_n(\C)$ and consequently to its compact subgroup $U(n)$ of unitary matrices.
Thus, we get the following morphism of relative complexes:
$$ch: C^{\udot}(W_n,\gl_n;\R) \rightarrow \Omega_{DR}^{\udot}(\frac{M^{coor}}{U(n)}).$$
and the corresponding characteristic map of cohomology rings:
$$
ch: \rH^{\udot}(W_n,\gl_n;\R) \rightarrow \rH^{\udot}_{DR}(\frac{M^{coor}}{U(n)}) = \rH^{\udot}_{DR}(M).
$$
This map 
describes 
characteristic classes of 
the 
tangent bundle 
using the Lie algebra cohomology of infinite-dimensional Lie algebras.
The same construction works for real manifolds.
The main difference is that 
one has to replace $GL_n(\C)$ by $GL_n(\R)$ and the compact subgroup $U(n)\subset GL_n(\C)$ by a compact Lie group $O(n)\subset GL_n(\R)$ of orthogonal matrices
and get a characteristic map:
$$
ch: \rH^{\udot}(W_n,{\mathsf{o}}_n;\R) \rightarrow \rH^{\udot}_{DR}(\frac{M^{coor}}{O(n)}) = \rH^{\udot}_{DR}(M).
$$

 The procedure suggested above depends on the local data.
 One can follow 
the same approach while working with
manifolds 
having
additional geometric data.
For example, 
one considers the manifold $M_{\F}^{coor}$ of
system of $n$ formal coordinates 
in the directions transversal to the leaves of a given foliation $\F$ 
of
a manifold $M$ of codimension $n$.
The tangent space to the infinite-dimensional manifold $M_{\F}^{coor}$
will be isomorphic to the Lie algebra $W_n$. 
This provides 
a 
description of the characteristic classes of foliations generalizing the 
Godbillon-Vey characteristic class.
A similar
construction may be given if one is interested in the characteristic classes of 
$G$-bundles over $M$ 
for
a compact group $G$ (see e.g.~\cite{Khor::Wn_g}).
The cohomological results we obtain below are applied for 
the following constructions of the same kind:
\begin{enumerate}
 \item 
To each foliation with 
prescribed codimension $d$ 
of
an $n$-dimensional manifold
we assign 
an
infinite-dimensional manifold of formal coordinates on the underlying manifold such that 
the first $d$ coordinates should be 
coordinates on a leaf.

In particular, the above situation is interesting even in the case of 
a
fibration $f: S\mapsto M$
where the fiber over each point $p\in M$ is a compact manifold of 
some
given dimension.
(See e.g.\cite{Feigin::Tsygan} for details and \cite{Felder,FFSh,FLSh} for applications to the local Riemann-Roch theorem).

 \item
Consider a flag of foliations.
The assigned infinite-dimensional manifold will consists 
of formal coordinates preserving leaves.
That is, the first $n_1$ coordinates will form coordinates transversal to leaves of the foliation with smallest codimension,
first $n_1+n_2$ coordinates will form coordinates transversal to leaves of the second foliation and so on.
(See \cite{Feigin::flag::foliations} for detailed construction.)

\end{enumerate}
The corresponding infinite-dimensional Lie algebra in the first example is known to coincide
with $W({d,n-d})$. In the second example one has to compute the cohomology of 
the Lie algebra $W(n_1,\ldots,n_k)$, where $(n_1,\ldots,n_k)$ is the collection of codimensions.
Thus, the cohomological computations we made 
lead to 
a
description of characteristic classes 
of
flags of foliations. 

Let us mention a particular application to the theory of foliations.
Let $\Gamma$ be a discrete subgroup of the Lie group $SL_{n+1}(\R)$.
Let $\mathsf{P}$ be a subgroup of $SL_{n+1}(\R)$ that fixes a given line in $\R^{n+1}$.
Then $\mathsf{P}$ acts on $SL_{n+1}(\R)/\Gamma$ by left translations and the orbits of this action defines 
a foliation $\F_{\mathsf{P}}$ of codimension $n$.
The following Corollary is proved in Section~\ref{sec::obstruction}:
\begin{corollary*}[Proposition~\ref{cor::SL::foliation}]
The foliation $\F_{\mathsf{P}}$ on $SL_{n+1}(\R)/\Gamma$ can not be a subfoliation.
That is, there is a cohomological obstruction to the existence of a flag of foliations $\F_1\supset \F_2$ on $SL_{n+1}(\R)/\Gamma$ 
such that the foliation $\F_2$ is concordant to $\F_{\mathsf{P}}$.
\end{corollary*}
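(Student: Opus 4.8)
The plan is to argue by contradiction, playing the characteristic homomorphism of $\F_{\mathsf P}$ against the degree bound that Corollary~\ref{thm::relative_har_flag_sloj} imposes on flag cohomology. I will use three standard facts. First, a codimension-$n$ foliation $\F$ on a closed manifold $M$ carries a characteristic homomorphism $\phi_\F\colon\rH^*(W_n;\R)\to\rH^*(M;\R)$ built from the curvature of a Bott connection on the normal bundle; when that bundle is trivialized it lands in the absolute cohomology $\rH^*(W_n;\R)$. Second, $\phi_\F$ depends only on the concordance class of $\F$. Third, if $\F$ sits in a flag $\F_1\supset\F_2=\F$ with $\F_1$ of codimension $n_0$ and $\F_2$ of codimension $n=n_0+n_1$, then $\phi_{\F_2}$ factors as $\phi_{\F_2}=\phi_{\F_1\supset\F_2}\circ r^*$, where $r^*\colon\rH^*(W_n;\R)\to\rH^*(W(n_0,n_1);\R)$ is restriction along the inclusion $W(n_0,n_1)\hookrightarrow W_n$ of flag-preserving fields.

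Assume $\F_{\mathsf P}$ is concordant to a subfoliation $\F_2$ of some $\F_1\supset\F_2$; since concordance preserves codimension, $\F_2$ has codimension $n$ and $\F_1$ some codimension $n_0$ with $1\le n_0\le n-1$ and $n_1:=n-n_0\ge1$. The first step is a degree count. On one hand $\dim SL(n+1,\R)/\Gamma=(n+1)^2-1=n^2+2n$, which is exactly the top degree of the Gelfand--Fuchs cohomology $\rH^*(W_n;\R)$. On the other hand I would bound the top degree of the flag cohomology: passing from the relative cohomology of Corollary~\ref{thm::relative_har_flag_sloj} to the absolute one through the Hochschild--Serre spectral sequence of the reductive subalgebra $\gl_{n_0}\oplus\gl_{n_1}$ (its degeneration being the mechanism used throughout the paper, recorded in Theorem~\ref{thm::W_flag_absolute}), the absolute flag cohomology is a subquotient of the tensor product of the relative cohomology, concentrated in degrees $\le 2n$, with $\rH^*(\gl_{n_0}\oplus\gl_{n_1})$, whose top degree is $n_0^2+n_1^2$. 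Hence $\rH^*(W(n_0,n_1);\R)$ vanishes above degree $2n+n_0^2+n_1^2$, and $2n+n_0^2+n_1^2<2n+(n_0+n_1)^2=n^2+2n$ because $n_0n_1\ge1$. Consequently $\rH^{n^2+2n}(W(n_0,n_1);\R)=0$, so $r^*$ annihilates every class of degree $n^2+2n$, and the factorization forces $\phi_{\F_2}$ to vanish in degree $n^2+2n$.

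The substantial step is to show that $\F_{\mathsf P}$ realizes a nonzero top characteristic number, i.e. that $\phi_{\F_{\mathsf P}}\colon\rH^{n^2+2n}(W_n;\R)\to\rH^{n^2+2n}(M;\R)$ is nonzero. Here I would exploit homogeneity: $M=SL(n+1,\R)/\Gamma$ is parallelizable, so the normal bundle of $\F_{\mathsf P}$ is trivial and $\phi_{\F_{\mathsf P}}$ indeed targets the absolute cohomology; moreover the normal bundle and a Bott connection can be chosen $SL(n+1,\R)$-invariant, so all Bott--Haefliger characteristic forms are left-invariant and $\phi_{\F_{\mathsf P}}$ factors through the complex of invariant forms on $M$. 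As $\Gamma$ is cocompact, Matsushima's theorem makes invariant forms inject into $\rH^*(M;\R)$, so the nonvanishing reduces to checking, on the level of the pair $(\mathfrak{sl}_{n+1},\mathfrak p)$, that the top characteristic form is a nonzero multiple of the invariant volume form on $\mathfrak{sl}_{n+1}/\mathfrak p$. For $n=1$ this is Roussarie's classical nonvanishing of the Godbillon--Vey number of the horocycle foliation, and higher $n$ proceeds through the same invariant model. Granting this, a class $\omega\in\rH^{n^2+2n}(W_n;\R)$ with $\phi_{\F_{\mathsf P}}(\omega)\ne0$ exists, and concordance invariance yields $0=\phi_{\F_2}(\omega)=\phi_{\F_{\mathsf P}}(\omega)\ne0$, a contradiction; hence no such flag exists.

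The main obstacle is precisely this last nonvanishing: one must verify that the top Gelfand--Fuchs class survives the homogeneous characteristic map, equivalently that the Chern--Simons-type construction applied to the invariant Bott connection produces a nonzero invariant volume form rather than an exact one. Everything else \t the factorization $\phi_{\F_2}=\phi_{\F_1\supset\F_2}\circ r^*$, concordance invariance, and the top-degree vanishing of flag cohomology \t is either standard Bott--Haefliger theory or an immediate consequence of the cohomology computed in Corollary~\ref{thm::relative_har_flag_sloj} and Theorem~\ref{thm::W_flag_absolute}.
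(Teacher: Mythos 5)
Your proposal is correct and takes essentially the same route as the paper: the identical degree count (the absolute flag cohomology $\rH^{\udot}(W(n_0,n_1);\R)$ vanishes above degree $n_0^2+n_1^2+2n<n^2+2n$, which is exactly the paper's Corollary~\ref{cor::non::integr}), combined with the nonvanishing of the top-degree characteristic class of $\F_{\mathsf{P}}$, detected just as in the paper by left-invariant Maurer--Cartan forms on $SL(n+1,\R)/\Gamma$ whose product is a volume form on the compact quotient. The verification you single out as the main obstacle is handled in the paper at the same level of detail (a ``direct check'' that a class in $\rH^{n^2+2n}(W_n;\R)$ maps to $\bigwedge_{(i,j)\neq(n+1,n+1)}\omega_{i,j}$), and your detour through Matsushima's theorem can be replaced by the elementary observation that a volume form on a compact oriented manifold is never exact.
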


\subsection{Outline of the paper}
The paper is organized as follows:

Section~\ref{sec::notation} contains some standard definitions (Subsection~\ref{sec::Lie::H::def}),
a
description of all infinite-dimensional Lie algebras whose cohomology we compute (Subsection~\ref{sec::Wn::def})
and 
a
description of the acyclic Weyl superalgebra (Subsection~\ref{sec::Weyl::super}).

Section~\ref{sec::chains::Weyl::Wn} contains statements of all the main results of this paper.
We start from Subsection~\ref{sec::Wn::known} where we recall a classical 
result by Gelfand and Fuchs and its generalization due to the author.
In Subsection~\ref{sec::main::res} we formulate the main Theorems~\ref{thm::homol::WLmn} and~\ref{thm::HWm|nnng}
on relative cohomology of Lie algebras $WL(m|n)$ and $W(n_1,\ldots,n_k)$. In Subsection~\ref{sec::W_flag}
we describe absolute cohomologies 
in terms of 
relative ones.
A particular cohomological obstruction to the existence of 
a
flag of foliations is given in Subsection~\ref{sec::obstruction}.

Section~\ref{sec::main::methods} contains 
a
proof of the main Theorem~\ref{thm::homol::WLmn}.
We state several homological results on 
the
relative homology of 
a
parabolic subalgebra in Subsection~\ref{sec::parabolic::gl},
however, the proofs of these results are postponed to Appendix~\ref{sec::parabolic}.
In Subsection~\ref{sec::degen::Hoch::Serre} we use these results in order to describe the Hochschild-Serre spectral sequences 
associated with
embeddings of a parabolic subalgebra
into appropriate infinite-dimensional Lie 
subalgebras of the Lie algebra $W_n$
of formal vector fields.
In Subsection~\ref{sec::proof::HWLmn} we show how  
degenerations of 
the
aforementioned Hochschild-Serre spectral sequences
imply 
the
main Theorem~\ref{thm::homol::WLmn}.

We illustrate the combinatorics of our main result in two particular cases in Section~\ref{sec::examples}.
Namely, we give 
a
description of the representing cocycles in the absolute case $W(1,\ldots,1)$ 
in Subsection~\ref{sec::full_flag_formulas}.
and  for the Lie algebra cohomology $\rH^{\udot}(W_1;S^{m}W_1^{*})$
in Subsection~\ref{sec::cocycles::W1}. 
Description
of representing cocycles for arbitrary $n$ requires more definitions,
and, therefore, this
is presented in Appendix~\ref{sec::cocycles}. 

Appendix~\ref{sec::chains::gl::inv} deals with a careful description of the relative chains
and cocycles for 
the
infinite-dimensional Lie algebras under consideration. 

Apendix~\ref{sec::parabolic} contains general results on cohomology of parabolic Lie subalgebras 
based on the existence of BGG resolution. The particular case of upper block-triangular matrices leads 
to Lemmas~\ref{lm::H_b::coeff} and~\ref{lm::H_gl::to::H_b} which was used in the proof of 
the main Theorem~\ref{thm::homol::WLmn}.

\subsection{Acknowledgments}
I am grateful to V.~Dotsenko, B.\,Shoikhet, D.\,B.\,Fuchs  
for useful discussions.
Special thanks are addressed to Boris Shoikhet for his explanations of the importance of this problem and
to my advisor Boris Feigin for many useful and stimulating discussions.
My conversations with him about this problem took place over several years.

\section{Notations and recollections}
\label{sec::notation}

\subsection{Lie algebras of formal vector fields and their subalgebras}
\label{sec::Wn::def}
The main infinite-dimensional Lie algebra in this article is 
the
Lie algebra of formal vector fields on the $n$-dimensional plane.
We use the standard notation $W_n$ 
for this Lie algebra
following textbook~\cite{Fuks::Lie::cohomology}.

Let us fix coordinates $x_1,\ldots,x_n$ on the $n$-dimensional plane $\kk^{n}$.
The ring of $\infty$-jets at the origin $0\in\kk^{n}$ coincides 
with the ring of formal power series $\kk[[x_1,\ldots,x_n]]$ 
in
$n$ variables.
We will also use shorter notation $\calO_n$ for the same ring.
The infinite-dimensional Lie algebra of derivations of this ring is called the Lie algebra 
of formal vector fields on $\kk^n$ and is denoted by $W_n$.  
Any vector field $\nu\in W_n$ admits a presentation in coordinates $\nu=\sum_{i=1}^n \nu_i \dx{i}$, where $\nu_i\in \calO_n$.
The subspace of vector fields with linear coefficients $\left\{\sum_{i,j}c_{ij} x_i\dx{j}\right\}$ forms a Lie subalgebra
isomorphic to the Lie algebra $\gl_n=\gl_n(\kk)$ of $n\times n$-matrices. 

\subsubsection{Extension by $\g$-valued functions}
\label{sec::Wng}
To any Lie algebra $\g$ with 
commutator $[\ ,\ ]_\g$
we can assign a new infinite-dimensional Lie algebra
$\Wng$ (denoted later by $W(n;\g)$). 
This algebra is a semi-direct product of the Lie algebra of formal vector fields
and the Lie algebra of $\g$-valued formal power series on $\kk^{n}$.
In particular, the commutator in this algebra looks as follows:
\begin{equation}\label{eq::commutator}
[\eta_1+g_1\otimes p_1,\eta_2+g_2\otimes p_2]
=[\eta_1,\eta_2] + [g_1,g_2]_{\g} \otimes p_1 p_2 +
g_2\otimes \eta_1(p_2) - g_1\otimes \eta_2(p_1),
\end{equation}
where $\eta_i \in W_n$, $g_i \in \g$, $p_i \in \calO_n$ with $i\in \{ 1,2 \}$.
This algebra was used in~\cite{Khor::Wn_g} in order to define  
the characteristic classes of $G$-bundles (whenever $G$ is a compact group) 
using the constructions of formal geometry.

\subsubsection{Vector fields preserving foliation structures}
Let us substitute in the example from the previous Section~\ref{sec::Wng} 
instead of $\g$ the infinite-dimensional Lie algebra $W_m$ 
which does not admit a Lie group at all.
However, the geometrical meaning of the Lie algebra $\Wnm$ is clear.
Indeed, the Lie algebra $\Wnm$ is a subalgebra of the Lie algebra $W_{n+m}$ 
consisting of 
formal vector fields 
on $\kk^{n+m}$
 that preserve the trivial foliation $\kk^{n}\times \kk^{m}$, 
where
for any $p\in\kk^{m}$ the product $\kk^{n}\times \{p\}$ is considered to be a leaf of this foliation.
In other words, a vector field $\nu=\sum_{i=1}^{n+m} \nu_i\dx{i}$ belongs to $\Wnm$ whenever
for all $i$ from $1$ to $n$ the formal power series $\nu_i$ does not depend on $x_{n+1},\ldots,x_{n+m}$.

Let us also recall the generalization of this Lie algebra to the case of a flag of foliations.
Namely, let us fix a collection of integers $\dn=(n_0,n_1,\dots,n_k)$
and consider a collection of trivial embedded foliations 
$\{\F_1\supset\ldots\supset \F_k\}$ with prescribed codimensions.
Namely, $\F_i:= \kk^{n_0+\ldots+n_{i-1}}\times\kk^{n_i+\ldots+n_k}$
and each leaf of $\F_i$ is equal to $p\times\kk^{n_i+\ldots+n_{k}}$ for 
some point $p\in \kk^{n_0+\ldots+n_{i-1}}$.
The Lie algebra $W({n_0,n_1,\ldots,n_k})$ is a Lie subalgebra of $W_{n_0+n_1+\ldots+n_k}$
consisting of those vector fields that preserve the aforementioned flag of foliations.
In other words, a vector field $\nu=\sum_{i=1}^{n+m} \nu_i\dx{i}$ belongs to 
$W({n_0,n_1,\ldots,n_k})$ if and only if for all $i$ the formal power series 
$\nu_i$ depends only on variables $x_1,\ldots,x_{n_0+\ldots+n_r}$ where $r$ is the integer defined by the inequality
$n_0+\ldots+n_{r-1}< i < n_0+\ldots +n_{r}$.
The Lie algebra $W({n_0,n_1,\ldots,n_k})$ contains a direct sum of matrix subalgebras $\gl_{n_r}$ where  $r$ ranges from $0$ to $k$. 
Each $\gl_{n_r}$ is generated by the fields $x_i\dx{j}$ where $i,j$ satisfy 
 the inequality $n_0+\ldots+n_{r-1}< i,j < n_0+\ldots +n_{r}$.

\subsubsection{Vector fields that are linear in the normal direction to the leaves of a foliation}
\label{sec::Wn::def::WL}
We also want to give special notations for another class of subalgebras
of vector fields. These Lie algebras are very useful for our homological computations:\\
Define $WL({n|m})$ 
to be the 
subalgebra of $W({n,m})$ consisting of those 
fields $\nu=\sum_{i=1}^{n+m} \nu_i\dx{i}$ 
for which 
the power series $\nu_i$ is a polynomial of degree no more than $1$ 
 for $i=1,\dots,n$.
That is the intersection of $WL({n|m})$ with $W_n$ is a subspace of $W_n$ spanned by linear and constant vector fields.
Similarly, one can define the Lie algebra $WL({n_0,\ldots,n_{k}|n_{k+1},\ldots,n_{k+l})}$ by the same property.
We say that a vector field $\nu=\sum_{i=1}^{n_0+\ldots+n_k} \nu_i\dx{i}$ from $W({n_0,\ldots,n_{k+l}})$ belongs to 
$WL({n_0,\ldots,n_{k}|n_{k+1},\ldots,n_{k+l}})$ if all $\nu_i$ are linear or constant for all integer $i$ from the interval 
$[1,n_0+\ldots+n_{k}]$.

Analogously to 
the extension in
Subsection~\ref{sec::Wng},
 we consider 
the extension of the Lie algebra of 
flag-preserving vector fields 
by the Lie algebra of $\g$-valued functions.
Let $W(n_1,\ldots,n_k;\g)$ be the semi-direct product $W(n_1,\ldots,n_k)\ltimes\g\otimes \calO_{n_1+\ldots+n_k}$
with 
commutator defined analogously to~\eqref{eq::commutator},
while
the Lie algebra
 $WL(n_1,\ldots,n_k|n_{k+1},\ldots,n_{k+l};\g)$ is its subalgebra whose intersection with $W_{n_1+\ldots+n_k}$
consists of linear or constant vector fields.

\subsection{Lie algebra cohomology}
\label{sec::Lie::H::def}
The cohomology of a Lie algebra $\g$ with coefficients in a module $M$ is the derived homomorphism from a trivial module $\kk$ to $M$:
$$
H^{\udot}(\g;M):= Ext^{\udot}_{U(\g)}(\kk,M).
$$ 
However, the Koszul resolution $\Lambda^{\udot}(\g)\otimes U(\g) \rightarrow \kk$ produces a small complex that counts the derived homomorphisms.
This complex is called the Chevalley--Eilenberg complex $C^{\udot}(\g;M)$.
The CE complex is isomorphic to the tensor product $\Lambda^{\udot}(\g[1])^{*}\otimes M$ of a free graded commutative algebra generated by
the dual vector space $\g^{*}$ shifted by $1$ and the module $M$.
The differential is extended by the Leibniz rule 
from the maps $\g^{*}\rightarrow \Lambda^{2}\g^{*}$ and $M\rightarrow \g^{*}\otimes M$, dual to the commutator map
$[\ ,\ ]:\Lambda^2\g\rightarrow\g$ and the map $\g\otimes M \rightarrow M$ which defines the action.
For the trivial module $M=\kk$ the Chevalley--Eilenberg complex is a dg-algebra.
A review of the definition of homology of Lie algebras in terms 
of derived functors may be found in textbooks on homological algebra
(see e.g.~\cite{Weibel} {ch.7}).
We also refer the reader to the monograph \cite{Fuks::Lie::cohomology}
for detailed definitions via Chevalley-Eilenberg complex.
The same monograph also contains 
a
description of 
the
most frequently used methods of computation
and the particular computation for infinite-dimensional Lie algebras.

In this article we are mostly interested in the Lie algebra cohomology of infinite-dimensional Lie algebras.
There are several ways 
to avoid 
problems related to the finiteness conditions.
One way is to consider a grading on a Lie algebra such that the graded components are finite-dimensional.
Then all considered modules should be graded and the duals should 
also
be
graded duals. 
Another possibility is to consider the topology on a Lie algebra.
The Chevalley-Eilenberg complex $C^{\udot}(\g;L)$  of a topological Lie algebra $\g$
consists of continuous skew-symmetric maps from 
several copies of 
$\g$ to a topological module $L$.

The main computational method we use below is the Hochschild--Serre spectral sequences.
Let us briefly recall the origin of this spectral sequence.
Consider an embedding of Lie algebras $i:\h\hookrightarrow \g$ and a decomposition 
$\g\cong \h\oplus\g/\h$ as $\h$-modules. 
Then there exists a canonical filtration on the Chevalley-Eilenberg complex of the Lie algebra $\g$
associated with this embedding $i$:
\begin{equation}
\label{eq::Ser:Hoch:filtr}
C^{\udot}(\g;L) =Hom(\Lambda^{\udot}\g;L) \supset Hom(\Lambda^{\udot}\g\otimes\Lambda^{1}\g/\h; L) \supset \ldots
\supset Hom(\Lambda^{\udot}\g\otimes\Lambda^{k}\g/\h;L)\supset \ldots
\end{equation}
The spectral sequence associated with this filtration is called the Hochschild-Serre spectral sequence and has the following
first term $E_1^{p,q} = \rH^{q}(\h; Hom(\Lambda^{p}(\g/\h);L))$.

We do all homological computations over 
an arbitrary
field $\kk$ of zero characteristic and 
these computations 
remain 
true over the field of rational numbers $\mathbb{Q}$. 
However, all reasonable applications 
deal
with the case of complex or real coefficients
while working with formal geometry
for complex or real smooth manifolds respectively.

\subsection{Weyl superalgebra}
\label{sec::Weyl::super}
In this section we recall a standard construction from homological algebra.
The Weyl superalgebra described below is acyclic, but 
it
has a standard filtration
such that the corresponding spectral sequence coincides in some cases with the one coming from the universal bundle
over the classifying space. 
It is possible to define characteristic classes of fibrations using a filtered map from a Weyl superalgebra.
 The details of this construction may be found, for example, 
in~\cite{Fuks::Lie::cohomology,Khor::Wn_g}.

Let $\g$ be a Lie algebra. 
Consider a $\Z$-graded Lie superalgebra that is a semi-direct product of the Lie algebra $\g$ (placed in degree $0$) 
and its adjoint representation (placed in degree $-1$). Define a differential on this semi-direct product which maps identically one copy of $\g[1]$
 to another copy of $\g$. We denote this algebra by $\gtg$.
\begin{equation}
\label{eq::g_to_g}
\gtg :=
\begin{array}{ccccccccccccl}
 \ldots & \rightarrow & 0 & \rightarrow & \g & \stackrel{Id}{\rightarrow} & \g & \rightarrow & 0 & \rightarrow & \ldots 
& \text{ the complex itself }
\\
\ldots & & -2 & & -1 & & 0 & & 1 & & \ldots  & \text{ homological degrees} 
\end{array}
\end{equation}
The Chevalley-Eilenberg complex $C^{\udot}(\gtg)$ of this Lie super-algebra is called 
{\it the Weyl (super)algebra of $\g$} and is denoted by $\W^{\udot}(\g)$.
The Weyl algebra $\W^{\udot}(\g)$ is acyclic since the complex \rf{eq::g_to_g} has 
vanishing
cohomology. 
However, this complex $\W^{\udot}(\g)$ is a filtered complex and the corresponding spectral sequence is of 
particular interest.
Indeed, consider the Hochschild-Serre filtration associated with the embedding of the original Lie algebra $\g$ into $\gtg$.
We call this filtration {\it standard} and denote it by $F^{\udot}\W^{\udot}(\g)$.
In particular, we have the following isomorphisms for the associated graded terms:
$$
F^{2k}\W^{\udot}(\g) /F^{2k+1}\W^{\udot}(\g) = C^{\udot}(\g;S^{k}\g^{*})[-2k]
\quad \text{ and } \quad F^{2k-1}\W^{\udot}(\g) /F^{2k}\W^{\udot}(\g) = 0
$$
That is for even numbers the associated graded complex coincides with the Chevalley-Eilenberg complex of the original
Lie algebra $\g$ with coefficients in the symmetric power of the coadjoint representation.
The associated graded complexes for odd numbers of 
the
filtration are empty.

\begin{remark}
Suppose that $\g$ is a semi-simple Lie algebra associated with a compact Lie group $G$. 
The spectral sequence associated with the standard filtration on the Weyl algebra $\W^{\udot}(\g)$ is 
a stabdard example of \emph{transgression}
and coincides with the Leray-Serre spectral sequence associated with the universal fibration:
$EG\stackrel{G}{\rightarrow} BG$.
Here $EG$ is a contractible space endowed with the free action of the Lie group $G$ and the quotient space $BG$ is called the classifying space.
In particular, the cohomology of the classifying space $BG$ differs from zero only in even degrees and the even cohomology $\rH^{2n}(BG)$
coincides with the space of invariants $[S^{n}\g^{*}]^{\g}$ of the {$n$th} symmetric power of the coadjoint representation. 
\end{remark}

Similarly to the Chevalley-Eilenberg complex the Weyl algebra defines a contravariant functor from the category of Lie algebras
to the category of filtered dg-algebras. In particular, any morphism $\varphi:\h\rightarrow \g$ of Lie algebras defines
a morphism of corresponding Weyl superalgebras $\W(\varphi):\W^{\udot}(\g)\rightarrow \W^{\udot}(\h)$ 
compatible with standard filtrations.
Moreover, whenever $\varphi$ is an embedding and there exists an inverse map of $\h$-modules $\psi:\g\rightarrow \h$ 
(that gives an isomorphism
of $\h$-modules $\g\simeq \h\oplus \g/\h$), there exists 
a canonical map of filtred complexes  
\begin{equation}
\label{eq::weyl::map}
\psi^*:\W^{\udot}(\h)\rightarrow C^{\udot}(\g;\kk).
\end{equation}
Where the filtration on 
the
Weyl superalgebra is standard and the filtration on 
the
Chevalley-Eilenberg complex $C^{\udot}(\g;\kk)$
is the Hochschild-Serre filtration~\eqref{eq::Ser:Hoch:filtr} associated with the embedding $\h\hookrightarrow\g$.

\subsubsection{Relative case}

Suppose $\h$ is a Lie subalgebra of $\g$.
There is a notion of relative Lie algebra cohomology of the Lie algebra $\g$ relative to $\h$ and with coefficients in a $\g$-module $M$.
This cohomology theory can be described as a certain derived functor from the category of $\g$-modules with mild assumptions on the $\h$-action.
For example if $\h$ is semi-simple one has to consider derived homomorphisms in the category of $\g$-modules with $\h$ acting locally finite.
We refer for details on abstract definition to e.g.~\cite{Weibel}. 
Nevertheless, we define the relative Chevalley-Eilenberg complex $C^{\udot}(\g,\h;M)$ which computes the relative cohomology.
This complex consists of those chains $c\in C^{\udot}(\g;M)=\Hom(\Lambda^{\udot}\g,M)$
such that $c(h,\ldots)= 0= d_{CE}(c)(h,\ldots)$ whenever $h\in\h$.
In particular, if $h$ is semisimple then $C^{\udot}(\g,\h;M)$ is isomorphic to the subcomplex of $\h$-invariants
 $\Hom_{\h}(\Lambda^{\udot}(\g/\h),M)$.

We can apply the same procedure for the Weyl superalgebra.
Indeed, starting from a Lie subalgebra $\h\hookrightarrow \g$ we define {\it the relative Weyl superalgebra $\W^{\udot}(\g,\h)$} 
to be the relative Chevalley-Eilenberg complex of the Lie superalgebra $\gtg$ relatively to the Lie subalgebra
$(0\to\h)$.
In particular, the components of the associated graded complex with respect to the standard filtration looks as follows:
\begin{equation}
\label{eq::Weyl::assoc::grad}
F^{2k}\W^{\udot}(\g,\h) /F^{2k+1}\W^{\udot}(\g,\h) = C^{\udot}(\g,\h;S^{k}\g^{*})[-2k] \quad
\text{ and } \quad F^{2k-1}\W^{\udot}(\g,\h) /F^{2k}\W^{\udot}(\g,\h) = 0
\end{equation}
Note that 
the
relative Weyl dg-algebra is no 
longer
acyclic.
 However, if $\h$ is semi-simple and admits a compact group $H$
 then the cohomology of the relative Weyl algebra $\W^{\udot}(\g,\h)$ is equal to the 
cohomology of the classifying space $BH \cong EH/H \cong\{point\}/H$. 
That is the total cohomology $\rH^{\udot}(\W^{\udot}(\g,\h))$ 
vanishes in odd degrees and is is isomorphic
to the ring of invariants $[S^{\udot}\h]^{\h}$ in even degrees.

\section{Relative chains and truncated Weyl superalgebras}
\label{sec::chains::Weyl::Wn}
In this section  
we begin by recalling
the classical results from~\cite{GF} concerning the description of the relative cohomology ring
of the Lie algebra $W_n$ modulo 
the
subalgebra of linear vector fields $\gl_n$ and 
similar results obtained by the author in~\cite{Khor::Wn_g} 
on 
the
cohomology ring of the Lie algebra $\Wng$.
Then we consider a particular case of the Lie algebras $W(m,n)$ and state similar results for the Lie algebra $WL(m|n)$.
Finally we state the main general results which we prove in the next Section~\ref{sec::main::methods}.
All results in this section either consist of careful 
descriptions
 of relative chains
based on hunting of $\gl$-invariants or are implications of theorems from the 
following section.
In either case
we will give direct links 
to Appendix~\ref{sec::chains::gl::inv} or 
to Section~\ref{sec::main::methods}.

\subsection{known results}
\label{sec::Wn::known}
Recall one of the first cohomological 
computations
in formal geometry due to Gelfand and Fuchs:
\begin{theorem}{(\cite{GF})}
\label{thm::HWn}
The space of relative cochains 
of formal vector fields $W_n$ with constant coefficients
is isomorphic to the truncated cohomology of the classifying space $BU_n$:
$$
\frac{\rH^{\udot}({BU_n})}{\rH^{>2n}(BU_n)} = 
\frac{[S^{\udot}(\gl_n)]^{\gl_n} }{[S^{>n}(\gl_n)]^{\gl_n} }
= 
\frac{\W^{\udot}(\gl_n,\gl_n) } {F^{2n+1}\W^{\udot}(\gl_n,\gl_n)}
\stackrel{\cong}{\longrightarrow} C^{\udot}(W_n,\gl_n;\kk)
$$
In particular, there are no relative cochains of odd degree.
\end{theorem}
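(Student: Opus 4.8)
The chain of three identifications has three rather different natures, and I would dispatch them in turn. The first equality is Borel's theorem together with Chern--Weil theory: the cohomology ring of the classifying space is the ring of invariant polynomials $[S^{\udot}(\gl_n)]^{\gl_n}$, with symmetric degree $k$ placed in cohomological degree $2k$, and cutting off above cohomological degree $2n$ corresponds to killing $[S^{>n}(\gl_n)]^{\gl_n}$. The second equality is immediate from the description of the relative Weyl superalgebra in~\eqref{eq::Weyl::assoc::grad}: for $\g=\h=\gl_n$ the relative complex $C^{\udot}(\gl_n,\gl_n;S^k\gl_n^{*})$ is concentrated in degree $0$ and equals $[S^k\gl_n^{*}]^{\gl_n}$, so $\W^{\udot}(\gl_n,\gl_n)$ has vanishing differential and coincides as a graded algebra with $[S^{\udot}(\gl_n)]^{\gl_n}$, the standard filtration becoming the symmetric-degree filtration; its quotient by $F^{2n+1}$ is then exactly $[S^{\le n}\gl_n^{*}]^{\gl_n}$. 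Thus the whole content is carried by the last arrow, which is the classical theorem of Gelfand and Fuchs.

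For that arrow I would first \emph{construct the comparison map}. The grading $W_n=\bigoplus_{d\ge-1}(W_n)_d$ by polynomial degree realizes $\gl_n=(W_n)_0$ as a direct summand, so the projection $\pi\colon W_n\twoheadrightarrow\gl_n$ onto the linear part is a $\gl_n$-module retraction of the inclusion. By the construction recalled at the end of Section~\ref{sec::Weyl::super}, this retraction produces a canonical map of filtered complexes $\Phi\colon\W^{\udot}(\gl_n,\gl_n)\to C^{\udot}(W_n,\gl_n;\kk)$, compatible with the standard filtration on the source and the Hochschild--Serre filtration~\eqref{eq::Ser:Hoch:filtr} on the target.

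Next I would \emph{bound the cochains by a weight argument}. The grading element $E=\sum_i x_i\dx{i}\in\gl_n$ acts on $(W_n)_d$ with weight $d$, so inside $W_n/\gl_n$ the only negative weights come from the $n$-dimensional space $(W_n)_{-1}$ of constant fields, each of weight $-1$, while every other summand has weight $\ge1$. A nonzero $\gl_n$-invariant element of $\Lambda^{p}(W_n/\gl_n)^{*}$ must have total weight $0$; if it uses $a$ constant slots and $b$ positive slots then $a\le n$ and the positive weights sum to $a$, forcing $b\le a\le n$ and hence $p=a+b\le2n$. Therefore the relative cochains vanish in degrees $>2n$, the map $\Phi$ kills $F^{2n+1}$, and it descends to $\bar\Phi\colon\W^{\udot}(\gl_n,\gl_n)/F^{2n+1}\to C^{\udot}(W_n,\gl_n;\kk)$.

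Finally I would prove that $\bar\Phi$ is a bijection. Since the source has zero differential, once $\bar\Phi$ is surjective the Chevalley--Eilenberg differential on the target is automatically zero (the image of a cocycle is a cocycle), so it suffices to check that $\bar\Phi$ is an isomorphism of graded vector spaces. Comparing graded dimensions and passing to associated graded pieces reduces this to the single invariant-theoretic identity
$$\left[\Lambda^{\udot}(W_n/\gl_n)^{*}\right]^{\gl_n}\;\cong\;\left[S^{\le n}\gl_n^{*}\right]^{\gl_n},$$
which matches an invariant polynomial of symmetric degree $k\le n$ with the $\gl_n$-invariant exterior form that $\Phi$ assigns to it in degree $2k$ (in particular it asserts that the odd-degree invariants vanish). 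I expect this identification to be the main obstacle: it is precisely the classical Gelfand--Fuchs computation of $\gl_n$-invariants among exterior forms on formal vector fields, the simpler forerunner of the Gelfand--Feigin--Fuchs invariant computation that the present paper is designed to bypass in the case of nontrivial symmetric-power coefficients. Granting this identity, $\bar\Phi$ is an isomorphism on associated graded, hence an isomorphism of filtered complexes, which closes the chain.
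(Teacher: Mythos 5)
Your handling of the first two equalities is correct (they are indeed essentially definitional: the relative Weyl algebra $\W^{\udot}(\gl_n,\gl_n)$ is the invariant ring $[S^{\udot}\gl_n^{*}]^{\gl_n}$ with zero differential and doubled degrees), your comparison map is constructed correctly from the $\gl_n$-equivariant retraction $W_n\twoheadrightarrow\gl_n$ via the functoriality recalled in Section~\ref{sec::Weyl::super}, and your Euler-weight argument for $C^{>2n}(W_n,\gl_n;\kk)=0$ is sound. That weight argument is even a genuinely cleaner route to the truncation bound than the one in the paper, where the bound emerges only as a by-product of a full classification of invariants.

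However, there is a genuine gap, and it sits exactly where you flagged it: the identity $\left[\Lambda^{\udot}(W_n/\gl_n)^{*}\right]^{\gl_n}\cong\left[S^{\le n}\gl_n^{*}\right]^{\gl_n}$, realized by $\bar\Phi$, is never proven --- it is ``granted''. Since the first two equalities carry no content, this identity \emph{is} Theorem~\ref{thm::HWn}; invoking ``the classical Gelfand--Fuchs computation'' to justify it is circular, because the theorem you were asked to prove is that computation. The paper does not leave this step implicit: Appendix~\ref{sec::chains::Wn::graphs} carries it out. One identifies $C^{\udot}(W_n,\gl_n;\kk)$ with the $\gl(V)$-invariants of $\Lambda^{\udot}\bigl(\ooplus_{k=0,2,3,\ldots}S^{k}V\otimes V^{*}\bigr)$ with $V=\kk^{n}$; by Weyl's first fundamental theorem all such invariants are spanned by complete contractions, encoded by oriented graphs with out-valence $1$ and in-valence $\neq 1$; the alternation over vertices of equal valence kills every graph in which two in-valence-zero vertices point at a common target, which forces each connected component to be a wheel $\Gamma_{r}$; the surviving invariants are the products $c_{\Gamma_{r_1}}\cdots c_{\Gamma_{r_k}}$ with $r_1+\cdots+r_k\le n$, and a separate linear-independence check (equivalently, injectivity of your $\bar\Phi$, which dimension counting alone does not give) finishes the proof via $\Psi_{2r}\mapsto c_{\Gamma_{r}}$. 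Without this block --- the first fundamental theorem, the combinatorial reduction to wheels, and the independence check --- your proposal establishes only that $\bar\Phi$ is a well-defined filtered chain map whose target is concentrated in degrees $\le 2n$, not that it is an isomorphism.
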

This computation is based on the description of relative chains as $\gl_n$-invariants.
We refer to the original paper~\cite{GF} and to 
Appendix~\ref{sec::chains::Wn::graphs} for details.
We draw the arrow from the truncated Weyl superagebra following the existence of map~\rf{eq::weyl::map}.
A
 similar computation with $\gl_n$-invariants was done by the author in~\cite{Khor::Wn_g} and leads to the following result:
\begin{theorem}{(\cite{Khor::Wn_g})}
\label{thm::old::WL(0|n;g)}
The ring of relative chains of the Lie algebra of formal vector fields extended by $\g$-valued functions 
coincides with the quotient of the relative Weyl superalgebra $\W^{\udot}(\gl_n\oplus \g,\gl_n)$ modulo 
the 
$2n+1$
part of the standard filtration:
$$
 \W^{\udot}(\gl_n\oplus \g,\gl_n)/F^{2n+1}\W^{\udot}(\gl_n\oplus \g,\gl_n)
\stackrel{\cong}{\longrightarrow} C^{\udot}(\Wng,\gl_n;\kk).
$$
\end{theorem}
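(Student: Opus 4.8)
The plan is to obtain the asserted isomorphism as the canonical Weyl map attached to a suitable retraction, and then to prove bijectivity by the same invariant-theoretic bookkeeping that underlies Theorem~\ref{thm::HWn}. First I would realize $\gl_n\oplus\g$ as a subalgebra of $\Wng$: take $\gl_n$ to be the linear vector fields and $\g$ the copy $\g\otimes1$ of constant $\g$-valued functions. Formula~\eqref{eq::commutator} gives $[\gl_n,\g\otimes1]=0$ (a linear field annihilates constants), so $\gl_n\oplus\g$ is a subalgebra and $\gl_n$ acts trivially on the summand $\g$. Grading $\Wng$ by weight (a field with coefficients of polynomial degree $d$ has weight $d-1$, and $g\otimes p$ has weight $\deg p$), the weight-zero component is exactly $\gl_n\oplus\g$, and projection onto it is a retraction $r\colon\Wng\to\gl_n\oplus\g$ which is a morphism of $(\gl_n\oplus\g)$-modules. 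By the functoriality-plus-acyclicity principle recalled in \S\ref{sec::Weyl::super}, such an $r$ determines a canonical morphism of filtered complexes $\Phi\colon\W^{\udot}(\gl_n\oplus\g,\gl_n)\to C^{\udot}(\Wng,\gl_n;\kk)$, sending the standard filtration to the Hochschild--Serre filtration of $\gl_n\oplus\g\hookrightarrow\Wng$; equivariance of $r$ under $\gl_n$ guarantees that it descends to the relative (basic) subcomplexes.

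Next I would make both sides explicit. Since $\gl_n$ acts trivially on $\g$, the Künneth isomorphism for a direct sum of differential graded Lie algebras gives $\W^{\udot}(\gl_n\oplus\g,\gl_n)\cong[S^{\udot}\gl_n^{*}]^{\gl_n}\otimes\W^{\udot}(\g)$, where the first tensor factor is the differential-free even algebra $\W^{\udot}(\gl_n,\gl_n)$ read off from~\eqref{eq::Weyl::assoc::grad}. On the other side, with $V=\kk^{n}$ one has $\Wng/\gl_n=\bigl(V\oplus\bigoplus_{d\ge2}V\otimes S^{d}V^{*}\bigr)\oplus\bigl((\g\otimes1)\oplus\bigoplus_{d\ge1}\g\otimes S^{d}V^{*}\bigr)$, and the relative cochains are the $\gl_n$-invariants of the exterior algebra on its dual, all of whose $\gl_n$-dependence passes through the factors $V$ and $V^{*}$. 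The first fundamental theorem for $GL(V)$ then presents these invariants as linear combinations of complete contractions, i.e.\ products of cyclic traces (this is the description of relative chains used in Appendix~\ref{sec::chains::gl::inv}). Under $\Phi$ the trace monomials built from $[S^{\udot}\gl_n^{*}]^{\gl_n}$ go to the Gelfand--Fuchs cocycles of $C^{\udot}(W_n,\gl_n;\kk)$, which is exactly Theorem~\ref{thm::HWn} applied to the $\g$-free factor; the degree-one generators $\g^{*}$ of $\W^{\udot}(\g)$ map to the cochains dual to the weight-zero directions $\g\otimes1$; and the curvature generators $\g^{*}\subset S^{\udot}\g^{*}$ map, through $r$, to the $2$-cochain $(\xi,\eta)\mapsto r([\xi,\eta])$. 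A direct check from~\eqref{eq::commutator} that $r$ intertwines the Weyl differential with the Chevalley--Eilenberg differential shows that $\Phi$ is a morphism of differential graded algebras.

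It then remains to see that $\Phi$ becomes an isomorphism after dividing the source by $F^{2n+1}$. The key observation is a weight count: a weight-zero bracket $[\xi,\eta]$ with $\xi,\eta$ of nonzero weight must pair a weight $(-1)$ field, i.e.\ a constant one from $V$, with a weight $(+1)$ field; hence each curvature $2$-cochain in the image of $\Phi$ --- whether coming from the $\gl_n$-factor or from the $\g$-factor --- consumes exactly one of the $n$ constant directions. Because these legs are antisymmetrized in the exterior algebra, a product of curvature generators of total symmetric degree $k$ vanishes as a cochain once $k>\dim V=n$, since $\Lambda^{>n}V^{*}=0$. This is precisely the second fundamental theorem for $GL(V)$ in the guise of the dimension bound, and it annihilates exactly the part of $[S^{\udot}\gl_n^{*}]^{\gl_n}\otimes\W^{\udot}(\g)$ lying in $F^{2n+1}$. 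Surjectivity onto the remaining cochains is the statement that every complete contraction factors through the chosen generators (again the first fundamental theorem), and counting the surviving contractions against the monomial basis of the truncated Weyl algebra gives injectivity.

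I expect the genuine obstacle to be this last invariant-theoretic step: organizing the complete contractions of the mixed tensors coming from $W_n/\gl_n$ and from $\g\otimes\calO_n$ into the monomial basis of $[S^{\udot}\gl_n^{*}]^{\gl_n}\otimes\W^{\udot}(\g)$, and verifying that the dimension-$n$ relations cut off exactly the $(2n+1)$-st filtration level and nothing more. Everything else --- equivariance of $r$, the filtered and multiplicative nature of $\Phi$, and the agreement of differentials --- is a formal consequence of the Weyl formalism of \S\ref{sec::Weyl::super} together with the commutator~\eqref{eq::commutator}, and the $\g$-free part of the comparison is inherited verbatim from Theorem~\ref{thm::HWn}.
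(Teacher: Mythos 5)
Your proposal is correct and follows essentially the same route as the paper: the map is the canonical filtered morphism of Section~\ref{sec::Weyl::super} attached to your weight-zero retraction $r$, and the identification of the relative cochains with $\gl_n$-invariant complete contractions, truncated because $\Lambda^{>n}V^{*}=0$, is exactly the argument of Appendix~\ref{sec::chains::gl::inv} (carried out in detail in \cite{Khor::Wn_g}). One point to make explicit: the curvature generator must be sent to $(\xi,\eta)\mapsto r([\xi,\eta])-[r(\xi),r(\eta)]$ rather than to $r([\xi,\eta])$, since the correction term is what makes the $\g$-curvature vanish on pairs of arguments from $\g\otimes 1$ (both of weight zero) and hence forces \emph{every} curvature factor to consume one of the $n$ constant directions --- which is precisely why all of $F^{2n+1}$, including mixed and pure $\g$-curvature monomials, is killed.
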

We will go further and state the analogous result for the Lie subalgebra of constant and linear vector fields
extended by $\g$-valued functions. 
In Section~\ref{sec::Wn::def::WL} these algebras 
are denoted by $WL(m|0;\g)$ or $WL(m|;\g)$ for simplicity.
\begin{theorem}
\label{thm::old::WL(n|0;g)}
 The relative chain complex of the Lie algebra $WL(m|0;\g)$ is isomorphic to the truncated Weyl superalgebra 
of the Lie algebra $\g$:
\begin{equation}
\label{eq::WL_g=Weyl}
 \W^{\udot}(\g)/F^{2m+1}\W^{\udot}(\g)
\stackrel{\cong}{\longrightarrow} C^{\udot}(WL(m|0;\g),\gl_m;\kk).
\end{equation}
\end{theorem}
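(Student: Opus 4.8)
The plan is to realize both sides as free graded-commutative algebras and match generators. First I would identify the relative cochains: since $\gl_m$ is reductive, $C^{\udot}(WL(m|0;\g),\gl_m;\kk)=[\Lambda^{\udot}(WL(m|0;\g)/\gl_m)^{*}]^{\gl_m}$. As a $\gl_m$-module the quotient $WL(m|0;\g)/\gl_m$ is the sum of the constant vector fields (translations) and $\g\otimes\calO_m$; writing $V=\kk^m$ for the standard $\gl_m$-module of linear functions one has $\calO_m^{*}\simeq S^{\udot}V^{*}$, the dual of the translations is $\simeq V$, and $\g$ carries the trivial $\gl_m$-action. Thus a relative cochain is precisely a $\gl_m$-invariant in the exterior algebra on $V\oplus(\g^{*}\otimes S^{\udot}V^{*})$.

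Next I would produce the comparison map. The constant $\g$-valued functions $\g\otimes 1$ form a subalgebra $\g\hookrightarrow WL(m|0;\g)$, and evaluation of a function at the origin is a $\g$-equivariant (indeed $\gl_m$-equivariant) projection $WL(m|0;\g)\to\g$ splitting this embedding. Feeding this splitting into the universal construction of Subsection~\ref{sec::Weyl::super} yields a morphism of filtered dg-algebras $\Phi:\W^{\udot}(\g)\to C^{\udot}(WL(m|0;\g);\kk)$, sending the degree-$1$ and degree-$2$ Weyl generators to the cochain dual to $\g\otimes 1$ and to a translation wedged with a linear $\g$-valued function, respectively. One checks the image consists of basic (horizontal and invariant) cochains, so $\Phi$ factors through the relative subcomplex; this already furnishes a chain map, so the differentials require no separate verification.

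The heart is the invariant-theoretic description of the target. By the first fundamental theorem for $\GL_m$, a nonzero invariant must contract every $V^{*}$-index coming from the function factors against a translation covector from a $\Lambda^{\udot}V$-factor. A single factor $\g^{*}\otimes S^{j}V^{*}$ with $j\ge 2$ contributes symmetric $V^{*}$-indices that, paired against antisymmetric translations, vanish, since $\Hom_{\gl_m}(S^{j}V,\Lambda^{j}V)=0$; more generally the $V^{*}$-content of a product of $r$ function factors of $V^{*}$-degrees $j_1,\ldots,j_r$ can meet $\Lambda^{N}V$ with $N=\sum j_i$ only when $N=r$, i.e.\ every $j_i=1$, because a product of $r$ symmetric powers involves only diagrams of at most $r$ rows (Pieri) while $\Lambda^{N}$ needs $N$ rows. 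Hence every invariant is a wedge of \emph{constant} factors $\g^{*}$ (odd, degree $1$) and \emph{linear} factors contracted with translations through $\Lambda^{k}V\otimes (V^{*})^{\wedge k}$ (even, degree $2$); in the latter the antisymmetry of the translations cancels that of the wedge of the $k$ factors and symmetrizes the $\g^{*}$-labels, producing exactly $S^{k}\g^{*}$, with the determinant contraction a perfect pairing for $k\le m$ and vanishing for $k>m$ as $\Lambda^{>m}V=0$. Thus the relative cochain algebra is freely generated by $\g^{*}$ in degree $1$ and $\g^{*}$ in degree $2$, subject only to the truncation $S^{>m}\g^{*}=0$.

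Finally, comparing with the associated-graded description of the standard filtration from Subsection~\ref{sec::Weyl::super}, the Weyl algebra $\W^{\udot}(\g)$ is freely generated by the same two copies of $\g^{*}$, with $S^{>m}\g^{*}$ spanning $F^{2m+1}$. Therefore $\Phi$ sends the two families of Weyl generators bijectively onto the constant and linear generators above and carries $F^{2m+1}\W^{\udot}(\g)$ precisely onto the relation $\Lambda^{>m}V=0$, so it descends to an isomorphism of dg-algebras $\W^{\udot}(\g)/F^{2m+1}\W^{\udot}(\g)\longrightarrow C^{\udot}(WL(m|0;\g),\gl_m;\kk)$, as claimed. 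I expect the invariant classification of the third paragraph to be the main obstacle, although it is markedly easier than the full Gelfand--Fuchs computation: since $WL(m|0)$ has \emph{no} nonlinear vector-field directions, the only contractions involve the finite translation space $V$ and reduce to the elementary row-length bound, and the careful bookkeeping is exactly what is carried out in Appendix~\ref{sec::chains::gl::inv}.
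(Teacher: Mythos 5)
Your proposal is correct and follows essentially the same route as the paper: the paper's own proof (the subsection on chains on $WL(m|;\g)$ in Appendix~\ref{sec::chains::gl::inv}) likewise identifies the relative cochains with the $\gl(V)$-invariants of the exterior algebra on $V\oplus(\g^{*}\otimes S^{\udot}V^{*})$ and shows that only the constant and linear function factors survive, yielding $\Lambda^{\udot}(\g^{*})\otimes S^{\udot\leq m}(\g^{*})\cong\W^{\udot}(\g)/F^{2m+1}$. The only cosmetic differences are that you justify the key vanishing by the Pieri row-length bound where the paper invokes the symmetry-versus-antisymmetry cancellation from its graph description of invariants, and that you make the comparison map explicit via the evaluation-at-origin splitting fed into the universal construction of Subsection~\ref{sec::Weyl::super}, which the paper sets up but does not spell out for this theorem.
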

In particular, we can substitute 
for $\g$ the Lie algebra of vector fields on $\kk^{n}$ 
 and get the following:
\begin{equation}
\label{eq::W_m::WL_nm}
 \W^{\udot}(W_n,\gl_n)/F^{2n+1}\W^{\udot}( W_n,\gl_n)
\stackrel{\cong}{\longrightarrow} C^{\udot}(WL(m|n),\gl_m\oplus\gl_n;\kk).
\end{equation}
This observation becomes crucial for further implications.
Namely, the computation of the cohomologies of the Lie algebras $WL(m|n)$ for different $m$ will be enough
in order to get the description of the spectral sequence associated with the standard filtration on the 
relative Weyl algebra $\W^{\udot}(W_n,\gl_n)$. 

\subsection{Main theorems}
\label{sec::main::res}
Denote by $\pi$ the natural surjection $WL(m|n)\twoheadrightarrow W_n\twoheadrightarrow \gl_n$ of $\gl_n$-modules.
\begin{theorem}
\label{thm::homol::WLmn}
 The morphism 
of
Weyl superalgebras associated with the projection $\pi$ leads to the following  quasi-isomorphism
of dg-algebras:
\begin{multline}
\rH^{\udot}(BU_n;\kk)/ \rH^{\geq 2(n+m)+1}(BU_n;\kk) = 
\W^{\udot}(\gl_n,\gl_n)/F^{2(n+m)+1}\W^{\udot}(\gl_n,\gl_n) \stackrel{quis}{\longrightarrow} \\
\stackrel{quis}{\longrightarrow} C^{\udot}(WL(m|n),\gl_m\oplus\gl_n;\kk).
\end{multline}
In other words, the relative cohomology of the Lie algebra $WL(m|n)$ is isomorphic to the truncated ring
of polynomials $\kk^{\udot\leq 2(m+n)}[\Psi_2,\ldots,\Psi_{2n}]$ 
of degrees less 
than
or equal to 
$2m+2n$,
 and
generated by the $n$ even variables
$\Psi_2,\ldots,\Psi_{2n}$ with $deg\Psi_{2i}=2i$. 
\end{theorem}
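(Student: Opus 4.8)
The plan is to realize the map as the Weyl-functorial morphism $\W(\pi)$ attached to the $\gl_n$-module retraction $\pi\colon WL(m|n)\twoheadrightarrow W_n\twoheadrightarrow\gl_n$, which splits the inclusion of the fibrewise linear fields $\gl_n\hookrightarrow WL(m|n)$ and is a morphism of $\gl_n$-modules though not of Lie algebras, and then to compute both sides. Both complexes carry the standard Weyl filtration and $\W(\pi)$ respects it. First I would dispose of the source: by \eqref{eq::Weyl::assoc::grad} the associated graded of $\W^{\udot}(\gl_n,\gl_n)/F^{2(n+m)+1}$ is $\bigoplus_{k\le m+n} C^{\udot}(\gl_n,\gl_n;S^{k}\gl_n^{*})[-2k]$, and since $\gl_n$ is reductive each summand computes $[S^{k}\gl_n^{*}]^{\gl_n}$ concentrated in the single degree $2k$. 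Hence the standard-filtration spectral sequence degenerates at $E_1$ and the source cohomology is the truncated polynomial ring $\kk^{\udot\le 2(m+n)}[\Psi_2,\ldots,\Psi_{2n}]$ with $\deg\Psi_{2i}=2i$, the classes $\Psi_{2i}$ being the images of the generators of $[S^{\udot}\gl_n^{*}]^{\gl_n}$. This already produces the ring named in the statement; what remains is to show that $\W(\pi)$ is a quasi-isomorphism.

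The difficulty is that, on the target side, the identification \eqref{eq::W_m::WL_nm} presents $C^{\udot}(WL(m|n),\gl_m\oplus\gl_n;\kk)$ as $\W^{\udot}(W_n,\gl_n)/F^{2m+1}$, whose standard filtration does \emph{not} degenerate: its associated graded involves the complexes $C^{\udot}(W_n,\gl_n;S^{k}W_n^{*})$, which is exactly the content of Theorem~\ref{thm::sym_coef}, so running this filtration would both beg the question and require recomputing the very invariants of \cite{GFF} that we wish to bypass. To avoid this I would instead filter the target by the Hochschild--Serre filtration \eqref{eq::Ser:Hoch:filtr} attached to the embedding of the block-triangular parabolic $\borel\subset WL(m|n)$ of linear fields preserving the flag, taken relatively to its Levi factor $\gl_m\oplus\gl_n$, so that $E_1^{p,q}=\rH^{q}(\borel,\gl_m\oplus\gl_n;\Hom(\Lambda^{p}(WL(m|n)/\borel),\kk))$. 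The advantage of enlarging the Levi to the full parabolic $\borel$ is that the coefficient modules become parabolic modules whose relative cohomology is governed by the BGG resolution; this is precisely what Lemmas~\ref{lm::H_b::coeff} and~\ref{lm::H_gl::to::H_b} deliver, and they make $E_1$ explicit and sparse.

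With $E_1$ in hand I would run a degeneration argument. Using the internal grading of $WL(m|n)$ by polynomial degree of the coefficients, which the differentials preserve, together with the vanishing pattern supplied by the two parabolic lemmas, I expect all higher differentials to vanish for weight reasons, so that the spectral sequence collapses. The surviving terms should assemble into the free commutative algebra on the classes $\Psi_2,\ldots,\Psi_{2n}$: the Gelfand--Fuchs truncation of Theorem~\ref{thm::HWn} confines the fibre contribution to degrees $\le 2n$, while the $F^{2m+1}$-truncation inherited from \eqref{eq::W_m::WL_nm} caps the base contribution, and the two combine to bound the total degree by $2m+2n=2(m+n)$, matching the source truncation $F^{2(n+m)+1}$. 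One then checks on generators that $\W(\pi)$ carries the $i$-th source generator to the class $\Psi_{2i}$, so that the induced map is an isomorphism of graded rings.

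The main obstacle is the degeneration step of the parabolic Hochschild--Serre spectral sequence: one must show that the potentially enormous $E_1$ computed from the BGG lemmas collapses onto the $n$ polynomial generators, with no surviving differentials and no extra classes. This is the whole point of the method, replacing the invariant-theoretic miracle of \cite{GFF} by a combinatorial weight-and-Weyl-group vanishing statement, and it is also where the precise truncation level $2(n+m)$ must be pinned down. I would expect the identification of the map on generators and of the ring structure to be comparatively routine once collapse is established.
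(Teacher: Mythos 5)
Your setup coincides with the paper's: the same source computation via the standard filtration, the same parabolic subalgebra $\b=WL(m|n)\cap\gl_{m+n}$, the same Hochschild--Serre filtration relative to the Levi $\gl_m\oplus\gl_n$, and the same two parabolic lemmas. But there is a genuine gap exactly at the step you yourself flag as the main obstacle, and the tools you propose there (internal grading, ``weight reasons'') cannot close it. First, a structural point: once Lemma~\ref{lm::H_b::coeff}, fed by the Howe decomposition of $\Lambda^{\udot}\left(WL(m|n)/\b\right)^{*}$, gives $E_1^{p,q}=0$ for $q>0$, all differentials $d_r$ with $r\geq 2$ vanish automatically for positional reasons; the entire content of the proof sits in the differential $d_1$ along the bottom row $E_1^{\udot,0}$ and in the identification of that row, and your proposal offers no mechanism for either. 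The paper's mechanism is precisely the ingredient your plan omits: a \emph{second} Hochschild--Serre spectral sequence, for $\b\hookrightarrow W_{m+n}$, and the morphism of spectral sequences induced by the inclusion $WL(m|n)\hookrightarrow W_{m+n}$ (Lemmas~\ref{lm::E1:Wm+n} and~\ref{lm::E1:WLmn}; note that Lemma~\ref{lm::H_gl::to::H_b}, which you cite, is really the input for the $W_{m+n}$ side that you do not have). Since $\rH^{\udot}(W_{m+n},\gl_{m+n};\kk)$ is known by Gelfand--Fuchs (Theorem~\ref{thm::HWn}) to coincide with its cochain complex and to be concentrated in even degrees, the $W_{m+n}$-sequence has zero differentials; the Howe-duality computation exhibits $E_1^{\udot,0}(WL(m|n))$ as the span of the summands indexed by diagrams of length at most $n$ inside this even, differential-free space, so $d_1=0$ on the bottom row, and one obtains a surjection of rings $\rH^{\udot\leq 2(m+n)}(BSU_{m+n})\twoheadrightarrow\rH^{\udot}(WL(m|n),\gl_m\oplus\gl_n;\kk)$ without ever computing the row's cohomology directly. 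This comparison is what replaces the invariant theory of \cite{GFF}; dropping it leaves you facing exactly that computation.

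Second, your final identification step is circular as stated. You propose to cap the ``base contribution'' by the $F^{2m+1}$-truncation coming from~\eqref{eq::W_m::WL_nm}; but the associated graded pieces of that filtration are precisely the complexes $C^{\udot}(W_n,\gl_n;S^{k}W_n^{*})$, whose cohomology (Theorem~\ref{thm::sym_coef}) is \emph{deduced} from the present theorem, and no degree bound is visible at the cochain level since those relative cochains are spread over many degrees. The paper pins down the kernel of the surjection by a different, non-circular device: it varies $m$, uses the compatibility of the surjections $p_{m'\to m}$ for all $m'\geq m$ (Corollary~\ref{thm::Wn+m->Wn}, diagram~\eqref{diag::Wn->WL}), and the fact that the inverse limit of the truncated Weyl algebras is the full $\W^{\udot}(W_n,\gl_n)$, whose cohomology is $\kk[\Psi_2,\ldots,\Psi_{2n}]$; this squeezes $\rH^{\udot}(WL(m|n),\gl_m\oplus\gl_n;\kk)$ into being exactly the truncated polynomial ring. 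So your plan needs two additions before it is a proof: the comparison spectral sequence for $\b\hookrightarrow W_{m+n}$ together with the surjectivity of the induced map of spectral sequences, and the inverse-limit-over-$m$ argument (or an equally non-circular substitute) for the final identification. Checking that $\W(\pi)$ hits the generators $\Psi_{2i}$ is indeed the easy part, as you say.
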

\begin{proof}
 The proof of this theorem is postponed to Section~\ref{sec::main::methods} 
In this section we will only explain why this result implies all 
the
others. 
\end{proof}

Together with Equation~\eqref{eq::W_m::WL_nm} we get the main corollary of this paper:
\begin{corollary}
\label{thm::Wn:SWn:relative}
For 
any
$m\geq 1$,  the relative cohomology of the Lie algebra $W_n$ 
with coefficients in the $m$-th symmetric power of the coadjoint representation,
is always
different from zero only in 
the degree
$2n$, and 
 this is  the same as the space of invariants $[S^{m+n}(\gl_n)]^{\gl_n}$: 
$$
\rH^{2n}(W_n,\gl_n;S^{m}(W_n^{*})) =  [S^{m+n}(\gl_n)]^{\gl_n}, \ \text{ and } \ 
\rH^{i\neq 2n}(W_n,\gl_n;S^{m}(W_n^{*})) = 0
$$
\end{corollary}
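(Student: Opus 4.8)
The plan is to extract the groups $\rH^{\udot}(W_n,\gl_n;S^{m}W_n^{*})$ from the standard filtration on the relative Weyl superalgebra $\W^{\udot}(W_n,\gl_n)$, whose associated graded is computed by~\eqref{eq::Weyl::assoc::grad}: the even quotients are $F^{2k}/F^{2k+1}=C^{\udot}(W_n,\gl_n;S^{k}W_n^{*})[-2k]$ and the odd quotients vanish. Hence the cohomology I am after is precisely what sits on the $E_1$-page of the spectral sequence of this filtration. The advantage is that the abutment, and all its finite truncations, are already known: writing $A_m:=\W^{\udot}(W_n,\gl_n)/F^{2m+1}$, the identification~\eqref{eq::W_m::WL_nm} reads $A_m\cong C^{\udot}(WL(m|n),\gl_m\oplus\gl_n;\kk)$, and Theorem~\ref{thm::homol::WLmn} (together with Theorem~\ref{thm::HWn} in the base case $m=0$) gives $\rH^{\udot}(A_m)=\kk^{\udot\le 2(m+n)}[\Psi_2,\ldots,\Psi_{2n}]$, concentrated in even degrees. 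So I have complete control of both ends of the spectral sequence and only need to compare neighbouring truncations.

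Concretely, I would isolate the $m$-th graded layer. Since the odd quotients vanish, $\W^{\udot}(W_n,\gl_n)/F^{2m}=A_{m-1}$, and the filtration produces, for every $m\ge 1$, a short exact sequence of complexes
$$0\longrightarrow C^{\udot}(W_n,\gl_n;S^{m}W_n^{*})[-2m]\longrightarrow A_m\longrightarrow A_{m-1}\longrightarrow 0.$$
Abbreviating $H^{j}:=\rH^{j}(W_n,\gl_n;S^{m}W_n^{*})$, the long exact sequence takes the shape $\cdots\to H^{j}\to \rH^{j+2m}(A_m)\to \rH^{j+2m}(A_{m-1})\xrightarrow{\delta} H^{j+1}\to\cdots$. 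The key point is that the quotient map $\rH^{\udot}(A_m)\to \rH^{\udot}(A_{m-1})$ is surjective. Indeed, by Theorem~\ref{thm::homol::WLmn} both rings are truncations of the same polynomial ring $\kk[\Psi_2,\ldots,\Psi_{2n}]$, and the generators $\Psi_{2i}$ arise as the images of the Chern generators of $\W^{\udot}(\gl_n,\gl_n)$ under the quasi-isomorphisms of that theorem, compatibly as $m$ grows; thus the comparison map is simply the truncation of graded polynomial rings, which is surjective. Surjectivity forces $\delta=0$, and the long exact sequence breaks into short exact sequences
$$0\longrightarrow H^{j}\longrightarrow \rH^{j+2m}(A_m)\longrightarrow \rH^{j+2m}(A_{m-1})\longrightarrow 0,$$
identifying $H^{j}$ with the kernel of the truncation map in cohomological degree $j+2m$.

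Computing this kernel finishes the argument. The map $\rH^{i}(A_m)\to \rH^{i}(A_{m-1})$ is an isomorphism for every $i\le 2(m+n)-2$, so the kernel vanishes whenever $j\le 2n-2$; in odd degree $j=2n-1$ both spaces vanish; and for $j>2n$ both $A_m$ and $A_{m-1}$ have already been truncated to zero in degree $j+2m$. The single surviving degree is $j=2n$, where $\rH^{2(m+n)}(A_{m-1})=0$ while $\rH^{2(m+n)}(A_m)$ is the top homogeneous component, which is $[S^{m+n}\gl_n]^{\gl_n}$; hence $\rH^{2n}(W_n,\gl_n;S^{m}W_n^{*})=[S^{m+n}\gl_n]^{\gl_n}$ and all other groups vanish, as claimed. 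The one genuinely load-bearing step is the surjectivity of $\rH^{\udot}(A_m)\to \rH^{\udot}(A_{m-1})$: I expect the main work to be in verifying that the quasi-isomorphisms of Theorem~\ref{thm::homol::WLmn} are natural in $m$, i.e. that they intertwine the truncation projections $A_m\to A_{m-1}$ with the evident projections of the truncated Weyl algebras $\W^{\udot}(\gl_n,\gl_n)/F^{2(n+m)+1}$, so that the comparison map really is the truncation of graded rings and not some a priori twisted isomorphism.
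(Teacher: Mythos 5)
Your proposal is correct and is essentially the paper's own argument: the paper likewise uses the identification~\eqref{eq::W_m::WL_nm} to form the short exact sequence of complexes $0\to F^{2m}/F^{2m+1}\to A_m\to A_{m-1}\to 0$, invokes Theorem~\ref{thm::homol::WLmn} to identify both $\rH^{\udot}(A_m)$ and $\rH^{\udot}(A_{m-1})$ as truncations of $\kk[\Psi_2,\ldots,\Psi_{2n}]$, concludes that $p_m$ is surjective on cohomology so the long exact sequence splits, and reads off the kernel in the single surviving degree. The naturality-in-$m$ point you flag as the load-bearing step is exactly what the paper supplies via the compatible system of surjections in Corollary~\ref{thm::Wn+m->Wn} (diagram~\eqref{diag::Wn->WL}), which is built into its proof of Theorem~\ref{thm::homol::WLmn}.
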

\begin{proof}
The isomorphism~\eqref{eq::W_m::WL_nm} between the Chevalley-Eilenberg complex of the Lie algebra $WL(m|n)$ and
the truncated Weyl superalgebra for $W_n$ implies the following 
short exact sequence of complexes that relates two 
consecutive
values of $m$:
$$
0\to \frac{F^{2m-1}\W^{\udot}(W_n,\gl_n)}{F^{2m+1}\W^{\udot}(W_n,\gl_n)} 
\to C^{\udot}(WL(m|n),\gl_m\oplus\gl_n;\kk) 
\stackrel{p_m}{\to} C^{\udot}(WL(m-1|n),\gl_{m-1}\oplus\gl_n;\kk) 
\to 0
$$
where the surjective map has been denoted $p_m$.
As was mentioned in~\eqref{eq::Weyl::assoc::grad}
the standard filtration on the Weyl algebra has 
$F^{2m}=F^{2m-1}$, 
and the associated graded $F^{2m}/F^{2m+1}$ 
is isomorphic to the Chevalley-Eilenberg complex with coefficients in the $m$-th symmetric power of the coadjoint representation. 
Theorem~\ref{thm::homol::WLmn} implies that 
the 
relative cohomology of the Lie algebra $WL(m|n)$ is the truncation
of the polynomial ring
$\rH^{\udot}(BU_n)=\kk[\Psi_2,\ldots,\Psi_{2n}]$. Namely we cut off the polynomials of degree $2(m+n+1)$.
Therefore, the map $p_m$ is surjective on cohomology rings and is also the  same as cutting off the polynomials of degree $2(m+n)$.
Hence, the map $p_{m}$ should be surjective on cohomology rings and we get a short exact sequence of cohomologies: 
$$
\xymatrix{
 \rH^{\pt}(W_n,\gl_n;S^{m}W_n^*)[2m] \ar@{^{(}->}[r] &
\rH^{\udot}(WL(m|n),\gl_m\oplus\gl_n;\kk) 
\ar@{->>}^(.45){p_{m}}[r] 
\ar@{->}^{\cong}[d]
&
\rH^{\udot}(WL(m-1|n),\gl_{m-1}\oplus\gl_n;\kk) 
\ar@{->}^{\cong}[d]
 \\
 & \kk^{\udot\leq 2(m+n)}[\Psi_2,\ldots,\Psi_{2n}] 
\ar@{->>}[r] 
& \kk^{\udot\leq 2(m-1+n)}[\Psi_2,\ldots,\Psi_{2n}]
}
$$
Thus, we conclude
that the shifted cohomology of the cochain complex $C^{\udot}(W_n,\gl_n;S^{m}W_n^*)[2m]$ is different from zero 
only 
at
degree $2(m+n)$, where it coincides with the polynomials of degree $2(m+n)$.
Making the opposite homological shift we 
obtain
\begin{equation}
\label{eq::H(Wn,SWn)}
\rH^{i}(W_n,\gl_n;S^{m}W_n^*) = 
\left\{
\begin{array}{c}
{
\begin{array}{c}
{
\rH^{2(m+n)}(BU_n;\kk) = 
[S^{m+n}(\gl_n)]^{\gl_n} = }\\
\{\text{polynomials of degree } 2(n+m) \text{ in } 
\kk[\Psi_2,\ldots,\Psi_{2n}] \},
\end{array}
}
 \text{ if } i=2n, 
\\
0, 
\text{ if } i\neq 2n.
\end{array}
\right.
\end{equation}
\end{proof}

We also state here the generalization of Theorem~\ref{thm::homol::WLmn}
to the Lie algebra related to a flag of foliations with given codimensions.
\begin{theorem}
\label{thm::HWm|nnng}
For any 
collection of integers  $(m;n_1,\ldots,n_k)$, 
 for an arbitrary Lie algebra $\g$
and a ($\gl_{m}\oplus\gl_{n_1}\oplus\ldots\oplus\g$)-equivariant projection 
$\pi:WL(m|n_1,\ldots,n_k;\g)\twoheadrightarrow \gl_{n_1}\oplus\ldots\gl_{n_{k}}\oplus\g$ {\cbl,*}
the corresponding map from the Weyl superalgebra factors through
the quotient by ideal
$I_{n_1,\ldots,n_k}(g)$, which is generated by symmetric powers 
$ 
S^{(m+1+\sum_{i=1}^{j}n_{i})}(\gl_{n_1}\oplus\ldots\oplus\gl_{n_{j}})
$ 
with $j$ ranging from $1$ to $k$
and the symmetric power 
$ 
S^{(m+1+\sum_{i=1}^{k}n_{i})}(\gl_{n_1}\oplus\ldots\oplus\gl_{n_{k}}\oplus \g).
$ 
The quotient map is a quasi-isomorphism:
\begin{equation}
\xymatrix{
\frac{\W^{\udot}(\gl_{n_{1}}\oplus\ldots\oplus\gl_{n_{k}}\oplus\g,\gl_{n_{1}}\oplus\ldots\oplus\gl_{n_{k}})}{I_{n_1,\ldots,n_k}(\g)}
\ar@{=}[d]
\ar@{->}^(.40){\quis}[r]
\ar@{->}^{\quis}[rd]
& 
{\W^{\udot}(W(n_1,\ldots,n_k;\g),\gl_{n_1}\oplus\ldots\oplus\gl_{n_k})}/{F^{2m+1}}
\ar@{->}^{\quis}[d]
\\
\frac{\rH^{\udot}(BU_{n_1}\times\ldots\times BU_{n_k})\otimes \W^{\udot}(\g)}{I_{n_1,\ldots,n_k}(\g)}
&
C^{\udot}(WL(m|n_1,\ldots,n_k;\g),\gl_{m}\oplus\ldots\gl_{n_k};\kk).
} 
\label{eq::Wg=WLnng}
\end{equation}
Moreover, 
quasi-iso~\eqref{eq::Wg=WLnng} is compatible with filtrations: 
the standard
filtration on 
the Weyl algebra and the Hochschild-Serre filtration
on the Chevalley-Eilenberg complex.
Consequently, the same result remains valid in the non-relative case (see Theorem~\ref{thm::HWnm::absolute}).
\end{theorem}
\begin{proof}
The proof repeats 
that
of Theorem~\ref{thm::homol::WLmn}.
However, one may prove this theorem by induction on the number $k$ of foliations in the flag.
In this case, the
key ingredient will be  
the computation~\eqref{eq::H(Wn,SWn)} and 
the subsequent degeneration of 
Hochschild-Serre spectral sequences based on the following embeddings of Lie algebras:
$$
WL(m|n_1,\ldots,n_k;\g)\supset WL(m|n_1,\ldots,n_k) \supset \ldots \supset WL(m|n_1).
$$
We omit 
a description of these spectral sequences which is absolutely straitforward.
\end{proof}
In particular, Theorem~\ref{thm::HWm|nnng} describes the cohomology ring
of the Lie algebra of vector fields preserving a given flag of foliations.
\begin{corollary}
\label{thm::relative_har_flag_sloj}
There exists an isomorphism between the truncated ring of characteristic classes and the relative cohomology ring of the Lie 
algebra of vector fields preserving a given flag of foliations:
$$
\frac{\rH^{\udot}(BU_{n_1}\times\ldots\times BU_{n_k})}{I_{n_1,\ldots,n_k}} \stackrel{\quis}{\longrightarrow}
\rH^{\udot}(W(n_1,\ldots,n_k),\gl_{n_1}\oplus\ldots\oplus\gl_{n_k};\kk),
$$
where the ideal $I_{n_1,\ldots,n_k}$ is generated by the union of subspaces
$\rH^{>2(n_1+\ldots+n_r)}(BU_{n_1}\OP\ldots\OP BU_{n_r})$ for $r=0,\dots,k$.

In particular, we have the following equivalences of vector spaces indexed by the homological degree $i$
$$\rH^{i}(W(n_1,\ldots,n_k),\gl_{n_1}\oplus\ldots\oplus\gl_{n_k};\kk) =
\left\{
\begin{array}{l}
{ 0,} {\text{ if } i \text{ is odd,}} \\
{\left[\frac{S^{l}(\gl_{n_1}\oplus\ldots\oplus\gl_{n_k})}
{\left(\prod\limits_{j=1}^{k} S^{n_1+\ldots+n_j+1}(\gl_{n_1}\oplus\ldots\oplus\gl_{n_j}) \right)}\right]^{\gl_{n_1}\oplus\ldots\oplus\gl_{n_k}}, 
}{\text{ if } i =2l.}
\end{array}
\right.
$$
The union of these identities gives a graded isomorphism of corresponding rings.
\end{corollary}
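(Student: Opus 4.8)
The plan is to obtain Corollary~\ref{thm::relative_har_flag_sloj} as the specialization of Theorem~\ref{thm::HWm|nnng} at $\g=0$ and $m=0$, in the same spirit in which the Corollary on $\rH^{\udot}(W_n,\gl_n;S^mW_n^{*})$ was deduced from Theorem~\ref{thm::homol::WLmn}. The crucial difference is that here we want the \emph{trivial} coefficients $\kk=S^{0}$, which already sit at the bottom of the standard filtration, so I do not need the short exact sequences in $m$ used there; a single specialization suffices. Both specializations are admissible: the base of the inductive proof of Theorem~\ref{thm::HWm|nnng} is Computation~\eqref{eq::H(Wn,SWn)}, which is valid for $m=0$ (where it reduces to the Gelfand--Fuchs Theorem~\ref{thm::HWn}).

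First I would record the effect of the two specializations on diagram~\eqref{eq::Wg=WLnng}. Putting $\g=0$ turns $\W^{\udot}(\g)$ into the ground field $\kk$, so that the lower-left corner becomes $\rH^{\udot}(BSU_{n_1}\times\ldots\times BSU_{n_k})/I_{n_1,\ldots,n_k}$; at the same time the last generator $S^{m+1+\sum_{i=1}^{k}n_i}(\gl_{n_1}\oplus\ldots\oplus\gl_{n_k}\oplus\g)$ of the ideal merges with the $j=k$ generator, so $I_{n_1,\ldots,n_k}(0)$ is exactly the ideal generated by the spaces $S^{m+1+\sum_{i=1}^{j}n_i}(\gl_{n_1}\oplus\ldots\oplus\gl_{n_j})$ for $j=1,\ldots,k$. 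Putting $m=0$ identifies $WL(0|n_1,\ldots,n_k)$ with $W(n_1,\ldots,n_k)$ and $\gl_m=\gl_0$ with $0$, while on the Weyl side the truncation $F^{2m+1}=F^{1}$ collapses the filtration to its bottom quotient
$$\W^{\udot}(W(n_1,\ldots,n_k),\gl_{n_1}\oplus\ldots\oplus\gl_{n_k})/F^{1}=C^{\udot}(W(n_1,\ldots,n_k),\gl_{n_1}\oplus\ldots\oplus\gl_{n_k};\kk),$$
i.e.\ the relative Chevalley--Eilenberg complex with trivial coefficients.

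Next I would pass to cohomology. Composing the identification furnished by the left column of~\eqref{eq::Wg=WLnng} (which, exactly as in Theorem~\ref{thm::HWn}, presents the cohomology of $\W^{\udot}(\gl_{n_1}\oplus\ldots\oplus\gl_{n_k},\gl_{n_1}\oplus\ldots\oplus\gl_{n_k})/I_{n_1,\ldots,n_k}$ as the quotient $\rH^{\udot}(BSU_{n_1}\times\ldots\times BSU_{n_k})/I_{n_1,\ldots,n_k}$ of the characteristic-class ring) with the diagonal quasi-isomorphism, I obtain the first displayed isomorphism of the Corollary verbatim. For the explicit degreewise description I would invoke the invariant theory recalled in the introduction to identify $\rH^{\udot}(BSU_{n_1}\times\ldots\times BSU_{n_k})$ with $[S^{\udot}(\gl_{n_1}\oplus\ldots\oplus\gl_{n_k})]^{\gl_{n_1}\oplus\ldots\oplus\gl_{n_k}}$ under the convention that sends symmetric degree $l$ to cohomological degree $2l$. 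This forces the odd cohomology to vanish (the source lives in even degrees, equivalently $F^{2k-1}/F^{2k}=0$ for the standard filtration), and in degree $2l$ it presents the answer as the degree-$l$ invariants modulo the ideal generated by $S^{n_1+\ldots+n_j+1}(\gl_{n_1}\oplus\ldots\oplus\gl_{n_j})$; since symmetric degree $n_1+\ldots+n_j+1$ corresponds to cohomological degree $>2(n_1+\ldots+n_j)$, this is precisely the generating set $\rH^{>2(n_1+\ldots+n_r)}(BSU_{n_1}\OP\ldots\OP BSU_{n_r})$ of the statement.

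The genuinely delicate points are bookkeeping rather than conceptual, and are the ones I would verify most carefully: that $m=0$ is a legitimate instance of Theorem~\ref{thm::HWm|nnng} (it is, because $F^{1}$ isolates exactly the constant-coefficient term $S^{0}$), and that the $\g=0$ collapse reproduces $I_{n_1,\ldots,n_k}$ with the correct top generator rather than a spurious extra one. Once these are checked, the ring structure transports for free: every arrow in~\eqref{eq::Wg=WLnng} is a morphism of dg-algebras, so the induced isomorphism on cohomology is automatically a graded ring isomorphism, which is the final assertion of the Corollary.
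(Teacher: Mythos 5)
Your proposal is correct and is exactly the paper's own proof, which reads in its entirety ``Substitution $m=0$ and $\g=0$ in Theorem~\ref{thm::HWm|nnng}''; your write-up simply makes explicit the bookkeeping (the identification $WL(0|n_1,\ldots,n_k;0)=W(n_1,\ldots,n_k)$, the collapse $F^{2m+1}=F^{1}$ to trivial coefficients, and the merging of the last ideal generator with the $j=k$ one) that the paper leaves implicit. No discrepancy to report.
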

\begin{proof}
 Substitute $m=0$ and $\g=0$ in Theorem~\ref{thm::HWm|nnng}.
\end{proof}

\subsection{Absolute case}
\label{sec::W_flag}
So far we have been discussing only relative cohomologies 
because we think that this is the core of the construction.
Below we explain what should be done in order to compute the absolute cohomology.
The general prescription looks as follows:
in all statements from 
Section~\ref{sec::main::res}
one has to replace the relative Weyl algebra for matrix Lie algebras by the absolute 
ones.
For example, all  the maps in Diagram~\eqref{eq::Wg=WLnng} 
maintain the property of being quasi-isomorphisms
if one replaces 
the relative complexes by absolute 
ones.

Here are some
corollaries which we find important for applications in the geometry of foliations:
\begin{theorem}
\label{thm::sym_coef}
\label{thm::Wn:SWn:absolute}
The absolute cohomology of the Lie algebra of formal vector fields $W_n$ with coefficients in the 
$m$th
symmetric 
power ($m\geq 1$) of the coadjoint representation is the tensor product of the relative cohomology and the cohomology 
$\rH^{\udot}(\gl_n;\kk)$ of the matrix Lie algebra:
$$\rH^{i}(W_n;S^{m}W_n^{*}) =
\left\{ \begin{array}{l}
         {[S^{n+m}\gl_n]^{\gl_n}\otimes [\Lambda^{i-2n}(\gl_n)]^{\gl_n}, {\mbox{ if }} 2n\leq i\leq n^2+2n,}\\
         {0,  {\mbox{ otherwise. }}}
       \end{array}
\right.
$$
\end{theorem}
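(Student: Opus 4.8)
I want to prove Theorem~\ref{thm::sym_coef}, the \emph{absolute} version, from the relative statement already recorded in Equation~\eqref{eq::H(Wn,SWn)}, namely that $\rH^{i}(W_n,\gl_n;S^{m}W_n^{*})$ is $[S^{n+m}\gl_n]^{\gl_n}$ in degree $2n$ and vanishes otherwise. The natural device is the Hochschild--Serre spectral sequence for the pair $(W_n,\gl_n)$ with coefficients in the $W_n$-module $S^{m}W_n^{*}$, whose standard form reads
$$
E_2^{p,q}=\rH^{p}\bigl(\gl_n;\,\rH^{q}(W_n,\gl_n;S^{m}W_n^{*})\bigr)\ \Longrightarrow\ \rH^{p+q}(W_n;S^{m}W_n^{*}).
$$

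**The plan.** First I would invoke the relative computation to see that the inner coefficients $\rH^{q}(W_n,\gl_n;S^{m}W_n^{*})$ are concentrated in $q=2n$, where they equal $[S^{n+m}\gl_n]^{\gl_n}$. Since $\gl_n$ is reductive and we work over a field of characteristic zero, its cohomology with coefficients in any module reduces to invariants tensored with the trivial-coefficient cohomology; moreover $[S^{n+m}\gl_n]^{\gl_n}$ is already a trivial $\gl_n$-module (invariants carry no residual $\gl_n$-action). Hence the $E_2$ page collapses to a single row $q=2n$:
$$
E_2^{p,2n}=\rH^{p}(\gl_n;\kk)\otimes[S^{n+m}\gl_n]^{\gl_n}
=[\Lambda^{p}(\gl_n)]^{\gl_n}\otimes[S^{n+m}\gl_n]^{\gl_n},
$$
using the classical description $\rH^{\udot}(\gl_n;\kk)\cong[\Lambda^{\udot}(\gl_n)]^{\gl_n}$ recalled in the introduction.

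**Degeneration.** With a single nonzero row at $q=2n$, all differentials $d_r$ (which change $q$) vanish automatically for $r\ge2$, so the spectral sequence degenerates at $E_2$. Reindexing by total degree $i=p+2n$ yields
$$
\rH^{i}(W_n;S^{m}W_n^{*})=[S^{n+m}\gl_n]^{\gl_n}\otimes[\Lambda^{i-2n}(\gl_n)]^{\gl_n},
$$
which is exactly the claimed answer; the range $2n\le i\le n^2+2n$ is forced because $[\Lambda^{p}(\gl_n)]^{\gl_n}$ is the exterior algebra on generators of degrees $1,3,\dots,2n-1$ and hence vanishes for $p>\dim\gl_n=n^2$ (indeed the top invariant lives in degree $1+3+\cdots+(2n-1)=n^2$). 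The multiplicative statement that the isomorphism is one of rings follows because the spectral sequence of the pair is multiplicative and the collapse identifies the associated graded with the honest tensor product, the relative factor contributing the $S^{n+m}$-part and the base $\rH^{\udot}(\gl_n;\kk)$ contributing the exterior part.

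**The main obstacle.** The only genuinely delicate point is the collapse of the coefficient module as a $\gl_n$-\emph{module}, not merely as a vector space: I must be sure that $\rH^{2n}(W_n,\gl_n;S^{m}W_n^{*})$ carries the trivial $\gl_n$-action so that $E_2^{p,2n}$ really factors as a tensor product. This is where reductivity of $\gl_n$ and the compatibility of the $\gl_n$-action with the relative differential are used: the relative cohomology is computed on the subcomplex of $\gl_n$-basic cochains, on which $\gl_n$ already acts trivially, so the induced action on $\rH^{2n}$ is trivial by construction. Once this is pinned down, everything else is the standard collapse of a one-row spectral sequence, and the degree bound and ring structure are immediate consequences of the Weyl description of the invariant algebras.
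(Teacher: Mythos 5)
Your proposal is correct and follows essentially the same route as the paper: the paper's own proof considers the Hochschild--Serre spectral sequence for the embedding $\gl_n\hookrightarrow W_n$, identifies its first term as $\rH^{q}(\gl_n;\kk)\otimes \rH^{p}(W_n,\gl_n;S^{m}W_n^{*})$ (using reductivity of $\gl_n$ and triviality of its action on relative cochains, exactly the point you isolate as the ``main obstacle''), and concludes degeneration because the relative cohomology is concentrated in degree $2n$. The only cosmetic difference is your index convention and your initial phrasing of the $E_2$-term as $\rH^{p}(\gl_n;\rH^{q}(W_n,\gl_n;S^{m}W_n^{*}))$, which is legitimate here precisely because of the reductive splitting you then invoke.
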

\begin{proof}
Consider the Hochschild-Serre spectral sequence associated with the canonical embedding $\gl_n\hookrightarrow W_n$.
The first term of this sequence is $E_1^{p q} = \rH^{q}(\gl_n;\kk)\otimes \rH^{p}(W_n,\gl_n;S^{k}W_n^{*})$.
Computation~\ref{eq::H(Wn,SWn)} implies that for $p\neq 2n$ the corresponding space $E_1^{p q}$ is zero and, therefore, 
this spectral sequence degenerates in the first term.
\end{proof}

\begin{theorem}
\label{thm::HWnm::absolute}
\label{thm::W_flag_absolute}
 The absolute cohomology of the Lie algebra of vector fields preserving a given flag of foliations
may be computed via the cohomology of the truncated Weyl algebra. We have a quasi-isomorphism:
$$
\frac{\W^{\udot}(\gl_{n_1}\oplus\ldots\oplus\gl_{n_k})} {I_{n_1,\ldots,n_k}} \stackrel{\quis}{\longrightarrow} 
C^{\udot}(W(n_1,\ldots,n_k);\kk)
$$
where the ideal $I_{n_1,\ldots,n_k}$ is generated by symmetric powers
$ 
S^{(1+\sum_{i=1}^{j}n_{i})}(\gl_{n_1}\oplus\ldots\oplus\gl_{n_{j}})
$ with $j$ 
ranging
from $1$ to $k$. 
\end{theorem}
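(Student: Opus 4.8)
The statement is the specialization to $m=0$ and $\g=0$ of the absolute form of Theorem~\ref{thm::HWm|nnng}, the form in which every relative Weyl algebra of a matrix block is replaced by the corresponding absolute one (as announced in Section~\ref{sec::W_flag}). So the plan is to reproduce the proof of Theorem~\ref{thm::HWm|nnng} verbatim, with one replacement: the relative input Computation~\eqref{eq::H(Wn,SWn)} is exchanged for its absolute counterpart, Theorem~\ref{thm::sym_coef}. Write $\mathsf{L}:=\gl_{n_1}\oplus\ldots\oplus\gl_{n_k}$. First I would produce the comparison map. Because $\mathsf{L}$ is reductive it splits off $W(n_1,\ldots,n_k)$ as an $\mathsf{L}$-module, so the functoriality-plus-acyclicity construction of Section~\ref{sec::Weyl::super} supplies a morphism $\W^\udot(\mathsf{L})\to C^\udot(W(n_1,\ldots,n_k);\kk)$ that is compatible with the standard filtration on the source and the Hochschild-Serre filtration of the embedding $\mathsf{L}\hookrightarrow W(n_1,\ldots,n_k)$ on the target. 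One checks, as in the relative case, that this characteristic map annihilates the ideal $I_{n_1,\ldots,n_k}$ and therefore descends to the asserted map out of $\W^\udot(\mathsf{L})/I_{n_1,\ldots,n_k}$; it remains to see that the descended map is a quasi-isomorphism.

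Since the map respects the filtrations, I would induct on the length $k$ of the flag, peeling off the innermost foliation through the same chain of subalgebras used for Theorem~\ref{thm::HWm|nnng}. The single inductive step is controlled by a Hochschild-Serre spectral sequence whose first term, after exploiting the reductivity of the matrix blocks, factors as the cohomology of the innermost block $\rH^\udot(\gl_{n_k};\kk)$ tensored with the cohomology of $W_{n_k}$ with coefficients in the symmetric powers of the coadjoint representation. This is exactly what Theorem~\ref{thm::sym_coef} evaluates: the piece coming from $S^{m}W_{n_k}^{*}$ equals $[S^{m+n_k}\gl_{n_k}]^{\gl_{n_k}}\otimes[\Lambda^\udot\gl_{n_k}]^{\gl_{n_k}}$. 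Assembling these identifications block by block, the relevant page of the source spectral sequence becomes $\rH^\udot(\mathsf{L};\kk)\otimes[S^\udot(\mathsf{L})/I_{n_1,\ldots,n_k}]^{\mathsf{L}}$, whose invariant tensor factor is precisely the relative flag answer of Corollary~\ref{thm::relative_har_flag_sloj}; on this page the characteristic map is an isomorphism onto the corresponding page of the target, so the descended map will be a quasi-isomorphism once I know that both spectral sequences collapse at that page.

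The hard part is degeneration. In the relative computation the coefficient cohomology $\rH^\udot(W_{n_i},\gl_{n_i};S^{m}W_{n_i}^{*})$ is concentrated in the single degree $2n_i$, so the Hochschild-Serre differentials die for pure dimension reasons; this is what made Corollary~\ref{thm::relative_har_flag_sloj} immediate. In the absolute setting the extra factors $[\Lambda^\udot\gl_{n_i}]^{\gl_{n_i}}=\rH^\udot(\gl_{n_i};\kk)$ smear this cohomology across the whole band $2n_i\le\ast\le n_i^2+2n_i$, and a bare degree count no longer rules out higher differentials or unexpected transgressions. The device that saves the argument is to filter not by total cohomological degree but by the symmetric, or Pontryagin, weight carried by the standard filtration of the Weyl algebra: with respect to this weight the bottom class $[S^{m+n_i}\gl_{n_i}]^{\gl_{n_i}}$ is again concentrated, while the exterior generators sit in weight zero and so cannot support a differential that changes the weight. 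Hence the spectral sequences degenerate weight by weight exactly as in the relative case, the exterior factors being carried along freely. What then remains, and what constitutes the genuine computational core inherited from the proof of Theorem~\ref{thm::HWm|nnng}, is to verify that under the characteristic map the symmetric generators of $\W^\udot(\mathsf{L})$ transgress precisely to the even generators prescribed by Chern-Weil theory, so that the relations cut out by $I_{n_1,\ldots,n_k}$---which encode the Bott-type vanishing of the characteristic classes of the successive normal bundles---are exactly the relations realized on the Lie-algebra side. Granting this compatibility the filtered map is a quasi-isomorphism, as asserted.
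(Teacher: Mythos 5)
Your construction of the filtered characteristic map and your identification of the first pages of the two spectral sequences are fine, and up to that point you are following the standard relative-to-absolute mechanism that the paper invokes by citing \cite{GF,Khor::Wn_g}. The step that fails is precisely the one you call ``the hard part'': the degeneration you claim is false, and the weight argument offered in its support is backwards. Take the smallest instance of the theorem, $k=1$, $n_1=1$, so $W(n_1)=W_1$. The Hochschild--Serre spectral sequence of $\gl_1\hookrightarrow W_1$ has $E_2\cong\rH^{\udot}(\gl_1;\kk)\otimes\rH^{\udot}(W_1,\gl_1;\kk)$, spanned by $1,\zeta,\xi,\zeta\xi$ in degrees $0,1,2,3$, whereas $\rH^{\udot}(W_1;\kk)$ is two-dimensional ($1$ and the Godbillon--Vey class in degree $3$); so the transgression $d_2\colon\zeta\mapsto\xi$ is nonzero and the sequence does not collapse. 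The same happens on the source: in $\W^{\udot}(\gl_1)/I_1=\Lambda(\zeta)\otimes\kk[\xi]/(\xi^2)$ the differential $d\zeta=\xi$ kills both $\zeta$ and $\xi$. In general the odd generators of $\rH^{\udot}(\gl_{n_i};\kk)$ transgress to the Chern classes, and these nonzero differentials are exactly why the answer is the cohomology of the truncated Weyl algebra rather than the much larger space $\rH^{\udot}(\mathsf{L};\kk)\otimes[S^{\udot}(\mathsf{L})/I_{n_1,\ldots,n_k}]^{\mathsf{L}}$ (for $W(1,1)$ the latter has dimension $20$, the former dimension $6$). Your weight argument cannot rescue this: by definition the differential $d_r$ of a filtered complex raises the filtration degree --- your ``symmetric weight'' --- by exactly $r$, so a class of weight zero maps to a class of weight $r$; the transgression $\zeta_i\mapsto\xi_i$ is precisely such a map, not an impossibility. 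Indeed your own closing paragraph, which asks the generators to ``transgress as prescribed by Chern--Weil theory,'' contradicts the degeneration you have just asserted: a degenerate spectral sequence has no transgressions.

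The repair --- and this is what the paper's one-line proof means by the ``standard'' implication from Corollary~\ref{thm::relative_har_flag_sloj} --- is that no degeneration is needed at all. You already possess a map of filtered complexes, and it induces an isomorphism of first pages: on the Chevalley--Eilenberg side reductivity of $\mathsf{L}=\gl_{n_1}\oplus\ldots\oplus\gl_{n_k}$ gives $E_2^{p,q}\cong\rH^{q}(\mathsf{L};\kk)\otimes\rH^{p}(W(n_1,\ldots,n_k),\mathsf{L};\kk)$, on the Weyl side the standard filtration gives $E_2^{p,q}\cong\rH^{q}(\mathsf{L};\kk)\otimes\bigl[S^{p/2}(\mathsf{L})/I_{n_1,\ldots,n_k}\bigr]^{\mathsf{L}}$, and Corollary~\ref{thm::relative_har_flag_sloj} identifies the two compatibly with your map. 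Both filtrations are bounded in each total degree, so the comparison theorem for spectral sequences of filtered complexes applies: a filtered map inducing an isomorphism on some page induces isomorphisms on all later pages and on the abutments, whatever the higher differentials are. The nonzero transgressions on the two sides then correspond to one another automatically, and the total cohomologies agree, which is the assertion. Replacing your degeneration discussion by this comparison argument turns your outline into a correct proof, and into essentially the proof the paper intends.
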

\begin{proof}
 The implication from the relative case (Corollary~\ref{thm::relative_har_flag_sloj}) is standard. 
The method used  in~\cite{GF,Khor::Wn_g} for the same implications related with the cohomology of  Lie algebras
$W_n$ and $W(n;\g)$ respectively is also applied in our situation.
\end{proof}

In Section~\ref{sec::full_flag_formulas}
we will show a particular computation of the generating series of the cohomology 
of 
the
truncated Weyl algebra for the case $n_i=1$ for all $i$.
From the geometrical point of view this cohomology corresponds to characteristic classes of 
a full flag of foliations.

\subsection{Characteristic classes of flags of foliations}
\label{sec::obstruction}
In this subsection we first recall the standard construction of characteristic classes of 
foliations and of flags of foliations and then show how does our cohomological computation 
predicts a cohomological obstruction 
to the existence of
 a flag of foliations. 

As was mentioned in the introduction the concept of formal geometry
assigns to a foliation $\F$ of codimension $n$ on a smooth manifold $X$ a characteristic map
$$
ch: \rH^{\udot}(W_n,\mathsf{o}(n);\R) \longrightarrow \rH^{\udot}_{DR}(X)
$$
where $\mathsf{o}(n)$ is the Lie algebra of the group of orthogonal matrices.
If the foliation $\F$ is framed, i.e. the trivialization of the normal bundle is fixed,
then the corresponding characteristic classes come from the characteristic map 
$$
ch: \rH^{\udot}(W_n;\R) \longrightarrow \rH^{\udot}_{DR}(X)
$$
with the source space being isomorphic to the absolute cohomology of the Lie algebra $W_n$.
For a foliation of codimension $1$, the relative and absolute characteristic maps coincide and 
lead to a definition of the Godbillon-Vey class (\cite{Haefliger},\cite{Bott_har_cl}).
Therefore, we call 
the representatives of
 the absolute cohomology by generalized Godbillon-Vey classes. 

The same construction assigns to a flag of foliations of codimensions 
$(n_1,\ldots,n_k)$
The codimension is considered in the nested sense. 
That is the foliation $\F_i$ is a subfoliation of $\F_{i-1}$ and has codimension $n_1+\ldots+n_i$
on a smooth manifold $X$
a characteristic map
$$
ch: \rH^{\udot}(W(n_1,\ldots,n_k),\mathsf{o}(n_1)\oplus \ldots \mathsf{o}(n_k);\R)
\longrightarrow \rH^{\udot}(X)
$$
and whenever all foliations are framed we get the characteristic map from the absolute Lie algebra cohomology:
$$
ch: \rH^{\udot}(W(n_1,\ldots,n_k);\R)
\longrightarrow \rH^{\udot}(X)
$$
The construction of characteristic maps becomes 
 more explicit whenever 
a foliation is determined by a system of \emph{determining forms}:
\begin{definition}
The system of smooth $1$-forms $\omega_1,\ldots,\omega_n$ on a manifold $X$ determines a framed foliation of codimension $n$ 
iff the following conditions are satisfied:
\begin{itemize}
 \item[($\imath$)] The covectors $\omega_1(p)$,\ldots,$\omega_n(p)$ are linearly independent at each point $p\in X$.
Equivalently, the $n$-form $\omega_1\wedge\ldots\wedge\omega_n$ 
is never zero
on $X$.
 \item[($\imath\imath$)] 
For all $i$ there exists a collection of $1$-forms $\eta_{ij}$
such that $d\omega_i = \sum_{j=1}^{n} \omega_j\wedge \eta_{ij}$, where $d$ is the De Rham differential.
Equivalently, the product $\omega_1\wedge\ldots\wedge\omega_n\wedge d\omega_i$ is zero for all $i=1$,\ldots,$n$.
\end{itemize} 
\end{definition}
If the system $\omega_1,\ldots,\omega_n$ is a system of determining forms of a foliation $\F$ on a manifold $X$,
then the restriction of $\omega_i$ on any component of an intersection of a leaf of $\F$ 
and any open contractible set on $X$ is identically zero for all $i$.
Note, that the system of determining forms $\omega_1,\ldots,\omega_n$ 
defines a flag of foliations $\{\F_1\supset\ldots\supset\F_k\}$
of nested codimensions $n_1,n_2,\ldots,n_k$ with $\sum_{i=1}^{k}n_i=n$ iff 
for all $r=1$,\ldots,$k$ the collection $\omega_1$,\ldots,$\omega_{n_1+\ldots+n_r}$ 
is a system of determining forms of the foliation $\F_r$.
This is equivalent to the following condition:
$$
\forall i\leq \sum_{j=1}^{r} n_r \text{ there exists } 
\eta_{ij}\in\Omega^{1}_{DR}(X) \text{ such that } d\omega_i=\sum_{j=1}^{n_1+\ldots+n_r} \omega_j\wedge \eta_{ij}.
$$
The matrix units $e_{ij}$ with $1\leq i,j\leq n$ form a basis of the Lie algebra $\gl_n$.
We define a linear map 
$$ch_{\omega}:\gl_{n_1}\oplus\ldots\oplus\gl_{n_k} \rightarrow \Omega^{1}_{DR}(X)$$
by mapping 
the
matrix unit $e_{ij}$ from the 
$r$th factor $\gl_{n_r}$ to the $1$-form 
$\eta_{n_1+\ldots+n_{r-1}+i,n_1+\ldots+n_{r-1}+j}$.
Recall that the Weyl superalgebra $\W^{\udot}(\gl_{n_1}\oplus\ldots\oplus\gl_{n_k})$ 
is a free acyclic skew-commutative algebra, therefore, any linear map of 
the generators $e_{ij}$ is
extended in a unique way to 
a 
map of dg-algebras:
$$
ch_{\omega}:\W^{\udot}(\gl_{n_1}\oplus\ldots\oplus\gl_{n_k}) \longrightarrow \Omega_{DR}^{\udot}(X)
$$
\begin{statement}
The map $ch_{\omega}$ assigned to a system of determining forms of a flag of foliations on a manifold 
factors through the truncated Weyl superalgebra and determines characteristic classes of this flag of foliations:
$$
\xymatrix{
\W^{\udot}(\gl_{n_1}\oplus\ldots\oplus\gl_{n_k}) 
\ar@{->>}[r]
&
 \frac{\W^{\udot}(\gl_{n_1}\oplus\ldots\oplus\gl_{n_k})}{I_{n_1,\ldots,n_k}}
\ar@{->}^{ch_{\omega}}[rd]
\ar@{->}_{quis}[d] & \\
& C^{\udot}(W(n_1,\ldots,n_k);\R) 
\ar@{->}^(.6){ch}[r] 
&
\Omega_{DR}^{\udot}(X)
}
$$
where the ideal $I_{n_1,\ldots,n_k}$ and the vertical quasi-iso 
are the same as
were defined in Theorem~\ref{thm::HWnm::absolute}.
\end{statement}
\begin{remark}
 The construction of characteristic classes via a system of determining forms does not require
any knowledge of infinite-dimensional Lie algebras and their cohomology.
(See e.g.~\cite{Foliations::Candel} ch.6.)
However, the proofs and the exposition 
become clear 
when
working with  the Gelfand-Fuchs cohomology.
See e.g.~\cite{Haefliger} for the comparison of these two languages.
\end{remark}

Let us fix two integers $m$ and $n$.
The commutative diagram of embeddings of Lie algebras:
$$
\xymatrix{
\gl_m \ar@{^{(}->}[r] \ar@{^{(}->}[d] & \gl_n\oplus\gl_m \ar@{^{(}->}[r] \ar@{^{(}->}[d] & \gl_{m+n} \ar@{^{(}->}[d] \\
W_m \ar@{^{(}->}[r] & W(m,n) \ar@{^{(}->}[r] & W_{m+n}
}
$$
defines the following commutative diagram of dg-algebras:
$$
\xymatrix{
\W^{\udot}(\gl_m)/ I_{m}  \ar@{->}[d]^{\quis} & 
\frac{\W^{\udot}(\gl_n\oplus\gl_m)}{I_{m,n}} \ar@{->}[l] \ar@{->}^{\quis}[d] &
\W^{\udot}(\gl_{m+n})/ I_{m+n} \ar@{->}^{\quis}[d] \ar@{->}[l]\\
C^{\udot}(W_m;\kk) & C^{\udot}(W(m,n);\kk) \ar@{->}[l] & C^{\udot}(W(m+n);\kk) \ar@{->}[l]
}
$$
This construction is universal and is compatible with 
 characteristic maps of foliations.
Indeed, for a flag  consisting of 
just
 two foliations $\F'\supset \F$ on a manifold $X$
of codimensions $m$ and $m+n$ respectively,
 we have the following commutative diagram:
\begin{equation}
\label{diag::char::Wn:X} 
\xymatrix{
\rH^{\udot}(W_m;\R)  \ar@{->}_{ch(\F')}[dr] 
& \rH^{\udot}(W(m,n);\R)  \ar@{->}|{ch(\F'\supset \F)}[d] \ar@{->}[l] 
& \rH^{\udot}(W(m+n);\R)  \ar@{->}^{ch(\F)}[dl] \ar@{->}[l] \\
& \rH^{\udot}(X) &
}
\end{equation}
\begin{corollary}
\label{cor::non::integr}
 A foliation $\F$ on a manifold $X$ of codimension $n$ may not be included into a flag of foliations $\F'\supset \F$
if it has a nonzero characteristic class of degree greater than $n^2+2$.
\end{corollary}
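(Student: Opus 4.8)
The plan is to convert ``being part of a flag'' into a factorization of the characteristic map and then to bound, uniformly, the top nonzero degree of the cohomology of the flag Lie algebra.

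First I would set up the reduction. Suppose $\F$ has codimension $n$ and is included into a flag $\F'\supset\F$ whose coarser member $\F'$ has codimension $a$, necessarily with $1\le a\le n-1$. Then $\F'\supset\F$ is a flag of codimensions $(a,n-a)$, its associated Lie algebra is $W(a,n-a)$, and the rightmost triangle of the commutative diagram~\eqref{diag::char::Wn:X} gives the factorization $ch(\F)=ch(\F'\supset\F)\circ r_a$, where $r_a\colon\rH^{\udot}(W_n;\R)\to\rH^{\udot}(W(a,n-a);\R)$ is the restriction along $W(a,n-a)\hookrightarrow W_n$. Hence any class $\alpha$ with $r_a(\alpha)=0$ has $ch(\F)(\alpha)=0$. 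It therefore suffices to prove that $\rH^{i}(W(a,n-a);\R)=0$ for all $i>n^2+2$ and all $a\in\{1,\dots,n-1\}$: a characteristic class of $\F$ of degree $>n^2+2$ would then be forced to vanish as soon as a flag exists, so its non-vanishing obstructs every flag.

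Next I would bound the top degree of $\rH^{\udot}(W(a,n-a);\R)$ by inspecting the complex that computes it. By Theorem~\ref{thm::HWnm::absolute} this cohomology is that of the truncated Weyl algebra $\W^{\udot}(\gl_a\oplus\gl_{n-a})/I_{a,n-a}$. Writing $\g=\gl_a\oplus\gl_{n-a}$, the Weyl algebra is, as a graded vector space, the free graded-commutative algebra on a copy of $\g^{*}$ in degree $1$ and a copy of $\g^{*}$ in degree $2$; since the ideal $I_{a,n-a}$ is generated by the symmetric powers $S^{a+1}(\gl_a)$ and $S^{n+1}(\g)$, which lie in the degree-$2$ polynomial factor, the quotient is the tensor product of the exterior algebra on the degree-$1$ generators with the truncation $S^{\udot}(\g^{*})/\bar I$ of the polynomial factor. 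The exterior factor is concentrated in degrees at most $\dim\g=a^{2}+(n-a)^{2}$, and, because $\bar I$ contains every symmetric power of degree $\ge n+1$, the polynomial factor is concentrated in cohomological degrees at most $2n$. Consequently the whole complex, and a fortiori its cohomology, vanishes above $2n+a^{2}+(n-a)^{2}=n^{2}+2n-2a(n-a)$.

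Finally I would optimize this bound in $a$. On $\{1,\dots,n-1\}$ the product $a(n-a)$ is minimized at the endpoints $a=1$ and $a=n-1$, where it equals $n-1$, so $\max_{a}\bigl(n^{2}+2n-2a(n-a)\bigr)=n^{2}+2$, attained exactly when the coarser foliation has codimension one. Thus $\rH^{i}(W(a,n-a);\R)=0$ for every $i>n^{2}+2$ and every admissible $a$, which closes the reduction. The essential point---and the only source of the explicit bound $n^{2}+2$---is this degree bookkeeping together with the observation that the weakest obstruction (the largest top degree) comes from enlarging $\F$ by a codimension-one foliation; I do not expect any serious difficulty beyond organizing these degree counts and the factorization furnished by diagram~\eqref{diag::char::Wn:X}. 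Note that for $n\ge2$ one has $n^{2}+2<n^{2}+2n$, the top degree of $\rH^{\udot}(W_n;\R)$, so the range of genuinely obstructing classes is nonempty.
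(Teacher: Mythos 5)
Your proposal is correct and follows essentially the same route as the paper's proof: factor $ch(\F)$ through $\rH^{\udot}(W(a,n-a);\R)$ via diagram~\eqref{diag::char::Wn:X}, bound the top nonzero degree of the truncated Weyl algebra $\W^{\udot}(\gl_a\oplus\gl_{n-a})/I_{a,n-a}$ by $a^2+(n-a)^2+2n$ using Theorem~\ref{thm::HWnm::absolute}, and maximize over $a$ to get $n^2+2$ at $a=1$ or $a=n-1$. Your write-up merely makes the degree bookkeeping (exterior part bounded by $\dim\g$, symmetric part bounded by $2n$) and the factorization step more explicit than the paper does.
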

\begin{proof}
Let us count the top possible degrees of elements in the truncated Weyl superalgebras.
Indeed, elements of maximal degree in $\frac{\W^{\udot}(\gl_n)}{I_n}$ 
belong
 to the subspace
$\Lambda^{top}(\gl_n^{*})\otimes S^{n}(\gl_n)$. Therefore, $\rH^{i}(W_n;\kk)=0$ for 
$i> n^{2}+2n$.
Also by 
using degree 
arguments 
 we know that $\rH^{i}(W(n-d,d);\kk)=0$ for $i>(n-d)^{2} + d^2 +2n$.
The 
number 
$(n-d)^{2} + d^2 +2n$ for $d=1$,\ldots,$n-1$ 
reaches its maximum
when $d=1$ or $d=n-1$ 
where it takes the value

$n^2+2$.
Consequently, 
$$\forall i>n^2+2 \text{ and } \forall d<n \quad \rH^{i}(W(n-d,d);\kk)=0.$$
Therefore, 
existence of a 
nonzero element in the image of $ch(\F)$ of homological degree greater than $n^{2}+2$ 
is in contradiction
with the 
 commutative diagram~\eqref{diag::char::Wn:X} 
\end{proof}

Let us show that the example of a foliation mentioned in the introduction satisfies the conditions of Corollary~\ref{cor::non::integr}.
Let $\Gamma$ be a discrete subgroup of $SL_{n+1}(\R)$ such that the quotient space $SL_{n+1}(\R)/\Gamma$ is compact.
Let $\mathsf{P}$ be a subgroup of $SL_{n+1}(\R)$ that fixes a given line in $\R^{n+1}$.
Any orbit of the left action of $\mathsf{P}$ on the quotient space $SL_{n+1}(\R)/\Gamma$ 
defines a leaf of a foliation of codimension $n$ 
in
 $SL_{n+1}(\R)/\Gamma$. Denote 
this
foliation by $\F_{\mathsf{P}}$.
\begin{statement}
\label{cor::SL::foliation}
The foliation $\F_{\mathsf{P}}$ on the compact space $SL_{n+1}(\R)/\Gamma$ has a nontrivial characteristic class of the top degree $n^2+2n$
and, therefore, for $n\geq 2$ may not be a subfoliation. 
\end{statement}
\begin{proof}
The main feature of examples of foliations coming from the action of a Lie subgroup $H$ on a compact quotient 
of a Lie group $G$ by a discrete subgroup $\Gamma$ is that most of 
the 
computations 
inherit
the 
structures of 
the 
computations for the corresponding Lie algebras ${\mathsf{p}}=Lie(\mathsf{P}) \subset \g=Lie(G)$.
Standard arguments shows that 
there 
exists a collection of $1$-forms $\omega_{i,j}\in \Omega^{1}(SL_{n+1}(\R)/\Gamma)$ 
where $i,j$ range from $1$ to $n+1$,
$\sum_{i=1}^{n+1} \omega_{i,i}=0$ and 
$d\omega_{i,j} = \sum_{k=1}^{n+1}\omega_{i k} \omega_{k j}$.
The foliation $\F_{\mathsf{P}}$ is defined by the system of determining forms $\omega_{1,n+1}$,\ldots,$\omega_{n,n+1}$.
The direct check shows that there exists a class in $\rH^{n^{2}+2n}(W_n;\R)$ whose image under 
 the characteristic map is equal to the
product $\wedge_{(i,j)\neq (n+1,n+1)}\omega_{i,j}\in\Omega^{(n+1)^{2}-1}(SL_{n+1}(\R)/\Gamma)$. 
The latter product is a volume form
on the space $SL_{n+1}(\R)/\Gamma$ and, therefore, represents a nontrivial cohomological class.
\end{proof}

\section{The core of the proof}
\label{sec::main::methods}
Let us 
begin by stating 
several technical results and 
then 
showing 
how 
they imply Theorem~\ref{thm::homol::WLmn}.
The idea is to compute the relative cohomology of the Lie algebra $W_{m+n}$ of all vector fields and then show that 
the inclusion $WL(m|n)\hookrightarrow W_{m+n}$ produces the surjective map of relative cohomology rings in 
the opposite direction.
The surjectivity follows from the degenerations of the Hochschild-Serre spectral sequences associated with
embeddings of a parabolic subalgebra. 
Let us start by looking at some 
general results 
on cohomologies of parabolic subalgebras.

\subsection{Parabolic subalgebra}
\label{sec::parabolic::gl}
Consider the Lie algebra of matrices $\gl_{m+n}$.
Let $\b$ be it's parabolic subalgebra,
 with maximal reductive subalgebra isomorphic to the  
direct sum 
$\gl_m\oplus\gl_n$.
Note that $\b$ is embedded into the Lie algebra $WL(m|n)$,
and, moreover, 
$\b$ is isomorphic to the intersection $WL(m|n)\cap \gl_{m+n}$ 
inside $W_{m+n}$.
In this subsection we 
shall present 
several
general homological results on relative homology of 
 the parabolic subalgebra $\b\hookrightarrow\gl_{m+n}$,
with all proofs 
 postponed to Appendix~\ref{sec::parabolic}.

Denote the vector space $\kk^{m}$ by $V$ and the vector space $\kk^{n}$ by $U$,
such that $\gl_m=\gl(V)$, $\gl_n=\gl(U)$, $\gl_{m+n}=\gl(V\oplus U)$ and
the parabolic subalgebra $\b$ is isomorphic to $\gl(V)\oplus\gl(U)\oplus V^{*}\otimes U$.
Recall, that the set of irreducible $\gl_n=\gl(U)$-modules are 
enumerated
by dominant highest weights
$\lambda=\{\lambda_1\geq\ldots\geq \lambda_n\}$ with all $\lambda_i\in\mathbb{Z}$.
Moreover, if  $\lambda_n$ and, consequently, all $\lambda_i$ are nonnegative integers then the collection $\lambda$ is called a Young diagram
and  the corresponding 
irreducible $\gl_n$ module is called polynomial (or holomorphic). 
Moreover, this module may be defined by using Schur-Weyl duality:
Let $|\lambda|=\sum \lambda_i$ be the number of boxes in the Young diagram $\lambda$.
Let $\Sigma_{\lambda}$ be an irreducible representation of the symmetric group $S_{|\lambda|}$ 
assigned to the same diagram $\lambda$.
Then the irreducible polynomial $\gl(U)$-module with the highest weight $\lambda$ is isomorphic to the image of the Schur functor
\begin{equation}
\label{eq:Schur::Functor}
S^{\lambda}(U):= U^{\otimes |\lambda|}\otimes_{S_{|\lambda|}} \Sigma_{\lambda}
\end{equation}
(We refer to \cite{Fulton::Repr} for details on Schur-Weyl duality
and to \cite{Macdonald} ch1.8 for the detailed definition of a Schur functor).
Schur functors are defined for Young diagrams of arbitrary length.
However, if the length $l(\lambda)>\dim(U)$ then $S^{\lambda}(U)$ is zero, otherwise it is a nontrivial
irreducible $\gl(U)$-module. 
The symmetric power $S^{i}U$ is also an example of a Schur functor assigned to the Young diagram
 $\{i\}$ of length $1$. The trivial module $\kk$ is isomorphic to $S^{0}U$.

We also consider $S^{\lambda}U$ to be an irreducible $\b$-module where the action of $\gl_m$ is trivial.
\begin{lemma}
\label{lm::H_b::coeff}
The higher relative cohomology of the Lie algebra $\b$ with coefficients in 
the
$\b$-module $Hom(S^{\lambda}U;L)$
vanishes for all Young diagrams $\lambda$
of length less than or equal to $n=\dim(U)$
and for all finite-dimensional $\gl_{m+n}$-modules $L$.
The zero cohomology is isomorphic to the space of 
$\gl_{m+n}$-invariant maps between $S^{\lambda}(U\oplus V)$ and $L$:
\begin{align*}
 \rH^{>0}(\b,\gl_{m}\oplus\gl_n;Hom(S^{\lambda}(U);L)) = & 0, \\
 \rH^{0}(\b,\gl_{m}\oplus\gl_n;Hom(S^{\lambda}(U);L)) \cong & Hom_{\gl_{m+n}}(S^{\lambda}(V\oplus U); L).
\end{align*}
\end{lemma}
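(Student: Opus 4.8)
The plan is to reduce the computation to the cohomology of the nilradical and then invoke Kostant's theorem. Write $\b=(\gl_m\oplus\gl_n)\ltimes\n$, where $\gl_m\oplus\gl_n$ is the reductive Levi and $\n=V^{*}\otimes U$ is the abelian nilradical (a single off-diagonal block). The relative Chevalley--Eilenberg complex is $C^{\bullet}(\b,\gl_m\oplus\gl_n;M)=[\Lambda^{\bullet}\n^{*}\otimes M]^{\gl_m\oplus\gl_n}$, i.e. the $(\gl_m\oplus\gl_n)$-invariant part of the complex $C^{\bullet}(\n;M)$ computing nilradical cohomology. Since the Levi is reductive, taking invariants is exact, so
$$\rH^{\bullet}(\b,\gl_m\oplus\gl_n;M)=\bigl[\rH^{\bullet}(\n;M)\bigr]^{\gl_m\oplus\gl_n}.$$
As $\n$ acts trivially on $S^{\lambda}U$, for $M=\Hom(S^{\lambda}U;L)=(S^{\lambda}U)^{*}\otimes L$ the coefficients split off and $\rH^{\bullet}(\n;M)=(S^{\lambda}U)^{*}\otimes\rH^{\bullet}(\n;L)$; taking invariants turns the statement into the extraction of the $S^{\lambda}U$-isotypic component of the $(\gl_m\oplus\gl_n)$-module $\rH^{\bullet}(\n;L)$.

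First I would reduce to irreducible $L$: as $\gl_{m+n}$ is reductive every finite-dimensional $L$ is a direct sum of irreducibles, and both sides of the claim are additive in $L$. For $L=L_{\Lambda}$ irreducible of highest weight $\Lambda$, Kostant's theorem on the cohomology of a nilradical (equivalently, the BGG resolution underlying Appendix~\ref{sec::parabolic}) gives
$$\rH^{k}(\n;L_{\Lambda})\cong\bigoplus_{\ell(w)=k}F_{w\cdot\Lambda},$$
the sum taken over the minimal-length representatives $w$ of the cosets $(S_m\times S_n)\backslash S_{m+n}$ with $\ell(w)=k$, where $F_{\mu}$ is the irreducible $\gl_m\oplus\gl_n$-module of highest weight $\mu$ and $w\cdot\Lambda=w(\Lambda+\rho)-\rho$. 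Ordering the coordinates so that $\n$ lies in the positive root spaces, one has $S^{\lambda}U=S^{\lambda}(V\oplus U)^{\n}=F_{\nu}$ with $\nu=(\lambda_1,\dots,\lambda_n,0,\dots,0)$, which is precisely the $\gl_{m+n}$-highest weight of $S^{\lambda}(V\oplus U)$. Here the hypothesis $\ell(\lambda)\le n$ is used exactly to guarantee that $S^{\lambda}U$ is nonzero and that $\nu$ is a genuine dominant weight of $\gl_{m+n}$.

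By Schur's lemma the sought isotypic component is governed by those $w$ with $w\cdot\Lambda=\nu$, and the heart of the argument is a regularity observation. Since $\Lambda$ is $\gl_{m+n}$-dominant, $\Lambda+\rho$ is strictly dominant, hence regular; likewise $\nu+\rho$ is regular. The relation $w\cdot\Lambda=\nu$ reads $w(\Lambda+\rho)=\nu+\rho$, so $w$ maps one regular dominant weight to another; since a regular weight has a unique dominant representative in its orbit and trivial stabiliser, this forces $\Lambda+\rho=\nu+\rho$ and $w=e$. Consequently the only contribution occurs in degree $k=0$ and only for $\Lambda=\nu$, which proves $\rH^{>0}(\b,\gl_m\oplus\gl_n;\Hom(S^{\lambda}U;L))=0$. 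In degree $0$ the surviving term computes the multiplicity of $F_{\nu}=S^{\lambda}U$ in $L^{\n}=\bigoplus_{i}F_{\Lambda_i}$, which equals the multiplicity of $L_{\nu}=S^{\lambda}(V\oplus U)$ in $L$, i.e. $\Hom_{\gl_{m+n}}(S^{\lambda}(V\oplus U);L)$, as asserted.

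The step demanding the most care is the representation-theoretic bookkeeping rather than the vanishing itself: fixing the orientation of the parabolic so that $S^{\lambda}U$ is matched with the \emph{top} $(\gl_m\oplus\gl_n)$-piece of $S^{\lambda}(V\oplus U)$ (equivalently, the $\n$-invariants rather than the $\n$-coinvariants), and checking that the degree-zero identification is natural in $L$ so that the reduction to irreducibles is legitimate. It is also worth recording that $\ell(\lambda)\le n$ cannot be dropped: for $\ell(\lambda)>n$ the left-hand side vanishes identically (since $S^{\lambda}U=0$) while $\Hom_{\gl_{m+n}}(S^{\lambda}(V\oplus U);L)$ need not, so the hypothesis is precisely what makes $\nu$ a dominant weight realised by $S^{\lambda}(V\oplus U)$. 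If one prefers to stay within the BGG framework of the appendix, the same computation is obtained by resolving $S^{\lambda}(V\oplus U)$ by generalized Verma modules $M_{\b}(F_{w\cdot\nu})$ and applying $\mathrm{Ext}^{\bullet}_{(\gl_{m+n},\gl_m\oplus\gl_n)}(-,L)$; the regularity argument then reappears as the fact that only the top term $M_{\b}(S^{\lambda}U)$ of the resolution survives.
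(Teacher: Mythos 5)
Your proof is correct, and it reaches the Lemma by a route that is equivalent in content to the paper's but organized differently. The paper (Appendix~\ref{sec::parabolic}) never passes through nilradical cohomology: it works in the relative category of $(\b,\gl_m\oplus\gl_n)$-modules, resolves the irreducible $\gl_{m+n}$-module by Lepowsky's parabolic BGG resolution~\eqref{eq::BGG::parabolic}, and computes the relative Ext groups term by term via~\eqref{eq::Hom::BGG} (item~\ref{thm::parab::vanish::item3} of Theorem~\ref{thm::parab::vanish}); the two-block specialization is Corollary~\ref{cor::Ext:gl:b}, and semisimplicity of finite-dimensional $\gl_{m+n}$-modules converts it into the form~\eqref{eq::Hb=inv} stated in the Lemma. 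You instead identify $C^{\udot}(\b,\gl_m\oplus\gl_n;M)$ with the $(\gl_m\oplus\gl_n)$-invariants of the nilradical complex, use exactness of invariants over the reductive Levi, and quote Kostant's theorem for $\rH^{\udot}(\n;L_{\Lambda})$. Since Kostant's theorem and the parabolic BGG resolution carry the same information, both arguments funnel into the identical decisive step, which you state correctly: $\Lambda+\rho$ and $\nu+\rho$ are regular dominant, so $w(\Lambda+\rho)=\nu+\rho$ forces $w=e$ and $\Lambda=\nu$ --- precisely the paper's observation in Corollary~\ref{cor::Ext:gl:b} that a dot-orbit contains at most one integral dominant weight. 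What your route buys: it avoids setting up the category $(\p_{I},\h_{I})\ttt mod$ and checking that restricted parabolic Verma modules compute relative Ext, and it makes the role of the hypothesis $l(\lambda)\leq n$ transparent. What the paper's route buys: the same BGG machinery, once installed, also yields item~\ref{thm::parab::vanish::item2} of Theorem~\ref{thm::parab::vanish} with coefficients $\Lambda^{\udot}\n_{I}^{-}$, i.e.\ Lemma~\ref{lm::H_gl::to::H_b}, which your method would have to recover by a separate application of Kostant's theorem to those coefficients. Your cautions about the orientation of the parabolic ($\n$-invariants versus coinvariants) and about naturality in $L$ are exactly the right ones and are handled correctly. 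One small correction to your optional closing aside: the Ext there must be taken relative to the pair $(\b,\gl_m\oplus\gl_n)$ (equivalently, the BGG resolution must first be restricted to $\b$), not relative to $(\gl_{m+n},\gl_m\oplus\gl_n)$; over the latter pair any such computation acquires $\rH^{\udot}(\gl_{m+n},\gl_m\oplus\gl_n;\kk)$, the cohomology of the Grassmanian, as a tensor factor and is therefore not concentrated in degree zero.
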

In particular, if $\lambda=0$ we have the following degeneration property for all $\gl_{m+n}$-modules L:
\begin{equation}
\label{eq::Hb:M}
\rH^{>0}(\b, \gl_{m}\oplus\gl_n; L) = 0 \ \text{ and } \rH^{0}(\b, \gl_{m}\oplus\gl_n; L) = [L]^{\gl_{m+n}}
\end{equation}
\begin{proof}
 The proof of this Lemma is postponed to Appendix~\ref{sec::parabolic}
where items~\ref{thm::parab::vanish::item1} and~\ref{thm::parab::vanish::item2}  
of Theorem~\ref{thm::parab::vanish} 
cover
the generalization of this statement for an arbitrary parabolic subalgebra.
Corollary~\ref{cor::Ext:gl:b} explains what one gets in the case we are interested in.
\end{proof}

\begin{lemma}
\label{lm::H_gl::to::H_b}
The relative Hochschild-Serre spectral sequence associated with an embedding $\b\hookrightarrow\gl_{m+n}$
with coefficients in any finite-dimensional $\gl_{m+n}$-module $L$ degenerates in the first term. 
More precisely, we have
$$
\rH^{p}(\bm,\gl_{m}\oplus\gl_{n};\Lambda^{q}({\np})^{*} \otimes L) =
\left\{
\begin{array}{l}
0, \text{ if } p\neq q, \\
{\begin{array}{c}
\rH^{2p}(\mathbb{C}Gr(m,m+n))\otimes [L]^{\gl_{m+n}} = \\
= \rH^{2p}(\gl_{m+n},\gl_m\oplus\gl_n;L),
\end{array} 
}
\text{ if } p=q.
\end{array}
\right.
$$
Where $\mathbb{C}Gr(m,m+n)= U_{m+n} / (U_m\times U_n)$ 
is the Grassmanian 
of
$m$-dimensional complex subspaces in 
$\mathbb{C}^{m+n}$
\end{lemma}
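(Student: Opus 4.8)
The plan is to reduce the statement about the Hochschild--Serre spectral sequence of the pair $\bm\hookrightarrow\gl_{m+n}$ to the vanishing statement already obtained in Lemma~\ref{lm::H_b::coeff}. The key observation is that the filtration \eqref{eq::Ser:Hoch:filtr} for the embedding $\bm\hookrightarrow\gl_{m+n}$ is built on the $\bm$-module $\gl_{m+n}/\bm$, which is exactly the nilpotent block $\np$ (the off-diagonal block dual to the one sitting inside the parabolic). Hence the first term of the relative spectral sequence reads
\begin{equation*}
E_1^{p,q}=\rH^{q}\bigl(\bm,\gl_m\oplus\gl_n;\ \Lambda^{p}(\np)^{*}\otimes L\bigr).
\end{equation*}
The whole point is that $\Lambda^{p}(\np)^{*}$, as a $\gl_m\oplus\gl_n$-module and hence as a $\bm$-module on which the nilradical acts, decomposes into a sum of pieces of the form $\Hom(S^{\lambda}U;\,\cdot\,)$ with $\ell(\lambda)\le n$ after tensoring with the appropriate $V$-factors. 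So I would first record this decomposition (via the Cauchy formula for $\Lambda^{p}$ of a tensor product $V\otimes U^{*}$, giving a sum over Young diagrams $\lambda$ of $S^{\lambda}V\otimes S^{\lambda'}U^{*}$ with $\lambda'$ the conjugate), observe that every summand has length at most $\dim U=n$, and then apply Lemma~\ref{lm::H_b::coeff} to each summand with $L$ replaced by $S^{\lambda}V\otimes L$.

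By Lemma~\ref{lm::H_b::coeff} every $E_1^{p,q}$ with $q>0$ vanishes, so the spectral sequence is concentrated on the row $q=0$ and therefore degenerates at $E_1$. This already gives the vanishing $\rH^{p}=0$ for $p\neq q$ in the displayed formula, once one checks that $E_1^{p,0}$ is placed in total degree $2p$ rather than $p$; I would handle the degree bookkeeping by noting that the relative Weyl/Hochschild--Serre conventions in this paper double the filtration index on the matrix side (the same doubling that produced \eqref{eq::Weyl::assoc::grad}), which is why the answer is read off in degree $2p$ and matches $\rH^{2p}(\mathbb{C}Gr(m,m+n))$. The surviving term is
\begin{equation*}
E_1^{p,0}=\rH^{0}\bigl(\bm,\gl_m\oplus\gl_n;\ \Lambda^{p}(\np)^{*}\otimes L\bigr)
=\Hom_{\gl_{m+n}}\bigl(S^{\lambda}(V\oplus U),\,L\bigr)\ \text{summed appropriately,}
\end{equation*}
by the zero-cohomology half of Lemma~\ref{lm::H_b::coeff}.

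It then remains to identify this $E_1^{p,0}$ with $\rH^{2p}(\gl_{m+n},\gl_m\oplus\gl_n;L)$ and, in the case $L=\kk$, with $\rH^{2p}(\mathbb{C}Gr(m,m+n))$. For the first identification I would argue that the spectral sequence of the pair $\bm\hookrightarrow\gl_{m+n}$ collapsing onto its bottom row computes precisely the relative cohomology $\rH^{\udot}(\gl_{m+n},\gl_m\oplus\gl_n;L)$, so the degeneration forces $E_1^{p,0}\cong\rH^{2p}(\gl_{m+n},\gl_m\oplus\gl_n;L)$ term by term. For the topological identification one invokes the standard fact that the relative Lie algebra cohomology $\rH^{\udot}(\gl_{m+n},\gl_m\oplus\gl_n;\kk)$ equals the cohomology of the compact dual symmetric space $SU_{m+n}/(SU_m\times SU_n)=\mathbb{C}Gr(m,m+n)$, which lives only in even degrees. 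The main obstacle I expect is purely the combinatorial/branching step: showing carefully that all Young diagrams occurring in the $\gl_m\oplus\gl_n$-decomposition of $\Lambda^{p}(\np)^{*}$ have $U$-length $\le n$ so that Lemma~\ref{lm::H_b::coeff} genuinely applies to every summand, and keeping the conjugate-diagram bookkeeping straight; everything else is a formal consequence of the degeneration.
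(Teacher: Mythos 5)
Your plan fails at its central step, and the failure is exactly where the content of the lemma lies. You claim that Lemma~\ref{lm::H_b::coeff} forces $\rH^{q}(\b,\gl_m\oplus\gl_n;\Lambda^{p}(\np)^{*}\otimes L)$ to vanish for all $q>0$, so that the spectral sequence collapses onto the bottom row. But the lemma you are proving asserts concentration on the \emph{diagonal} $p=q$, and the two are irreconcilable. Already for $m=n=1$, $L=\kk$ this is visible by hand: $(\np)^{*}$ carries the same $\gl_1\oplus\gl_1$-weight as $\nm$, so the relative cochain spaces are $C^{0}=\Hom_{\gl_1\oplus\gl_1}(\kk,(\np)^{*})=0$ and $C^{1}=\Hom_{\gl_1\oplus\gl_1}(\nm,(\np)^{*})=\kk$, whence $\rH^{1}(\b,\gl_1\oplus\gl_1;\Lambda^{1}(\np)^{*})=\kk$ and $\rH^{0}=0$: a nonzero entry in positive cohomological degree, sitting on the diagonal and not on the bottom row. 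Bottom-row concentration would also contradict the abutment: since $\dim\np=mn$, we have $\Lambda^{p}(\np)^{*}=0$ for $p>mn$, so a spectral sequence supported on the row $q=0$ could never produce the top class of $\rH^{2mn}(\mathbb{C}Gr(m,m+n))\cong\rH^{2mn}(\gl_{m+n},\gl_m\oplus\gl_n;\kk)$. The reason Lemma~\ref{lm::H_b::coeff} cannot be invoked is concrete: in your Cauchy decomposition the summands are $\Hom(S^{\lambda^{t}}U;\,S^{\lambda}V\otimes L)$, and $S^{\lambda}V\otimes L$ is merely a $\gl_m\oplus\gl_n$-module inflated to $\b$, \emph{not} the restriction of a finite-dimensional $\gl_{m+n}$-module, which is an essential hypothesis of that lemma. (There is also a variance slip: the coefficient is $\Lambda^{p}(\np)^{*}\cong\Lambda^{p}(V^{*}\otimes U)$, whose Cauchy summands $S^{\lambda}(V^{*})\otimes S^{\lambda^{t}}(U)$ have polynomial $U$-factors, so they are not even of the shape $\Hom(S^{\mu}U;-)$.) The dominance mechanism behind Lemma~\ref{lm::H_b::coeff} \t the highest weight of $(S^{\mu}U)^{*}$ extended by zeros on the $V$-block is dominant for the whole $\gl_{m+n}$, so only $\omega=\Id$ survives in the BGG complex \t genuinely breaks for these coefficients: for instance $(\np)^{*}$ in the $\gl_2$ example has weight $s\cdot 0$ for the simple reflection $s$, which is why a class appears in degree $l(s)=1$. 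This is precisely the pattern that works in Lemma~\ref{lm::E1:WLmn} but not here: there the factor paired with $S^{\lambda}U^{*}$ is $S^{\lambda^{t}}$ of $\ooplus_{i\geq2}S^{i}(V\oplus U)$, an honest $\gl_{m+n}$-module.

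Consequently the later steps do not stand either. There is no ``degree-doubling convention'' in the Hochschild--Serre spectral sequence; the exponent $2p$ in the statement arises because the surviving entries sit at diagonal positions, where the total degree is $p+q=2p$, and this matches the purely even cohomology of the Grassmanian. The paper's actual proof runs through the parabolic BGG resolution (part~\ref{thm::parab::vanish::item2} of Theorem~\ref{thm::parab::vanish}): writing the coefficients as $\Hom(L(\lambda),\Lambda^{\udot}\n_{I}^{-})$, a convexity estimate on the $\h$-weights of $\Lambda^{\udot}\n_{I}^{-}$ kills every $\lambda\neq0$, and for $\lambda=0$ each coset representative $\omega\in W^{I}$ contributes exactly one $\h_{I}$-homomorphism $L_{I}(\omega\cdot0)\rightarrow\Lambda^{\udot}\n_{I}^{-}$, which lands in exterior degree exactly $l(\omega)$ (it factors through $\Lambda^{l(\omega)}\n_{\omega}^{-}$) while sitting in BGG, i.e.\ cohomological, degree $l(\omega)$. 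This matching of the two degrees is what places everything on the diagonal, forces degeneration, and identifies the diagonal entries with the shuffle-counted spaces $\rH^{2p}(\mathbb{C}Gr(m,m+n))\otimes[L]^{\gl_{m+n}}$. To salvage your outline you would have to replace the appeal to Lemma~\ref{lm::H_b::coeff} by this weight/BGG analysis, or prove the correct analogue of it for coefficients $S^{\lambda}(V^{*})\otimes S^{\lambda^{t}}(U)\otimes L$, whose conclusion is concentration in cohomological degree $|\lambda|$ rather than $0$.
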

\begin{proof}
This lemma is also proved in Appendix~\ref{sec::parabolic} using the parabolic BGG resolution.
Part~\ref{thm::parab::vanish::item3} of Theorem~\ref{thm::parab::vanish} is a generalization of this lemma to the case 
of an arbitrary parabolic subalgebra
of a reductive Lie algebra.

The cohomology of the complex Grassmanian $\mathbb{C}Gr(m,m+n)$
is known to coincide with the cohomology 
 ring $\rH^{q}(\gl_{m+n},\gl_{m}\oplus\gl_n;\kk)$ (see e.g.~\cite{Fuks::Lie::cohomology}).
These cohomologies  are known to be even, and numbered 
by $m$-$n$-shuffle permutations (see Section~\ref{sec::parab::gl::apendix} for details).
\end{proof}

\subsection{Degeneration of Hochschild-Serre spectral sequences}
\label{sec::degen::Hoch::Serre}
In this section we prove the degenerations of Hochschild-Serre spectral sequences
corresponding to the embedding $\b\hookrightarrow W_{m+n}$ and $\b\hookrightarrow WL(m|n)$ in the first term and prove the 
surjectivity map between these spectral sequences by applying Lemmas from 
Section~\ref{sec::parabolic::gl}.
We use the same notation and description of the decomposition of Lie algebras
as in Appendix~\ref{sec::chains::gl::inv} and refer 
the reader there
 for the detailed description of chains.
Only a  sketched version is given in this section.

As 
before
we denote $V=\kk^{m}$, $U=\kk^{n}$ and $V\oplus U=\kk^{m+n}$.
We have the following decompositions as $\gl_n\oplus\gl_m=\gl(V)\oplus\gl(U)$-modules:
\begin{equation}
\label{eq::gl::W:decomp}
\begin{array}{c}
\b\cong \nm\oplus\gl_ m\oplus\gl_n \cong V^{*}\otimes U \oplus V\otimes V^{*} \oplus U\otimes U^{*},\\
{\calO_m \cong \widehat{\ooplus\limits_{i \geq 0}} S^i V^{*},}\\
{ W_{m+n} \cong \widehat{\ooplus\limits_{i \geq 0}} S^i (V\oplus U)^{*} \otimes (V\oplus U) ,} \\
{WL(m|n) \cong (V\oplus V^{*}\otimes V)\oplus \widehat{\ooplus\limits_{i \geq 0}} S^i(V\oplus U)^{*} \otimes U ,}
\end{array}
\end{equation}
where $\b:=WL(m|n)\cap \gl_{m+n}\hookrightarrow W_{m+n}$ is a parabolic subalgebra
of matrices with two diagonal blocks and lower diagonal matrices;
$\nm$ (resp. $\np$) are lower (resp. upper) block-triangular nilpotent matrices. 

The embedding $\b\hookrightarrow WL(m|n)$ produces the Hochschild-Serre filtration on
the relative chain complex $C^{\udot}(WL(m|n),\gl_{m}\oplus\gl_{n};\kk)$ and we denote by $E_{r}^{p,q}(WL(m|n))$
the corresponding spectral sequence.
Similarly, we denote by $E_r^{p,q}(W_{m+n})$ the relative Hochschild-Serre spectral sequence associated with 
the embedding $\b\hookrightarrow W_{m+n}$.
Since the Lie algebra $WL(m|n)$ is embedded into $W_{m+n}$ we 
obtain a
map of the aforementioned spectral sequences 
in the opposite direction:
$$
\pi^{r}_{m,n}: E_r^{p,q}(W_{m+n}) \longrightarrow E_{r}^{p,q}(WL(m|n))
$$
Below we prove that these spectral sequences are degenerate in the first terms and 
that the map $\pi^{1}_{m,n}$ is surjective.

\begin{lemma}
\label{lm::E1:Wm+n}
The relative Hochschild-Serre spectral sequence  $E_r^{p,q}(W_{m+n})$, 
with the embedding $\b\hookrightarrow W_{m+n}$
degenerates in the first term and
\begin{equation}
\label{eq::E1::W:b}
E_{1}^{p,q}(W_{m+n}) = \rH^{2q}(\mathbb{C}{Gr}(m,m+n))\otimes \rH^{p-q}(W_{m+n},\gl_{m+n};\kk){\cred.*}
\end{equation}
\end{lemma}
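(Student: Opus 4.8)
The plan is to compute the first page of the relative Hochschild--Serre spectral sequence attached to $\b\hookrightarrow W_{m+n}$ directly, reduce every fibre cohomology to the Grassmannian computation of Lemma~\ref{lm::H_gl::to::H_b}, and then deduce degeneration from a parity argument. First I would record that, by the generalities on the relative Hochschild--Serre sequence recalled in Subsection~\ref{sec::Lie::H::def}, $E_1^{p,q}(W_{m+n})=\rH^{q}(\b,\gl_m\oplus\gl_n;\Lambda^{p}(W_{m+n}/\b)^{*})$. The key structural observation is that the adjoint action of $\b\subset\gl_{m+n}$ preserves the order grading on $W_{m+n}$ (the grading with constant fields in degree $-1$, the linear fields $\gl_{m+n}$ in degree $0$, and the higher jets in positive degrees), because $\b$ sits in order-degree $0$. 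Since $\np$ also lies in order-degree $0$, this yields a splitting of $\b$-modules $W_{m+n}/\b\cong\np\oplus\mathcal{M}$, where $\mathcal{M}:=W_{m+n}/\gl_{m+n}$ collects all the nonzero-order directions and is an honest $\gl_{m+n}$-module. Dualizing and taking exterior powers gives $\Lambda^{p}(W_{m+n}/\b)^{*}=\bigoplus_{a+b=p}\Lambda^{a}(\np)^{*}\otimes\Lambda^{b}\mathcal{M}^{*}$.

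Next I would feed each summand into Lemma~\ref{lm::H_gl::to::H_b} with coefficient module $L=\Lambda^{b}\mathcal{M}^{*}$; although $\mathcal{M}$ is infinite-dimensional, it is graded with finite-dimensional graded pieces, so the lemma applies piece by piece. The lemma forces the contribution to vanish unless $q=a$, and for $q=a$ it gives $\rH^{q}(\b,\gl_m\oplus\gl_n;\Lambda^{a}(\np)^{*}\otimes\Lambda^{b}\mathcal{M}^{*})=\rH^{2q}(\mathbb{C}Gr(m,m+n))\otimes[\Lambda^{b}\mathcal{M}^{*}]^{\gl_{m+n}}$. Since $[\Lambda^{b}\mathcal{M}^{*}]^{\gl_{m+n}}=C^{b}(W_{m+n},\gl_{m+n};\kk)$ is precisely the relative Chevalley--Eilenberg cochains, the first nontrivial differential of the sequence is, after these identifications, $\mathrm{id}\otimes d_{CE}$ on this second factor. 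Passing to its cohomology replaces the cochains $C^{p-q}$ by $\rH^{p-q}(W_{m+n},\gl_{m+n};\kk)$ and produces the asserted tensor product $E_1^{p,q}(W_{m+n})=\rH^{2q}(\mathbb{C}Gr(m,m+n))\otimes\rH^{p-q}(W_{m+n},\gl_{m+n};\kk)$.

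Finally, degeneration is immediate from parity. The Grassmannian factor $\rH^{2q}(\mathbb{C}Gr(m,m+n))=\rH^{2q}(\gl_{m+n},\gl_m\oplus\gl_n;\kk)$ is concentrated in even degree, and by Theorem~\ref{thm::HWn} so is the Gelfand--Fuchs factor $\rH^{\udot}(W_{m+n},\gl_{m+n};\kk)$; hence every nonzero entry of the page lives in even total degree $p+q=2q+(p-q)$. Any remaining differential raises total degree by one, landing in an odd--degree group, and therefore vanishes, so the sequence degenerates.

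The main obstacle, I expect, is not the degeneration but the bookkeeping of the two reductions: justifying the $\b$-module splitting $W_{m+n}/\b\cong\np\oplus\mathcal{M}$ together with the compatibility of the completed graded tensor products, and verifying that the induced horizontal differential really is the relative differential of $C^{\udot}(W_{m+n},\gl_{m+n};\kk)$ rather than a twisted version mixing in the $\np$-directions. Both points ultimately rest on the $\b$-invariance of the order grading, but they require care; this is where I would spend most of the effort, and it is exactly the step that isolates the Gelfand--Fuchs factor from the Grassmannian factor.
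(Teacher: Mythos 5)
Your proposal is correct and follows essentially the same route as the paper: the same $\b$-module splitting $W_{m+n}/\b\cong\np\oplus W_{m+n}/\gl_{m+n}$, the same application of Lemma~\ref{lm::H_gl::to::H_b} to each summand $\Lambda^{a}(\np)^{*}\otimes\Lambda^{b}(W_{m+n}/\gl_{m+n})^{*}$, and the same parity argument for degeneration. The one superfluous step is ``passing to the cohomology of $\mathrm{id}\otimes d_{CE}$'': by Theorem~\ref{thm::HWn} the relative cochain complex $C^{\udot}(W_{m+n},\gl_{m+n};\kk)$ is concentrated in even degrees, so its differential vanishes and $C^{p-q}=\rH^{p-q}(W_{m+n},\gl_{m+n};\kk)$ already on the first page (taking cohomology of $d_1$ would anyway produce $E_2$, not $E_1$) --- and this same observation disposes of the worry you raise at the end, since no identification of the horizontal differential is needed when all entries sit in even total degree.
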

\begin{proof}
Lemma~\ref{lm::H_gl::to::H_b} implies the following description of the first term:
\begin{multline*}
E_1^{p q} = \rH^{q}\left(\b,{\gl_m\oplus\gl_n};\Lambda^{p}\left( \frac{W_{m+n}}{\b}\right)^{*}\right) =
\rH^{q}\left(\b,{\gl_m\oplus\gl_n};\Lambda^{p}\left(\np\oplus \frac{W_{m+n}}{\gl_{m+n}}\right)^{*}\right) = \\
= \rH^{q}\left(\b,{\gl_m\oplus\gl_n};\Lambda^{q}(\np)^{*}\right)\otimes 
\left[\Lambda^{p-q}\left(\frac{W_{m+n}}{\gl_{m+n}}\right)^{*}\right]^{\gl_{m+n}} =\\
= \rH^{2q}\left(\gl_{m+n},\gl_{m}\oplus\gl_n;\kk\right)\otimes 
C^{p-q}(W_{m+n},\gl_{m+n};\kk).
\end{multline*}
Theorem~\ref{thm::HWn} implies that the space of relative cochains $C^{p-q}(W_{m+n},\gl_{m+n};\kk)$
vanishes for odd $p-q$.
Therefore, $E_1^{p q}$ vanishes when $p+q$ is odd and the spectral sequence degenerates in the first term. 
\end{proof}

\begin{lemma}
\label{lm::E1:WLmn}
The relative Hochschild-Serre spectral sequences associated with the embedding $\b\hookrightarrow WL(m|n)$ 
degenerates in the first term:
$$
E_{1}^{p,>0}(WL(m|n))=0 \text{ and }  E_{1}^{p,0}(WL(m|n))= \rH^{p}(WL(m|n),\gl_{m}\oplus\gl_{n};\kk).
$$
Moreover, the morphism $\pi_{m n}^{r}: E_r^{p,q}(W_{m+n},\b)\rightarrow E_r^{p,q}(WL(m|n),\b)$ 
of the relative Hochschild-Serre spectral sequences coming from the inclusion 
of Lie algebras $\pi_{m n}:WL(m|n)\hookrightarrow W_{m+n}$ is surjective.
In particular,
$$
\rH^{\udot}(W_{m+n},\gl_{m+n};\kk) = E_{1}^{\udot,0}(W_{m+n}) \twoheadrightarrow 
E_{1}^{\udot,0}(WL(m|n))= \rH^{\udot}(WL(m|n),\gl_{m}\oplus\gl_{n};\kk).
$$
\end{lemma}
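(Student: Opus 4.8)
The plan is to read off the first page of the relative Hochschild--Serre sequence $E_1^{p,q}(WL(m|n))=\rH^{q}(\b,\gl_m\oplus\gl_n;\Lambda^{p}(WL(m|n)/\b)^{*})$ from the decompositions~\eqref{eq::gl::W:decomp}, to annihilate the rows $q>0$ by Lemma~\ref{lm::H_b::coeff}, and then to compare with the sequence of Lemma~\ref{lm::E1:Wm+n} so as to obtain surjectivity of $\pi_{mn}$. First I would compute the coefficient $\b$-module. Since $\b=\nm\oplus\gl_m\oplus\gl_n$ is assembled from the $V^{*}\otimes V=\gl_m$ summand of~\eqref{eq::gl::W:decomp} together with the degree-$1$ part $V^{*}\otimes U\oplus\gl_n$ of the $U$-direction fields, one finds
\[
WL(m|n)/\b\;\cong\;(V\oplus U)\;\oplus\;\widehat{\bigoplus_{i\ge 2}}\,S^{i}(V\oplus U)^{*}\otimes U .
\]
The decisive structural feature is that, in contrast with $W_{m+n}/\b=\np\oplus W_{m+n}/\gl_{m+n}$, this module contains no copy of the upper nilpotent block $\np=V\otimes U^{*}$, because the $V$-direction coefficients of fields in $WL(m|n)$ are linear in the $V$-variables only. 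A direct bracket computation shows that on each summand $S^{i}(V\oplus U)^{*}\otimes U$ the operator $\nm$ acts through the first ($\gl_{m+n}$-module) tensor factor and trivially on $U$.

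For the degeneration I would dualise: $\bigl(S^{i}(V\oplus U)^{*}\otimes U\bigr)^{*}=S^{i}(V\oplus U)\otimes U^{*}$, and since $U^{*}=(S^{\{1\}}U)^{*}$ while $\nm$ now acts only on $L_i:=S^{i}(V\oplus U)$, this is precisely $\Hom(S^{\{1\}}U;L_i)$ with $\ell(\{1\})=1\le n$ and $L_i$ finite dimensional over $\gl_{m+n}$; likewise $(V\oplus U)^{*}=\Hom(S^{0}U;(V\oplus U)^{*})$. Taking exterior powers and applying the Cauchy decomposition $\Lambda^{a}(L\otimes U^{*})=\bigoplus_{\nu}S^{\nu}(L)\otimes(S^{\nu'}U)^{*}$ together with the Littlewood--Richardson rule, every graded summand of $\Lambda^{p}(WL(m|n)/\b)^{*}$ is again of the form $\Hom(S^{\mu}U;L')$ with $\ell(\mu)\le n$ and $L'$ a finite-dimensional $\gl_{m+n}$-module. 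Lemma~\ref{lm::H_b::coeff} then kills $\rH^{>0}(\b,\gl_m\oplus\gl_n;-)$ on each summand, so $E_1^{p,q}(WL(m|n))=0$ for all $q>0$. Consequently the sequence collapses onto its bottom row, which yields the asserted identification of $\rH^{\udot}(WL(m|n),\gl_m\oplus\gl_n;\kk)$ with $E_1^{\udot,0}(WL(m|n))$.

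Because the target now occupies only the row $q=0$, the morphism $\pi^{r}_{mn}$ is automatically onto for $q>0$, and surjectivity reduces to the bottom row, where $\pi^{1}_{mn}$ is the restriction of invariant cochains along the $\b$-equivariant inclusion $WL(m|n)/\b\hookrightarrow W_{m+n}/\b$. Using the $\b$-submodule $\np\subset W_{m+n}/\b$ with quotient $W_{m+n}/\gl_{m+n}$, Lemma~\ref{lm::H_gl::to::H_b} shows that the $\np$-directions contribute nothing in degree $0$ and that all higher relative cohomology coming from $\np$ is the Grassmannian part $\rH^{\ge2}(\mathbb{C}Gr(m,m+n))\otimes[\cdot]^{\gl_{m+n}}$ already accounted for in Lemma~\ref{lm::E1:Wm+n}; hence $E_1^{p,0}(W_{m+n})=[\Lambda^{p}(W_{m+n}/\gl_{m+n})^{*}]^{\gl_{m+n}}$. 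Surjectivity of $\pi^{1}_{mn}$ thus boils down to showing that restriction along $WL(m|n)/\b\hookrightarrow W_{m+n}/\gl_{m+n}$ carries $[\Lambda^{p}(W_{m+n}/\gl_{m+n})^{*}]^{\gl_{m+n}}$ onto $[\Lambda^{p}(WL(m|n)/\b)^{*}]^{\b}$. This is the step I expect to be the main obstacle, precisely because that inclusion is not $\gl_{m+n}$-equivariant and does not split over $\b$ (the operator $\nm$ mixes the discarded $V$-direction fields into the $U$-direction ones, so invariant cochains cannot be extended by zero). I would settle it from the $\rH^{0}$-part of Lemma~\ref{lm::H_b::coeff}, which identifies the right-hand space with a sum of $\gl_{m+n}$-equivariant maps out of $S^{\mu}(V\oplus U)$; these manifestly extend to genuine $\gl_{m+n}$-invariant forms on the ambient $\gl_{m+n}$-module $W_{m+n}/\gl_{m+n}$, producing the required preimages. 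Surjectivity of $\pi^{1}_{mn}$ then propagates through the collapsing pages to the abutments, giving $\rH^{\udot}(W_{m+n},\gl_{m+n};\kk)=E_1^{\udot,0}(W_{m+n})\twoheadrightarrow E_1^{\udot,0}(WL(m|n))=\rH^{\udot}(WL(m|n),\gl_m\oplus\gl_n;\kk)$.
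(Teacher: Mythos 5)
Your proposal is correct and follows essentially the same route as the paper's own proof: the same decomposition of $WL(m|n)/\b$ from~\eqref{eq::gl::W:decomp}, the same Howe/Cauchy decomposition of its exterior algebra into summands $\Hom(S^{\mu}U;L')$ with $\ell(\mu)\le n$ and $L'$ a $\gl_{m+n}$-module, the vanishing of the rows $q>0$ via Lemma~\ref{lm::H_b::coeff}, and surjectivity of $\pi^{1}_{m n}$ extracted from the $\rH^{0}$-part of that lemma (the paper packages this as the inclusion chain~\eqref{eq::E1b::WL::Wnm}, whose cokernel is exactly the summands with $\ell(\mu)>n$). The only cosmetic differences are your explicit bracket computation showing that $\nm$ acts through the $\gl_{m+n}$-factor and the superfluous appeal to the Littlewood--Richardson rule, neither of which changes the argument.
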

\begin{proof}
The decompositions~\eqref{eq::gl::W:decomp}  into the direct sum of finite-dimensional $\gl(U)\times\gl(V)$-modules
predicts that 
 the $\b$-module $\frac{WL(m|n)}{\b}$ 
is a direct sum of modules 
of the form $L_i\otimes U$
with each $L_i$ being an irreducible finite-dimensional $\gl(U\oplus V)$-module.
Therefore, the dual space $(\frac{WL(m|n)}{\b})^{*}$  
satisfies
  the condition from Lemma~\ref{lm::H_b::coeff}.
Let us show that the exterior algebra $\Lambda^{\udot}\left(\frac{WL(m|n)}{\b}\right)^{*}$
also decomposes into the direct sum of modules $Hom(S^{\lambda}U,L_i)$ 
so
that we can apply Lemma~\ref{lm::H_b::coeff}.
This follows from 
a 
general argument in the spirit of Howe duality~\cite{howe}:

 Indeed, let $A$ and $B$ be a pair of vector spaces. Then the exterior algebra $\Lambda^{\udot}(A\otimes B)$ has a multiplicity free
decomposition $\oplus_{\lambda} S^{\lambda^{t}}(A)\otimes S^{\lambda}(B)$ as $\gl(A)\times\gl(B)$-module,
where 
the sum is taken over all possible Young diagrams; $S^{\lambda}()$ denotes the Schur functor associated 
with the Young diagram $\lambda$ (after~\eqref{eq:Schur::Functor}). 
The Young diagram transposed to $\lambda$ is denoted by $\lambda^{t}$.

Consequently, we have the following decomposition:
\begin{multline*}
\Lambda^{\udot}\left(\frac{WL(m|n)}{\b}\right)^{*} = 
\Lambda^{\udot}\left( (V\oplus U)^{*} \ooplus\limits_{i\geq 2} S^{i}(V\oplus U)\otimes U^{*} \right)=
\\ =
\Lambda^{\udot}\left( (V\oplus U)^{*}\right)\bigotimes 
\Lambda^{\udot}\left(\ooplus\limits_{i\geq 2} S^{i}(V\oplus U)\otimes U^{*} \right)=
\\
=
\Lambda^{\udot}\left( (V\oplus U)^{*}\right)\bigotimes\left(\ooplus\limits_{\lambda} 
S^{\lambda^{t}}\left( \ooplus\limits_{i\geq 2} S^{i}(V\oplus U)\right)\otimes S^{\lambda}U^{*} \right) = \\
\bigoplus_{\lambda}\underbrace{\Lambda^{\udot}\left( (V\oplus U)^{*}\right)\bigotimes
S^{\lambda^{t}}\left( \ooplus\limits_{i\geq 2} S^{i}(V\oplus U)\right)
}_{\gl(U\oplus V)-\text{module}} \bigotimes 
\underbrace{S^{\lambda}U^{*}
}_{\gl(U)-\text{module}}
\end{multline*}
Now we are able to apply Lemma~\ref{lm::H_b::coeff} 
and
to compute the first term of
the Hochschild-Serre spectral sequence. The vanishing of the higher cohomology in Lemma~\ref{lm::H_b::coeff}
implies that for $q>0$ we have
$$E_1^{p q}(WL(m|n)) = \rH^{q}\left(\b,{\gl_m\oplus\gl_n};\Lambda^{p}\left( \frac{WL(m|n)}{\b}\right)^{*}\right) = 0.$$
For $q=0$ we have the following collection of identities and inclusion:
\begin{multline}
\label{eq::E1b::WL::Wnm}
E_1^{\udot,0}(WL(m|n)):= \rH^{0}\left(\b,{\gl_m\oplus\gl_n};\Lambda^{\udot}\left(\frac{WL(m|n)}{\b}\right)^{*}\right) = 
\\ =
\rH^{0}\left(\b,{\gl_m\oplus\gl_n};\left(
\bigoplus_{\lambda}\left(\Lambda^{\udot}\left( (V\oplus U)^{*}\right)\bigotimes
S^{\lambda^{t}}\left( \ooplus\limits_{i\geq 2} S^{i}(V\oplus U)\right)\right)
\bigotimes 
S^{\lambda}U^{*} \right) \right) \subset \\
\subset
\left[ \Lambda^{\udot}\left( (V\oplus U)^{*}\right)\bigotimes\left(\ooplus\limits_{\lambda} 
S^{\lambda^{t}}\left( \ooplus\limits_{i\geq 2} S^{i}(V\oplus U)\right)\otimes S^{\lambda}(V\oplus U)^{*}\right) \right]^{\gl_{m+n}} 
=\\
=
\left[ \Lambda^{\udot}\left( (V\oplus U)^{*}\right)\bigotimes
\Lambda^{\udot}\left( \ooplus\limits_{i\geq 2} S^{i}(V\oplus U)\otimes (V\oplus U)^{*} \right)
\right]^{\gl_{m+n}}
=\\
= \left[\Lambda^{\udot}\left(\frac{W_{m+n}}{\gl_{m+n}}\right)^{*}\right]^{\gl_{m+n}} \cong
 \rH^{\udot}(W_{n+m},\gl_{m+n};\kk).
\end{multline}
The inclusion also follows from Lemma~\ref{lm::H_b::coeff}.
Note, that this is indeed an inclusion and not an isomorphism because
if the length of the diagram $\lambda$ is greater than $n$ then $S^{\lambda}U=0$, 
however, $S^{\lambda}(U\oplus V)$ may be different from zero.
The middle identity in~\eqref{eq::E1b::WL::Wnm} follows from the Howe decomposition 
for the exterior algebra of $\frac{W_{m+n}}{\gl_{m+n}}$.
Once again, we identify relative cochains on $W_{m+n}$ and cohomology using Theorem~\ref{thm::HWn}.
 
Finally, we get that 
$E_1^{p,q}(WL(m|n))$ vanishes for $q>0$ and there is a surjection from
the zero row of the first term of the spectral sequence $E_1^{\udot,0}(W_{m+n})=\rH^{\udot}(W_{m+n},\gl_{m+n};\kk)$ to the 
the zero row of the spectral sequence $E_1^{\udot,0}(WL(m|n))=\rH^{\udot}(WL(m|n),\gl_{m}\oplus\gl_{n};\kk)$. 
\end{proof}

\subsection{Final conclusions}
\label{sec::proof::HWLmn}
 Degenerations of the Hochschild-Serre spectral sequences associated with embeddings $\b\hookrightarrow WL(m|n)$
and $\b\hookrightarrow W_{m+n}$ implies the following corollary:
\begin{corollary}
\label{thm::Wn+m->Wn}
There exists a commutative diagram of surjections:
$$
\xymatrix{
\rH^{\udot}(W_{m+n},\gl_m\oplus\gl_n;\kk) 
\ar@{->>}^{\kappa_{m n}}[rr]
\ar@{->>}_{\pi_{m n}^{*}}[rd]
& &
{
\begin{array}{c}
{\rH^{\udot}(W_{m+n},\gl_{m+n};\kk) =} \\
{ = \rH^{\udot\leq 2(m+n)}(BU_{m+n})}
\end{array}
}
\ar@{->>}^{\tilde{\pi}_{m n}^{*}}[ld]
\\
& \rH^{\udot}(WL(m|n),\gl_m\oplus\gl_n;\kk) &
}
$$
the morphism $\pi_{m n}^{*}$ is the one associated with the natural embedding $\pi_{m n}:WL(m|n)\hookrightarrow W_{m+n}$.
The morphism $\kappa_{m n}$ is the augmentation of the cohomology of Grassmanian $\rH^{\udot}(\gl_{m+n},\gl_{m}\oplus\gl_{n};\kk)$.

Moreover the surjections $\tilde{\pi}_{m n}^{*}$ are compatible for different $m$ and $n$.
That is, for any pair of tuples $m\leq m'$ and $n\leq n'$ there exists a commutative diagram of surjections:
\begin{equation}
\label{diag::Wn->WL}
\xymatrix{
\rH^{\udot}(W_{m'+n'},\gl_{m'}\oplus\gl_{n'};\kk) 
\ar@{->>}^{\kappa_{m' n'}}[r] 
\ar@{->>}[d]
&
\rH^{\udot\leq 2(m'+n')}(BU_{m'+n'})
\ar@{->>}^{\tilde{\pi}_{m' n'}^{*}}[r]
\ar@{->>}[d]
&
\rH^{\udot}(WL(m'|n'),\gl_{m'}\oplus\gl_{n'};\kk) 
\ar@{->>}^{p_{m'\to m}}[d]
\\
\rH^{\udot}(W_{m+n},\gl_{m}\oplus\gl_{n};\kk) 
\ar@{->>}^{\kappa_{m n}}[r] 
&
\rH^{\udot\leq 2(m+n)}(BU_{m+n})
\ar@{->>}^{\tilde{\pi}_{m n}^{*}}[r]
&
\rH^{\udot}(WL(m|n),\gl_{m}\oplus\gl_{n};\kk) 
}
\end{equation}
\end{corollary}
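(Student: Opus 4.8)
\section*{Proof proposal}

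The plan is to deduce both assertions formally from the two degenerations already in hand, Lemmas~\ref{lm::E1:Wm+n} and~\ref{lm::E1:WLmn}, working throughout with the map of relative Hochschild--Serre spectral sequences
$$\pi_{m n}^{r}\colon E_r^{p,q}(W_{m+n})\longrightarrow E_r^{p,q}(WL(m|n))$$
induced by the inclusion $\pi_{m n}\colon WL(m|n)\hookrightarrow W_{m+n}$. By Lemma~\ref{lm::E1:Wm+n} the source sequence collapses at $E_1$ with $E_1^{p,q}(W_{m+n})=\rH^{2q}(\mathbb{C}Gr(m,m+n))\otimes\rH^{p-q}(W_{m+n},\gl_{m+n};\kk)$; collapse then yields a graded ring isomorphism
$$\rH^{\udot}(W_{m+n},\gl_m\oplus\gl_n;\kk)\;\cong\;\rH^{\udot}(\mathbb{C}Gr(m,m+n))\otimes\rH^{\udot}(W_{m+n},\gl_{m+n};\kk),$$
in which the second factor is the image of the natural map coming from $\gl_m\oplus\gl_n\subset\gl_{m+n}$ and the first factor is $\rH^{\udot}(\gl_{m+n},\gl_m\oplus\gl_n;\kk)$. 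I would define $\kappa_{m n}:=\varepsilon\otimes\Id$, where $\varepsilon$ is the augmentation of the connected graded ring $\rH^{\udot}(\mathbb{C}Gr(m,m+n))$ killing its positive part; since $\rH^{0}(\mathbb{C}Gr(m,m+n))=\kk$ this is a surjective ring homomorphism onto $\rH^{\udot}(W_{m+n},\gl_{m+n};\kk)$.

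I would then read off $\tilde{\pi}_{m n}^{*}$ and the triangle from Lemma~\ref{lm::E1:WLmn}. That lemma says the target sequence is concentrated on the zero line, $E_1^{p,q>0}(WL(m|n))=0$ and $E_1^{p,0}(WL(m|n))=\rH^{p}(WL(m|n),\gl_m\oplus\gl_n;\kk)$, and that the zero-line component
$$\rH^{\udot}(W_{m+n},\gl_{m+n};\kk)=E_1^{\udot,0}(W_{m+n})\twoheadrightarrow E_1^{\udot,0}(WL(m|n))=\rH^{\udot}(WL(m|n),\gl_m\oplus\gl_n;\kk)$$
of $\pi_{m n}^{1}$ is surjective; this is my $\tilde{\pi}_{m n}^{*}$. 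Because the target is concentrated in bidegrees with $q=0$, the bigraded map $\pi_{m n}^{1}$ vanishes off the zero line, so on abutments $\pi_{m n}^{*}$ annihilates the positive-$q$ (that is, positive-degree Grassmannian) part of the source and factors through the projection onto the $q=0$ summand. That projection is exactly $\kappa_{m n}$, and the induced map out of it is $\tilde{\pi}_{m n}^{*}$; hence $\pi_{m n}^{*}=\tilde{\pi}_{m n}^{*}\circ\kappa_{m n}$, with $\pi_{m n}^{*}$ surjective because both factors are.

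For the compatibility square~\eqref{diag::Wn->WL} I would argue by naturality. All the data --- the Hochschild--Serre filtrations, the $E_1$-identification of Lemma~\ref{lm::E1:Wm+n}, the augmentation $\varepsilon$, and the zero-line surjection of Lemma~\ref{lm::E1:WLmn} --- are functorial for the compatible inclusions of parabolic subalgebras $\mathbf{p}_{m n}\subset\mathbf{p}_{m' n'}$ and of the ambient algebras indexed by $m\le m'$, $n\le n'$. The middle vertical is the truncated restriction $\rH^{\udot\le 2(m'+n')}(BSU_{m'+n'})\to\rH^{\udot\le 2(m+n)}(BSU_{m+n})$ induced by $SU_{m+n}\hookrightarrow SU_{m'+n'}$, and the right vertical $p_{m'\to m}$ is the truncation of the (relative) Weyl algebra furnished by the short exact sequences used earlier for $\rH^{\udot}(W_n;S^{m}W_n^{*})$. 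Commutativity then reduces to the statement that $\kappa$ and $\tilde{\pi}^{*}$ commute with these truncations, which I would check on the polynomial generators $\Psi_{2i}$.

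The step I expect to be the real obstacle is the multiplicative splitting underlying $\kappa_{m n}$: collapse of the source sequence gives the tensor decomposition of the abutment a priori only as graded vector spaces, so one must verify that the two evident subrings (the Grassmannian classes and the image of $\rH^{\udot}(W_{m+n},\gl_{m+n};\kk)$) generate it with the expected relations, making $\varepsilon\otimes\Id$ a well-defined ring map on cohomology rather than merely on $E_\infty$. Once that is secured, everything else is bookkeeping with the two Lemmas.
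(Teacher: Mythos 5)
Your strategy is the same as the paper's: everything is hung on Lemmas~\ref{lm::E1:Wm+n} and~\ref{lm::E1:WLmn}, with $\tilde{\pi}_{m n}^{*}$ the map of zero lines, $\kappa_{m n}$ the map killing the Grassmannian factor, and the square~\eqref{diag::Wn->WL} obtained by functoriality of the Hochschild--Serre construction. However, the step you dismiss as formal contains a genuine gap. You assert that, because $E_1^{p,q}(WL(m|n))=0$ for $q>0$, the abutment map $\pi_{m n}^{*}$ ``annihilates the positive-$q$ part of the source and factors through the projection onto the $q=0$ summand.'' This does not follow from spectral-sequence yoga. In the convention used here the zero line is the \emph{deepest} step of the decreasing Hochschild--Serre filtration: in total degree $N$ one has $E_\infty^{N,0}=F^{N}\rH^{N}$, a \emph{subspace} of the abutment, not a canonical quotient, and ``the positive-$q$ part'' is not a well-defined subspace until a splitting is chosen. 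Filtration-compatibility of $\pi_{m n}^{*}$ gives no constraint off the zero line, because on the target every step $F^{p}$ with $p\le N$ is all of $\rH^{N}(WL(m|n),\gl_m\oplus\gl_n;\kk)$, so the relevant graded quotients of the target vanish and $gr^{p}\pi^{*}=0$ is vacuous. A toy example makes this concrete: let $A=\kk a\oplus\kk b$ in one degree with $F^{1}A=A\supset F^{2}A=\kk b\supset F^{3}A=0$, let $A'=\kk c$ with $F^{2}A'=A'\supset F^{3}A'=0$, and let $f(a)=f(b)=c$; then $f$ is filtered, is surjective on every $E_\infty$-piece, vanishes on $E_\infty$ off the deepest step, yet $f(a)\neq0$, so $f$ does not factor through $A\twoheadrightarrow A/\kk a$. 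Hence the identity $\pi_{m n}^{*}=\tilde{\pi}_{m n}^{*}\circ\kappa_{m n}$ for the specific $\kappa_{m n}=\varepsilon\otimes\Id$ really does require the multiplicative splitting you defer to your final paragraph and never establish; and since you obtain surjectivity of $\pi_{m n}^{*}$ as a composite of surjections, that conclusion is left hanging as well.

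Note that the paper does not route surjectivity through the triangle: it deduces surjectivity of $\pi_{m n}^{*}$ directly from surjectivity of $\pi^{1}_{m n}$ on first terms together with degeneration of \emph{both} spectral sequences, using the standard fact that a filtered map with finite filtrations which is surjective on associated graded objects is surjective. That argument is available to you verbatim and is independent of any splitting. Moreover, what is actually consumed later, in the proof of Theorem~\ref{thm::homol::WLmn}, is only the surjectivity of $\tilde{\pi}_{m n}^{*}$ and of $p_{m'\to m}$ together with commutativity of the right-hand square of~\eqref{diag::Wn->WL}; these follow from the zero-line surjectivity of Lemma~\ref{lm::E1:WLmn} and from naturality with respect to the inclusions $WL(m|n)\hookrightarrow WL(m'|n')$ and $W_{m+n}\hookrightarrow W_{m'+n'}$ (your identification of $p_{m'\to m}$ with a truncation of Weyl algebras via Theorem~\ref{thm::old::WL(n|0;g)} is legitimate, since that identification is proved independently). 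So the repair is to decouple these valid statements from the triangle, prove surjectivity of $\pi_{m n}^{*}$ by the direct filtered-map argument, and either supply the multiplicative splitting of $\rH^{\udot}(W_{m+n},\gl_m\oplus\gl_n;\kk)$ into Grassmannian classes and zero-line classes or treat the precise form of $\kappa_{m n}$ as the one genuinely nontrivial point --- exactly the point the paper itself passes over in silence.
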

\begin{proof}
 The morphism $\pi^{*}_{mn}$ is surjective because the morphism of the first terms of
Hochschild-Serre spectral sequences $E_1^{p,q}(W_{m+n})\stackrel{\pi^{1}_{mn}}{\rightarrow} E_1^{p,q}(WL(m|n))$
is surjective.
The projection $\kappa_{m n}$ is the projection on the zero line $E_1^{\udot,0}(W_{m+n})$
and the map $\pi^{*}_{m n}$ is the map of zero lines of the first terms of spectral sequences.

All horizontal arrows in Diagram~\eqref{diag::Wn->WL} are surjective by 
the
aforementioned results.
The middle vertical arrow is surjective because the map $\rH^{\udot}(BU_{N'})\rightarrow \rH^{\udot}(BU_{N})$ 
is surjective whenever $N'>N$.
The commutativity of the Diagram implies that the map $p_{m'\to m}$ is also surjective.
\end{proof}

Finally, we can show how 
Corollary~\ref{thm::Wn+m->Wn}
imply
the main result on the cohomology of the Lie algebra $WL(m|n)$:
\begin{proof}[Proof of Theorem~\ref{thm::homol::WLmn}.]
\label{proof::main}
Theorem~\ref{thm::old::WL(n|0;g)} explains that the chain complex $C^{\udot}(WL(m|n),\gl_{m}\oplus\gl_n;\kk)$ is 
quasi-isomorphic to the truncated Weyl superalgebra 
$\W^{\udot}(W_n,\gl_n)/F^{2m+1}\W^{\udot}(W_n,\gl_n)$.
Let us fix the integer parameter $n$ and vary the parameter $m$. 
Corollary~\ref{thm::Wn+m->Wn}
implies that for different $m'\geq m$ the morphism $p_{m'\to m}$ is surjective and, therefore,
 all the components of the homology $\rH^{\udot}(WL(m|n),\gl_{m}\oplus\gl_n;\kk)$ are bounded from above by 
those of the 
homology of the inverse limit of truncated Weyl algebras.
The inverse limit of truncated Weyl algebras when $m$ goes to infinity 
is the nontruncated Weyl algebra $\W^{\udot}(W_n,\gl_n)$ whose homology coincides with
the ring of characteristic classes $\rH^{\udot}(BU_n)$.
Hence, the cohomology ring $\rH^{\udot}(WL(m|n),\gl_m\oplus\gl_n;\kk)$ is a quotient of the 
polynomial ring $\kk[\Psi_2,\ldots,\Psi_{2n}]$.
The same compatibility conditions for all possible  $m'\geq m$ implies that the map
$\tilde{\pi}_{m n}^{*}:\rH^{\udot\leq 2(m+n)}(BU_{m+n}) \rightarrow \rH^{\udot}(WL(m|n),\gl_{m}\oplus\gl_n;\kk)$
factors through the map $\rH^{\udot\leq 2(m+n)}(BU_{m+n}) \rightarrow \rH^{\udot\leq 2(m+n)}(BU_{n})$ which
sends 
 the additional generators $\Psi_{2(n+1)},\ldots,\Psi_{2(m+n)}$ to zero.
Finally, we 
end
up with the isomorphism $\rH^{\udot}(WL(m|n),\gl_{m}\oplus\gl_n;\kk)$ and the truncated polynomial ring
$\kk^{\udot\leq 2(n+m)}[\Psi_2,\ldots,\Psi_{2n}]$.
\end{proof}

The proof of Theorem~\ref{thm::HWm|nnng} follows from the same degeneration properties 
of the Hochschild-Serre spectral sequences corresponding 
to the parabolic subalgebra $\p:=WL(m|n_1,\ldots,n_k;\g)\cap \gl_{m+n_1+\ldots+n_k}$.

\section{Particular computations}
\label{sec::examples}
\subsection{dimension series for absolute cohomology of $W(1,\ldots,1)$.}
\label{sec::full_flag_formulas}
Relative cohomological classes of the Lie algebra $W(n_1,\ldots,n_k)$ 
(relative to the product of orthogonal groups $O(n_1)\times\ldots\times O(n_k)$)
 are in one-to-one correspondance 
with the continuous characteristic classes of flags of foliations.
The absolute cohomology of the Lie algebra $W(n_1,\ldots,n_k)$ corresponds 
to the case of framed foliations.
We omit the detailed construction based on the concept of formal geometry 
and refer the reader to~\cite{Feigin::flag::foliations}.
In this section we will show an example of an application of Theorem~\ref{thm::HWnm::absolute} in order to describe 
these cohomological classes.
Corollary~\ref{cor::W1..1} below describes the absolute cohomology classes of the Lie algebra $W(1,\ldots,1)$. 

\begin{remark}
 The orthogonal group $O(1)$ is trivial and therefore the normal bundle of the foliation of codimension $1$ 
is trivial. Similarly, the relative normal bundles of $\F_{i+1}\subset \F_{i}$ inside a flag of foliations 
$\{\F_1\supset\F_2\supset \dots\supset \F_k\}$ 
should be trivial whenever the corresponding codimension is equal to $1$.
Hence, for the case of full flags (that is,  when all codimensions are equal to $1$)  the orthogonal group is trivial
and the space of characteristic classes are the same for framed and nonframed foliations.
\end{remark}

Let $\zeta_i$ (respectively $\xi_i$) be the 
$i$th
odd (respectively even) generator of the Weyl superalgebra \\
$\W^{\udot}(\underbrace{\gl_1\oplus\ldots\oplus\gl_1}_{k \text{ copies}}).$
That is $\zeta_i$ (resp. $\xi_i$) is a basis of $\Lambda^{1}\gl_1^{*}$ (resp. $S^1\gl_1^{*}$) associated with the 
$i$th 
copy 
of $\gl_1$.
Let $I:= I_{1,\ldots,1}$ be the ideal of Weyl superalgebra as defined in Corollary~\ref{thm::relative_har_flag_sloj}. 
That is, $I$ is  generated by subspaces $S^{j+1}\left(\underbrace{\gl_1\oplus\ldots\oplus\gl_1}_{j \text{ copies}}\right)$ 
with $j$ ranges from $1$ to $k$.
\begin{theorem}
\label{thm::monom::W1:1}
The set of monomials 
\begin{equation}
\label{eq::monom::W11}
\left\{
\zeta_{\alpha_1}\ldots\zeta_{\alpha_s}\xi_1^{i_1}\xi_2^{i_2}\dots\xi_{\alpha_s}^{i_{\alpha_s}} 
\ : \
1\leq \alpha_1<\ldots<\alpha_s\leq N \text{ and }
\begin{array}{c}
\forall k\leq \alpha_s \quad i_1+\ldots+i_k \leq k, \\
 i_1+\ldots+i_{\alpha_s} = \alpha_s
\end{array}
\right\}
\end{equation}
form a basis of cohomological classes of the truncated Weyl superalgebra  
$\frac{\W^{\udot}(\overbrace{\gl_1\oplus\ldots\oplus\gl_1}^{N \text{ copies}})}{I_{1,\ldots,1}}$. 
The Poincare generating series of the homology of the truncated Weyl superalgebra is as follows:
\begin{equation}
\label{eq::ser::HW1..1}
\sum_{q\geq 0}q^{k}\dim\rH^{k}(\W^{\udot}(\gl_1\oplus\ldots\oplus\gl_1)/I_{1,\ldots,1})=1+\sum_{n=1}^{N} q^{2n+1}(1+q)^{n-1}C(n),
\end{equation}
where 
$\displaystyle{C(n)=\frac{1}{(n+1)}{\binom{2n}{n}}}$
 is the 
$n$th Catalan number.
\end{theorem}
\begin{proof}
Consider the second term of the spectral sequence associated with the standard filtration on the truncated Weyl superalgebra:
$$
E_2 = \kk[\zeta_1,\ldots,\zeta_N;\xi_1,\ldots\xi_N]/(I_{1,\ldots,1}), 
\quad d_2 = \sum_{i=1}^{N}\xi_i\frac{\partial}{\partial\zeta_i}.
$$
All higher differentials in this spectral sequence vanish.
Recall that the ideal $I$ is generated by the following collection of monomials:
$$
\xi_1^{i_1}\cdot \ldots\cdot \xi_k^{i_k} \text{ with } i_1+\ldots+i_k > k
$$
Let us prove that monomials yielding the restriction~\eqref{eq::monom::W11} in the statement of Theorem form a basis of 
the cohomology of the complex given by the second term $(E_2,d_2)$.
The differential $d_2$ is a direct sum of $N$ commuting differentials $\xi_i\frac{\partial}{\partial\zeta_i}$.
Let us compute the cohomology of $E_2$ with respect to the differential $\xi_N\frac{\partial}{\partial\zeta_N}$.
In other words, we consider the spectral sequence associated with the bicomplex where differential $d_2$ is the 
sum of two commuting differentials $\xi_N\frac{\partial}{\partial\zeta_N}$ 
and $\left(\sum_{i=1}^{N-1}\xi_i\frac{\partial}{\partial\zeta_i}\right)$.
The differential $\xi_N\frac{\partial}{\partial\zeta_N}$ 
maps monomials to monomials. Therefore, representing cocycles may be also chosen to be monomials.
Consider a monomial $f=\zeta_1^{\epsilon_1}\ldots\zeta_N^{\epsilon_N}\xi_1^{i_1}\ldots\xi_N^{i_N}$.
In order to be nonzero in the fraction of Weyl algebra by monomial ideal $I_{1,\ldots,1}$ we have the following restrictions:
$$
\forall i=1\ldots N \ \epsilon_i\in\{0,1\}, \ \text{ and } \forall k=1\ldots N \text{ we have } i_1+\ldots+i_k\leq k.
$$
There are two possibility for the monomial $f$
to represent a nonzero cohomological class with respect to the differential $\xi_N\frac{\partial}{\partial\zeta_N}$.
Either $\epsilon_N=i_N=0$ or $\epsilon_N=1$, $i_N\geq 1$ and $i_1+\ldots+i_N=N$.
The first case implies that we can forget about variables $\xi_N$ and $\zeta_N$ and deal with the problem of chasing cocycles
for the 
Lie algebra $W_{\underbrace{1,\ldots,1}_{N-1 \text{ copies }}}$ and use the induction arguments afterwords
In the second case the condition $i_N\geq 1$ and $i_1+\ldots+i_N=N$ implies that $\xi_i(f)$ belongs to the ideal $I$ for all $i$
and, consequently, $\xi_i\frac{\partial}{\partial\zeta_i}(f) = 0$ what means that 
monomial $f$ represents a nonzero cocycle in the full truncated Weyl algebra. 

 In order to count the Poincare series it remains to count the generating series of the set~\eqref{eq::monom::W11}
of monomials representing the linear independent homological classes.
Recall that the number of monomials $\{\xi_1^{i_1}\ldots,\xi_n^{i_n}\}$ of degree $n$ subject to the condition 
$i_1+\ldots+i_k\leq k$ for all $k\leq n$ is equal to the $n$'th Catalan number 
$C(n)=\frac{1}{(n+1)}{\binom{2n}{n}}$. This description of Catalan numbers may be found in~\cite{Stanley}.
Therefore, the generating series of monomials of the form 
$\zeta_1^{\epsilon_1}\ldots\zeta_n^{\epsilon_n}\xi_1^{i_1}\ldots\xi_n^{i_n}$,
that belong to the set~\eqref{eq::monom::W11}, is equal to $(1+q)^{n-1} q^{1}q^{2n} C(n)$ where the factor $(1+q)^{n-1}$
means that $\epsilon_i$ for $i<n$ may be either $0$ or $1$, factor $q^1$ comes from the degree of $\zeta_n$ and the factor $q^{2n}$
represents the degree of the monomial $\xi_1^{i_1}\ldots\xi_n^{i_n}$ which is $2(i_1+\ldots+i_n)=2n$.
The final set of monomials is the union of the aforementioned sets with $n$ ranging from $1$ to $N$.
\end{proof}

Consider a flag of foliations
$\F_1\supset\F_2\supset \dots\supset \F_N$, 
for which the codimensions 
$\F_{i+1}$ in $\F_i$ are equal to $1$ for all $i$.
Let $\omega_1$,\ldots,$\omega_N$ be the system of determining forms that defines this flag of foliations.
That is we have $\forall k=1$,\ldots,$N$ $d\omega_k = \sum_{i=1}^{k} \omega_i\wedge \nu_{i k}$ and 
the tangent  spaces to the leaves of the foliation $\F_k$ 
are
annihilated by 
the forms $\omega_1$,\ldots,$\omega_k$.
The space of characteristic classes of a flag of foliations are described by Theorem~\ref{thm::monom::W1:1}
in the following
\begin{corollary}
\label{cor::W1..1}
 The Poincare series of the cohomology of the Lie algebra $W(1,\ldots,1)$ with trivial coefficients is given
by 
\eqref{eq::ser::HW1..1}.
Each monomial $\zeta_i$ and $\xi_i$ from~\eqref{eq::monom::W11} defines a characteristic class of a flag of 
foliations with subsequent codimentions $1$ which is defined by a collection of $1$-forms
$\omega_k$ with $d\omega_k = \sum_{i=1}^{k} \omega_i\wedge \nu_{i k}$ subject to the substitution $\zeta_i := \nu_{ii}$ and $\xi_i:=d\nu_{ii}$.
In particular, the corresponding cohomological classes do not depend on the choice of 
the determining forms $\omega_i$.
\end{corollary}
\begin{proof}
We use Theorem~\ref{thm::W_flag_absolute} in order to identify the cohomology ring of the Lie algebra $W(1,\ldots,1)$
and the truncated Weyl superalgebra.
The generalization of 
the
Godbillon-Vey class is also straightforward. 
We refer to~\cite{Fuks::Lie::cohomology} 
for  a detailed description of the Godbillon-Vey class using the Lie algebra homology 
of formal vector fields on the line. 
\end{proof}

\subsection{Formulas for cocycles in the case of a line ($n=1$)}
\label{sec::cocycles::W1}
For the case of vector fields on a line there are some simplifications of 
the
direct description of cochains representing cocycles for the Lie algebra cohomology
$\rH^{\udot}(W_n;S^{m}W_n^{*})$.
These simplifications are based on a direct description 
of the space of chains which is simpler for the case of 
a line.
Indeed, let us identify the space of $q$-linear functionals on the Lie algebra $W_1$ with the ring of polynomials
$\kk[y_1,\ldots,y_q]$ using the following identification of basis in these two spaces.
With a monomial   
$f:=y_1^{r_1}\dots y_q^{r_q}$
we associate a $q$-linear functional $D_{f}: W_1^{\otimes q} \rightarrow \kk $ in the following way:
\begin{equation}
\label{eq::W1::chains}
D_f:
\left(\sum_{r\geq 0} a_{1 r}x^r\frac{\partial}{\partial x},\dots,
\sum_{r\geq 0} a_{q r}x^r\frac{\partial}{\partial x} \right)
\mapsto r_1!\dots r_q! a_{1 r_1}\dots a_{q r_q}.
\end{equation}
Then the space of chains $C^{p}(W_1;S^{m}W_1^{*})= Hom(\Lambda^{p}W_1\otimes S^{m} W_1;\kk)$ 
is 
identified with the subspace of polynomials in $p+m$ variables
$\kk[y_1,\ldots,y_p;z_1,\ldots,z_m]$, 
which
are skew-symmetric with respect to 
permutations of the first $p$ variables 
and symmetric with respect to 
permutations
of the last $m$ variables.
The differential $d:C^{p}(W_1;S^m W_1^*)\rightarrow C^{p+1}(W_1;S^m W_1^*)$ 
is described by the following formula (see e.g. the description given in~\cite{Fuks::Lie::cohomology}[\S2.3]  for $m=0$):
\begin{multline*}
{dP(y_1,\dots,y_{p+1};z_1,\dots,z_m) =}\\
{\sum_{1\leq s <t \leq p+1}
(-1)^{s+t-1}(y_s-y_t)P(y_s+y_t,y_1, \dots,\hat{y_{s}},\dots,\hat{y_{t}},\dots,y_{p+1};z_1,\dots,z_m)+ }\\
{+ \sum_{\substack{{1\leq s \leq p+1}\\{1\leq t\leq m}}}
 (-1)^{s}(y_s-z_t)P(y_1,\dots,\hat{y_s},\dots,y_{p+1};y_s+z_t,z_1,\dots,\hat{z_t},\dots,z_m).
}
\end{multline*}
\begin{theorem}
The cochains
\begin{gather*}
{a_{2 m}:=(y_1^{2}-y_2^{2})z_1\dots z_m\in C^2(W_1;S^m W_1^*), }\\
{a_{3 m}:=(y_1-y_2)(y_2-y_3)(y_3-y_1)z_1\dots z_m\in C^3(W_1;S^m W_1^*)}
\end{gather*}
represent a 
basis in the cohomology $\rH^{\pt}(W_1;S^{m}W_1^*)$.
\end{theorem}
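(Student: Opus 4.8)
The plan is to lean on the cohomology computation already in hand and reduce the theorem to checking that the two listed cochains are nonzero cocycles. By Theorem~\ref{thm::sym_coef} specialized to $n=1$ (so that $n^2+2n=3$), the group $\rH^{i}(W_1;S^m W_1^*)$ is one-dimensional for $i=2$ and for $i=3$ and vanishes in all other degrees, since $[S^{m+1}\gl_1]^{\gl_1}$, $[\Lambda^{0}\gl_1]^{\gl_1}$ and $[\Lambda^{1}\gl_1]^{\gl_1}$ are each one-dimensional. Hence it suffices to produce one nonzero class in degree $2$ and one in degree $3$: as $a_{2m}\in C^2$ and $a_{3m}\in C^3$, I only need that each is a cocycle and neither is a coboundary. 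As a preliminary I would record that both cochains are homogeneous of weight $0$ for the grading element $e_0=x\frac{\partial}{\partial x}$, i.e. every monomial satisfies $\sum r_i+\sum s_j=p+m$ in the notation of~\eqref{eq::W1::chains}; moreover $a_{2m}$ has no monomial linear in a single $y_s$, so it is horizontal and lies in the relative complex $C^{\udot}(W_1,\gl_1;S^m W_1^*)$, whereas $a_{3m}$ is not horizontal.

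For the cocycle condition of $a_{2m}$ I would substitute it into the explicit differential and verify that the two sums vanish separately. The $y$-$y$ sum is an alternating sum of three cubics in $y_1,y_2,y_3$ whose coefficients cancel identically. The $y$-$z$ sum reorganizes, after using $z_t^2\prod_{j\ne t}z_j=z_t\prod_j z_j$, so that the two symmetric $z$-factors $\sum_t\prod_{j\ne t}z_j$ and $(\prod_j z_j)(\sum_t z_t)$ multiply, respectively, the combinations $-(y_2^2-y_3^2)y_1^2+(y_1^2-y_3^2)y_2^2-(y_1^2-y_2^2)y_3^2$ and $-(y_2^2-y_3^2)+(y_1^2-y_3^2)-(y_1^2-y_2^2)$, both of which are identically zero. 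For $a_{3m}$ the clean route is to notice that $a_{3m}=\zeta\cup a_{2m}$, where $\zeta$ is the degree-one cochain $y_1$ dual to $e_0$; indeed the shuffle product antisymmetrises $y_1\otimes(y_1^2-y_2^2)$ to the Vandermonde $(y_1-y_2)(y_2-y_3)(y_3-y_1)$. Since $d$ is a derivation and $d a_{2m}=0$, this gives $d a_{3m}=d\zeta\cup a_{2m}$ with $d\zeta=y_1^2-y_2^2$, and $(y_1^2-y_2^2)\cup a_{2m}$ vanishes by the Pl\"ucker-type identity $(u_1-u_2)(u_3-u_4)-(u_1-u_3)(u_2-u_4)+(u_1-u_4)(u_2-u_3)=0$ in the variables $u_i=y_i^2$.

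It remains to prove nontriviality. Since $a_{2m}$ is a relative cocycle and $\rH^2(W_1,\gl_1;S^m W_1^*)\cong\kk$ injects into the absolute $\rH^2$, I must only rule out $a_{2m}=dQ$ for a relative $1$-cochain $Q$. Here I would run a weight-and-$z$-degree bookkeeping: horizontality together with weight $0$ forces the part of $Q$ whose $z$-exponents are all equal to one to vanish, so every monomial of $a_{2m}$ (all of which have $z$-pattern $(1,\dots,1)$) can only arise from the $y$-$z$ part of $dQ$; this pins $Q$ down to a one-parameter family $\gamma\sum_t z_t^2\prod_{j\ne t}z_j$, and matching the resulting $y$-dependence against $(y_1^2-y_2^2)\prod_j z_j$ forces $\gamma=0$, a contradiction. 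For $a_{3m}$ I would invoke the degeneration of the Hochschild--Serre spectral sequence for $\gl_1\hookrightarrow W_1$ used in the proof of Theorem~\ref{thm::sym_coef}, which yields a ring isomorphism $\rH^{\udot}(W_1;S^m W_1^*)\cong\rH^{\udot}(\gl_1)\otimes\rH^{\udot}(W_1,\gl_1;S^m W_1^*)$; under it $[a_{3m}]=[\zeta]\otimes[a_{2m}]$ is the product of the generator of $\rH^1(\gl_1)$ with the nonzero class $[a_{2m}]$, hence nonzero.

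The main obstacle is precisely the nontriviality of $a_{2m}$: the cocycle identities are routine symmetric-function cancellations, but excluding that $a_{2m}$ bounds requires the relative-complex argument above (equivalently, identifying $[a_{2m}]$ with the generator of $[S^{m+1}\gl_1]^{\gl_1}$ through the associated-graded map of the Weyl superalgebra). Once $[a_{2m}]\ne0$ is established, the degree-three generator follows for free from the cup-product description $a_{3m}=\zeta\cup a_{2m}$, and the two classes together form the claimed basis.
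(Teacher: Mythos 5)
Your reduction is sound and most of the steps are fine: Theorem~\ref{thm::sym_coef} with $n=1$ does reduce the theorem to exhibiting one non-exact cocycle in degree $2$ and one in degree $3$; your two cocycle verifications are correct; and the identity $a_{3m}=\zeta\cup a_{2m}$ with $d\zeta=y_1^2-y_2^2$, killed by the Pl\"ucker identity, is a genuinely nice way to handle degree $3$ (the paper just says ``direct check''), provided one reads ``$[\zeta]\otimes[a_{2m}]$'' correctly as the symbol of $a_{3m}$ in $E^{2,1}$ of the Hochschild--Serre spectral sequence, since $\zeta$ itself is not a cocycle on $W_1$. The genuine gap is exactly at the step you yourself call the crux: your argument that $a_{2m}$ is not a relative coboundary fails. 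First, the relative $1$-cochains whose differential can produce monomials of $z$-pattern $(1,\dots,1)$ form a \emph{two}-parameter family, $\gamma\sum_t z_t^2\prod_{j\ne t}z_j$ \emph{and} $\beta\,y_1^2\sum_t\prod_{j\ne t}z_j$ (you missed the second). Second, and fatally, the matching you propose does not force these coefficients to vanish: a direct computation of the $y$--$z$ part of the differential shows that the $(1,\dots,1)$-part of $dQ$ equals $-m(\gamma+\beta)\,(y_1^2-y_2^2)z_1\cdots z_m$, i.e.\ it reproduces $a_{2m}$ exactly when $\gamma+\beta=-1/m$. So no contradiction can be extracted from that pattern alone; non-exactness is only visible in the \emph{other} $z$-patterns of $dQ$, which your bookkeeping never examines. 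For $m=1$ this is transparent: the general relative $1$-cochain is $Q=\alpha y_1^2+\gamma z_1^2$, and
\[
dQ=(\alpha-\gamma)(y_1^3-y_2^3)+2\alpha\,y_1y_2(y_1-y_2)-(\alpha+\gamma)(y_1^2-y_2^2)z_1+2\gamma\,(y_1-y_2)z_1^2 ,
\]
so the equation coming from the $z_1$-linear terms, $-(\alpha+\gamma)=1$, is perfectly solvable, and it is the three remaining families of terms that force $\alpha=\gamma=0$ and hence the contradiction. A correct proof along your lines must therefore control all $z$-patterns simultaneously (with more families of monomials in $Q$ appearing for general $m$), a substantially longer computation than the one you sketch.

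This is precisely the computation the paper avoids. Its proof establishes $[a_{2m}]\neq 0$ either by identifying $a_{2m}$ with the cocycle $\xi_{1^{m+1},m}$ of Proposition~\ref{prop::cocycl::WnSm}, whose class is nonzero because it is by construction the image of a generator under the surjections $\rH^{\udot}(W_{m+1},\gl_{m+1};\kk)\twoheadrightarrow\rH^{\udot}(WL(m|1),\gl_m\oplus\gl_1;\kk)\twoheadrightarrow\rH^{2}(W_1,\gl_1;S^{m}W_1^{*})$, or, alternatively, by pairing $a_{2m}$ against the explicit nontrivial cycle in $\rH_{\udot}(W_1,\gl_1;S^{m}W_1)$ constructed in \cite{Dotsenko}. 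Either of these (or a completed pattern-by-pattern analysis) would close the hole; as written, the nontriviality of $[a_{2m}]$ --- on which both of your basis classes depend --- is not established.
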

\begin{proof}
A direct check shows
that $d(a_{2m})=d(a_{3m})=0$ 
{which} 
means that they are cocycles.
Moreover, $a_{2m}$ belongs to the relative chain complex $C^{2}(W_1,\gl_1;S^m W_1^*)$ and is equal 
to the cocycle $\xi_{1^{m+1},m}$ which is defined in Proposition~\ref{prop::cocycl::WnSm} 

Alternatively, one can easily verify that the natural pairing between 
the cocycle $a_{2m}$ and 
the cycle in $\rH_{2}(W_1,\gl_1;S^m W_1)$
given in~\cite{Dotsenko} is different from zero for all $m$. This will imply that the corresponding cohomological class 
is different from zero. Theorem~\ref{thm::Wn:SWn:absolute} implies that there is only one nontrivial cohomological class
in this dimension.
\end{proof}

\appendix

\section{Chains on the Lie algebra $W_n$ and $\gl$-invariants}
\label{sec::chains::gl::inv}
This section contains 
detailed explanations of 
known results and their generalizations mentioned in
Section~\ref{sec::Wn::known}.
We will describe the action of matrix Lie subalgebras on the Lie algebras $W_n$, $W(m;\g)$, $WL(m|;\g)$, $WL(m|n)$ and 
use this description in order to identify the space 
of $\gl$-invariant chains with the truncated Weyl algebras.
This description of the relative chain complexes has been written in 
detail
in~\cite{GF},~\cite{Fuks::Lie::cohomology} 
for the Lie algebra $W_n$, and \cite{Khor::Wn_g} deals with the Lie algebra $W(m;\g)$. 

\subsection{$\gl$-decompositions}
Let $V$ be a vector space of dimension $m$. Let $x_1,\ldots,x_m$ be 
a 
basis of linear coordinates on this space.
Then as a $\gl_m=\gl(V)$ module the linear span of constant derivatives $\frac{\partial}{\partial x_i}$ is isomorphic to $V$
and the space of polynomial functions of degree $k$ 
spanned by monomials $x_1^{i_1}\ldots x_m^{i_m}$ with $i_1+\ldots+i_m{\cred =k}$ 
is isomorphic to the $k$-th symmetric power of $V^{*}$. 
Thus
the space of functions $\calO_m=\calO(V)$ on $V$ is isomorphic as a $\gl(V)$-module to the 
completion of the
direct sum 
$\widehat{\ooplus_{k\geq 0}} S^{k}V^{*}$. Consequently, we have the following decompositions as $\gl_m$-modules:
$$
\begin{array}{c}
 W_m\cong \widehat{\ooplus\limits_{k\geq 0}} S^{k}V^{*}\otimes V , \qquad \gl_m\cong V^{*}\otimes V \\
W(m;\g)=W_{m}\ltimes\g\otimes \calO_m \cong_{\gl_m} 
\left(\widehat{\ooplus\limits_{k\geq 0}} S^{k}V^{*}\otimes V \right) \oplus
\widehat{\ooplus\limits_{k\geq 0}} S^{k}V^{*}\otimes \g ,
\\
WL(m|;\g)\cong_{\gl_m} 
\left(\widehat{\ooplus\limits_{k=0,1}} S^{k}V^{*}\otimes V \right) \oplus
\widehat{\ooplus\limits_{k\geq 0}} S^{k}V^{*}\otimes \g  
\cong_{\gl_m} (V^{*}\oplus \gl(V))\oplus \widehat{\ooplus\limits_{k\geq 0}} S^{k}V^{*}\otimes \g.
\end{array}
$$
Let us now describe the space of relative chains in the aforementioned Lie algebras.
First, we recall, that these infinite-dimensional Lie algebras are topological.
The underlying topology
comes 
from the topology on formal power series.
The linear continuous functions on the  vector space of formal power series 
is the polynomial ring on the dual space.
In particular, we have the following identity:
$$
\left(\widehat{\ooplus\limits_{k\geq 0}} S^{k}V\right)^{*} \cong \left({\ooplus\limits_{k\geq 0}} S^{k}V^{*}\right)
$$
We use the usual dual sign ``$*$'' having in mind that one should take the linear continuous dual.

\subsection{Graphs representing chains on $W_m$}
\label{sec::chains::Wn::graphs}
We have the following description of the relative chain complexes: 
\begin{multline}
\label{eq::Wm::chain::description}
C^{\udot}(W_m,\gl_m;\kk) = \left[\Lambda^{\udot}\left(\frac{W_m}{\gl_m}\right)^{*}\right]^{\gl_m}=
\left[\left(\widehat{\ooplus\limits_{
k=0,2,3,\ldots
}} 
S^{k}V^{*}\otimes V \right)\right]^{\gl(V)} 
\cong 
\\
\cong
\left[ \Lambda^{\udot}\left({\ooplus\limits_{k=0,2,3,\ldots}} 
S^{k}V\otimes V^{*} \right)\right]^{\gl(V)} 
\cong
\ooplus\limits_{ \{p_0,p_2,p_3,\ldots\} }
\left[\OT\limits_{k=0,2,3,\ldots} \Lambda^{p_k}(S^{k}V\otimes V^{*})\right]^{\gl(V)}.
\end{multline}
The index $k=1$ is omitted in the right-hand side of Isomorphism~\ref{eq::Wm::chain::description} 
because we factorize $W_n$ by the subspace $\gl(V)\cong V^{*}\otimes V$.
The main observation due to Gelfand and Fuchs in~\cite{GF} is the description 
 of this 
 ring of $\gl(V)$-invariants. 
They observed that all nonzero $\gl$-invariants 
come from the subring 
$$\left[\OP\limits_{p_0} \Lambda^{p_0} V^{*} \otimes \Lambda^{p_0} (S^{2}V\otimes V^{*})\right]^{\gl(V)}.$$

Let us explain their result using the language of graphs as one always 
does when
 working with $\gl$-invariants.
(see e.g.~\cite{Kontsevich::graphs} for the definition of the graph-complex in the similar problem of describing 
the cohomology of the Lie algebra of Hamiltonian vector fields.
Consider an oriented graph $\Gamma$ yielding the following conditions:

($\imath$) Each vertex has exactly one outgoing edge; 

($\imath\imath$)
For each vertex $v$ the number $in(v)$ of incoming edges is never equal to $1$. 
That is, we allow  $0,2,3$,\ldots for possible numbers of incoming edges $in(v)$.

Assign to  $\Gamma$ the relative cochain $c_{\Gamma}$ on the Lie algebra $W_n$ using the right-hand side of 
the isomorphism in~\eqref{eq::Wm::chain::description}.
Suppose that $\Gamma$ has $p_0$ vertices with no incoming edges,
$p_2$ vertices with $2$ incoming edges and so on. 
Then $\Gamma$ defines a $\gl(V)$-invariant in the space
$\OT_{k=0,2,3,\ldots} \Lambda^{p_k}( S^{k} V\otimes V^{*})$ 
where each edge in $\Gamma$ defines a $\gl(V)$-invariant pairing between appropriate factors $V^{*}$ and $V$.
To be strict one has to do the following:
first,  fix an order  on the set of vertices: $v_1$,\ldots,$v_{\sum p_i}$; 
second, attach to each vertex with $k$ incoming edges a factor $S^{k}V\otimes V^{*}$;
third, each edge $v_i\rightarrow v_j$ defines a $\gl(V)$-invariant pairing between $V^{*}$ and $V$
coming from the contravariant component of $S^{in(v_i)}V\otimes V^{*}$ and one of the covariant arguments of $S^{in(v_j)}V\otimes V^{*}$;
fourth,
make a skewsymmetrization 
with respect to  all vertices of the same 
valency. (See e.g.~\cite{Fuks::Lie::cohomology} for the detailed description.)
The latter skewsymmetrisation procedure produces 
the 
following two necessary conditions on $\Gamma$ 
in order to have a nonzero cochain $c_{\Gamma}$:
\begin{wrapfigure}{r}{4.2cm}
\includegraphics{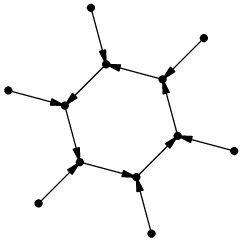}
\caption{Graph-wheel\,$\Gamma_{6}$}
\label{pic::wheel_wn}
\end{wrapfigure}

($1$) 
 If two vertices both have a zero number of incoming edges, then their outgoing arrows lead to different vertices.

($2$)
The number of vertices $v$ with  $in(v)=0$ is less or equal to the dimension of $V$.

\noindent
{Condition} ($1$) comes from the following fact:
{A} permutation of two vertices of $\Gamma$,
 each having no incoming arrows should send $c_{\Gamma}$ to $-c_{\Gamma}$, 
however the permutation of two incoming edges 
in any vertex should not change the cochain $c_{\Gamma}$. 
Therefore, whenever the first condition is not satisfied we get $c_{\Gamma}=-c_{\Gamma}$ and hence $c_{\Gamma}=0$.
Condition ($2$) corresponds to the fact that $\Lambda^{>dim(V)}(V^{*})=0$.

The conditions ($1$) and ($2$) are already enough to describe all possible graphs representing nontrivial cochains.
Let $\Gamma^{c}$ be a connected component of the graph $\Gamma$ satisfying properties ($1$) and ($2$). 
Then the set of vertices $\{v\in \Gamma^{c}| in(v)>2\}$ is empty and the cardinality of subsets of vertices 
$\{v\in \Gamma^{c}| in(v)=2\}$ and $\{v\in \Gamma^{c}| in(v)=0\}$ are the same.
Thus, the number of vertices in the connected component $\Gamma^{c}$ 
is even and there is only one connected graph $\Gamma_r$ with $2r$ vertices
yielding property ($1$) which looks like a wheel, see Picture~\picref{pic::wheel_wn} below. 
Property ($2$) implies that $c_{\Gamma}\neq 0$ only if $\Gamma=\Gamma_{r_1}\sqcup\ldots\sqcup\Gamma_{r_k}$
with $r_1+\ldots+r_k\leq dim(V)$. 
A
direct check shows that the cochains $c_{\Gamma}$ with $\Gamma$ being a union of wheels $\Gamma_{r_i}$ are linearly independent.
The map $\Psi_{2r}\rightarrow c_{\Gamma_r}$ defines an isomorphism from the truncated ring
$\rH^{\udot\leq 2n}(BU_n)=\kk^{\udot\leq 2n}[\Psi_2,\ldots,\Psi_{2n}]$
to the relative cochain complex $C^{\udot}(W_n,\gl_n;\kk)$
and we get the conclusion of Theorem~\ref{thm::HWn}.

\subsubsection{Chains on $WL(m|;\g)$.}
The relative chain complex  of the Lie algebra $WL(m|;\g)$ has 
the following description in terms of $\gl$-invariants:
\begin{multline*}
C^{\udot}(WL(m|;\g),\gl_m;\kk)\cong 
\left[\Lambda^{\udot}\left((V^{*}\oplus \gl(V))\oplus \widehat{\ooplus\limits_{k\geq 0}} S^{k}V^{*}\otimes \g\right)^{*}\right]
=
\\
=
\bigoplus_{ \{q_0;p_0,p_1,\ldots\} }
\left[\Lambda^{q_0}V^{*}\OT\bigotimes_{k\geq 0} \Lambda^{p_k}\left(S^{k}V\otimes \g^{*}\right)
\right]^{\gl(V)}
\end{multline*}
One has a description of the invariants in terms of graphs similar to the one considered for the Lie algebra $W_n$.
The main difference consists of removing vertices with $in(v)\geq 2$ and 
adding vertices with no outgoing edges and with arbitrary amount of incoming edges.
However, Condition ($1$) and ($2$) for these graphs remains the same.
The symmetry arguments show that 
there are nontrivial invariants only in the case $p_k=0$ for $k>1$ and $p_1=q_0 \leq dim(V)$. 
So we end up with an isomorphism with the truncated Weyl algebra
\begin{multline*}
C^{\udot}(WL(m|;\g),\gl_m;\kk)\cong 
\bigoplus_{
\begin{smallmatrix}
p\geq 0,
1\leq q\leq m 
\end{smallmatrix}
}
\left[\Lambda^{q}V\otimes \Lambda^{p}(\g^{*})\otimes\Lambda^{q}(V\otimes\g^{*}) \right]^{\gl(V)} 
\cong\\
\cong
\Lambda^{\udot}(\g^{*})\otimes S^{\udot\leq dim(V)}(\g^{*}) \cong \W^{\udot}(\g)/F^{2m+1}
\end{multline*}

The description of the relative chains of the Lie algebra $W(m;g)$ is somehow the union of the descriptions of 
the chains on $W_m$ and chains on $WL(m|;\g)$. We omit the details and refer 
the reader to the  
\begin{wrapfigure}{r}{4.2cm}
\includegraphics{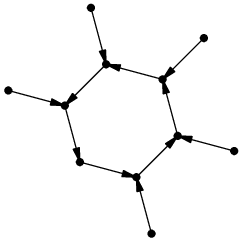}
\caption{Graph representing $\xi_{6,1}$}
\label{pic::wheel_wnSwn}
\end{wrapfigure} 
article~\cite{Khor::Wn_g}.

\subsection{formulas for cocycles}
\label{sec::cocycles}
\label{sec::cocycles::general}
All  the previous sections of Appendix~\ref{sec::chains::gl::inv} 
were collected 
here in order to be able to give a description 
of {the} cocycles representing the cohomology classes of $\rH^{\udot}(W_n,\gl_n;S^{m}W_n^{*})$. 
The main idea is to describe the images of cocycles coming from the cohomology {classes} of $\rH^{\udot}(W_{n+m},\gl_{n+m};\kk)$. 

In order to distinguish 
between the present case and
the previous one
we denote the canonical $n$-dimensional vector space $\kk^{n}$ by $U$
and the $m$-dimensional vector space is denoted by $V$.
Consider the $\gl_{m+n}$-invariant 
$$\xi_r\in\Lambda^{r}(V^{*}\oplus U^{*})\otimes \Lambda^{r}\left(S^{2}(V\oplus U)\otimes (V^{*}\oplus U^{*})\right)$$
represented by the wheel graph $\Gamma_r$.
Let $\varphi(\xi_r)$ be the restriction of this invariant 
to 
the subspace
$$\Lambda^{r}(V^{*}\oplus U^{*})\otimes \Lambda^{r}(S^{2}(V\oplus U)\otimes U^{*})$$
The latter subspace is decomposed into the following direct sum:
\begin{multline}
\Lambda^{r}(V^{*}\oplus U^{*})\otimes \Lambda^{r}(S^{2}(V\oplus U)\otimes U^{*})  \cong \\
\cong \bigoplus_{
\begin{smallmatrix}
s_1+s_2 = r\\
t_1+t_2+t_3 =r
\end{smallmatrix}
}
 \Lambda^{s_1}V^{*}\otimes \Lambda^{s_2} U^{*} 
\otimes \Lambda^{t_1}(S^{2}V\otimes U^{*}) \otimes \Lambda^{t_2}( V\otimes U\otimes U^{*}) 
\otimes \Lambda^{t_3}( S^{2}U\otimes U^{*})
\end{multline}
and the symmetry conditions implies that 
$\gl(V)$-invariants are nontrivial only
 for $t_1=0$, $s_1=t_1$ and $s_2=t_2$ respectively. 
Moreover, we have the following description of the subspace of $\gl(V)$-invariants:
\begin{multline*}
\left[\Lambda^{s}V^{*}\otimes \Lambda^{r-s} U^{*}\otimes \Lambda^{s}( V\otimes U\otimes U^{*})
\otimes \Lambda^{r-s}(S^{2}U\otimes U^{*})\right]^{\gl(V)} \cong \\
\cong
\Lambda^{r-s} U^{*}\otimes \Lambda^{r-s}(S^{2}U\otimes U^{*}) 
\otimes\left[\Lambda^{s}V^{*} \otimes \Lambda^{s}( V\otimes U\otimes U^{*}) 
\right]^{\gl(V)} \cong\\
\cong
\Lambda^{r-s} U^{*}\otimes \Lambda^{r-s}(S^{2}U\otimes U^{*}) 
\otimes S^{s}(U\otimes U^{*}).
\end{multline*}
We denote by $\xi_{r,s}$ the image of the projection $tr_s$ of $\varphi(\xi_r)$ onto the aforementioned subspace of $\gl(V)$ invariants:
$$
tr_s:\left[\Lambda^{r}(V^{*}\oplus U^{*})\otimes \Lambda^{r}(S^{2}(V\oplus U)\otimes U^{*})\right]^{\gl(V)}
\twoheadrightarrow
\Lambda^{r}U^{*}\otimes \Lambda^{r}(S^{2} U\otimes U^{*})\otimes S^{s}(U\otimes U^{*})
$$
where 
{the} parameter $s$ may 
{range} from $1$ to $r$.
The  underlying graphs associated with the invariant $\xi_{r,s}$  also look like a union of wheels. 
The main difference with the wheel graphs $\Gamma_r$ is that now some vertices on the ring of 
a wheel will have no incoming edges from the outside of the ring. (See Picture~\picref{pic::wheel_wnSwn}).

\begin{proposition}
\label{prop::cocycl::WnSm}
For any partition $\lambda=\{\lambda_1\geq\ldots\geq\lambda_k\}$ of the integer $(m+n)$ the cochain 
$$
\xi_{\lambda,n}:= \sum_{s_1+\ldots+s_k=m} \xi_{\lambda_1,s_1}\cdot\ldots\cdot\xi_{\lambda_k,s_k} \in 
\left[\Lambda^{n}U^{*}\otimes \Lambda^{n}(S^{2} U\otimes U^{*})\otimes S^{m}(U\otimes U^{*})\right]^{\gl(U)}
$$
defines a cocycle in $C^{2n}(W_n,\gl_n;S^{m}W_n^{*})$.
Moreover, the cocycles $\xi_{\lambda,n}$ with $l(\lambda)\leq n$ are linearly independent and form a set
of representatives of all 
{the} different cohomological classes. 
\end{proposition}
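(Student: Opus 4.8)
The plan is to realize each $\xi_{\lambda,n}$ as the image of an honest cocycle under the restriction and projection maps already set up for the proof of Theorem~\ref{thm::homol::WLmn}, so as to avoid a head-on computation with the Chevalley--Eilenberg differential. Recall that the wheel graph $\Gamma_r$ represents the generator $\Psi_{2r}$ of $\rH^{\udot}(W_{m+n},\gl_{m+n};\kk)$, so that for a partition $\lambda=\{\lambda_1\ge\ldots\ge\lambda_k\}$ of $m+n$ the product $\xi_{\lambda_1}\cdot\ldots\cdot\xi_{\lambda_k}$ is a cocycle in $C^{2(m+n)}(W_{m+n},\gl_{m+n};\kk)$. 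First I would restrict this product along the embedding $WL(m|n)\hookrightarrow W_{m+n}$; since restriction of cochains is a morphism of differential graded algebras, the result is again a cocycle, now living in $C^{\udot}(WL(m|n),\gl_m\oplus\gl_n;\kk)\cong\W^{\udot}(W_n,\gl_n)/F^{2m+1}$ by Isomorphism~\rf{eq::W_m::WL_nm}. The content of the definitions of $\xi_{r,s}$ and of $tr_s$ is precisely that this restriction decomposes according to the standard symmetric-power filtration, with $\xi_{\lambda_1,s_1}\cdot\ldots\cdot\xi_{\lambda_k,s_k}$ being the component of symmetric degree $s=\sum s_i$; the cochain $\xi_{\lambda,n}$ collects all components of top symmetric degree $s=m$, which span the subcomplex $F^{2m}/F^{2m+1}\cong C^{\udot}(W_n,\gl_n;S^m W_n^*)$ of~\rf{eq::Weyl::assoc::grad}.

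For the cocycle property I would argue as follows. Write the restriction $P:=(\xi_{\lambda_1}\cdot\ldots\cdot\xi_{\lambda_k})|_{WL(m|n)}$ as $P=\sum_{s=0}^m Q_s$ with $Q_s$ of symmetric degree $s$ and $Q_m=\xi_{\lambda,n}$. The differential of the Weyl algebra splits as $d=d_{CE}+\delta$, where $d_{CE}$ preserves the symmetric degree and induces on $F^{2s}/F^{2s+1}$ the Chevalley--Eilenberg differential with coefficients in $S^sW_n^*$, while $\delta$ raises the symmetric degree by one. Since $\delta(Q_m)$ would have symmetric degree $m+1$ and so vanishes in the truncated algebra, the component of $dP=0$ in symmetric degree $m$ reads $d_{CE}(\xi_{\lambda,n})=-\delta(Q_{m-1})$; thus closedness of $\xi_{\lambda,n}$ is equivalent to the vanishing of $\delta(Q_{m-1})\in C^{2n+1}(W_n,\gl_n;S^mW_n^*)$. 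The hard part will be exactly this vanishing, and the cleanest route I see is a parity argument at the chain level: every relative cochain built from $\gl(U)$-invariants of decorated-wheel type (as in Figure~\picref{pic::wheel_wnSwn}) sits in even Chevalley--Eilenberg degree, because each block $\xi_{r,s}$ occupies Chevalley--Eilenberg degree $2(r-s)$ and products of such blocks remain even. Consequently $\delta(Q_{m-1})$, a weight-zero $\gl(U)$-invariant of odd degree $2n+1$ of precisely this type, must vanish; equivalently one invokes Computation~\rf{eq::H(Wn,SWn)}, which shows that the only nonzero cohomology sits in the even degree $2n$, so the relevant $d_1$-transgression into degree $2n+1$ is zero. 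Either way $d_{CE}(\xi_{\lambda,n})=0$.

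Granting the cocycle property, I would deduce linear independence and exhaustiveness from Theorem~\ref{thm::homol::WLmn} together with the short exact sequence of complexes relating two successive values of $m$ that yields Computation~\rf{eq::H(Wn,SWn)}. By that theorem $\rH^{\udot}(WL(m|n),\gl_m\oplus\gl_n;\kk)$ is the truncated polynomial ring $\kk^{\udot\le2(m+n)}[\Psi_2,\ldots,\Psi_{2n}]$, whose top graded piece in degree $2(m+n)$ has as a basis the monomials $\Psi_{2\lambda_1}\cdot\ldots\cdot\Psi_{2\lambda_k}$ with $\sum\lambda_i=m+n$ and all $\lambda_i\le n$; these are indexed exactly by partitions $\lambda$ of $m+n$ with $l(\lambda)\le n$ after conjugation. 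The short exact sequence identifies this top piece with $\rH^{2n}(W_n,\gl_n;S^mW_n^*)[-2m]$, and under this identification the class of $\Psi_{2\lambda_1}\cdot\ldots\cdot\Psi_{2\lambda_k}$ is carried to the class of $\xi_{\lambda,n}$ by the very construction of the preceding paragraph. Hence the $\xi_{\lambda,n}$ with $l(\lambda)\le n$ are linearly independent in $\rH^{2n}(W_n,\gl_n;S^mW_n^*)$, and since their number equals $\dim[S^{m+n}\gl_n]^{\gl_n}=\dim\rH^{2n}(W_n,\gl_n;S^mW_n^*)$ by Computation~\rf{eq::H(Wn,SWn)}, they form a basis. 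I expect the only genuinely delicate point to be the chain-level vanishing $\delta(Q_{m-1})=0$ above; the rest is a transcription of the isomorphisms already in hand.
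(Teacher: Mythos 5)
Your overall pipeline is the one the paper itself uses: restrict the product of wheel cocycles $\xi_{\lambda_1}\cdots\xi_{\lambda_k}$ along $WL(m|n)\hookrightarrow W_{m+n}$, read the result in $\W^{\udot}(W_n,\gl_n)/F^{2m+1}$, and extract the top symmetric-degree component; the paper differs only at the end, where linear independence is checked by a nondegenerate pairing with explicit cycles rather than by your dimension count (which is fine, once the classes are identified). You have also correctly reduced the cocycle property to the chain-level identity $\delta(Q_{m-1})=0$, equivalently $d_{CE}(\xi_{\lambda,n})=0$. The problem is that neither of your two justifications of this identity is valid, and this identity is exactly the nontrivial content of the Proposition.

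The parity argument is circular: it applies only to elements lying in the span of products of the blocks $\xi_{r,s}$, and you give no reason why $\delta(Q_{m-1})$, the image of such an element under the transgression part of the Weyl differential, should again lie in that span --- asserting this is equivalent to asserting the vanishing you want, since $\delta$ shifts Chevalley--Eilenberg degree by an odd amount. In fact the differential does \emph{not} preserve ``wheel type'': the block $\xi_{1,1}$ is (up to scale) the trace-of-linear-part functional $\tr\in\left[U\otimes U^{*}\right]^{\gl(U)}\subset S^{1}W_n^{*}=C^{0}(W_n,\gl_n;S^{1}W_n^{*})$, and $d_{CE}\tr\neq0$ because $\tr$ is not $W_n$-invariant: up to sign its value on the pair $\left(\dx{1},\,x_1^{2}\dx{1}\right)$ is $\tr\bigl([\dx{1},x_1^{2}\dx{1}]\bigr)=\tr(2x_1\dx{1})=2$. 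This also shows the individual summands $\xi_{\lambda_1,s_1}\cdots\xi_{\lambda_k,s_k}$ are not $d_{CE}$-closed, so the desired vanishing is a genuine cancellation across your sum over $s_1+\ldots+s_k=m$; no argument that ignores that summation can succeed. Nor can you appeal to the ambient odd-degree cochain space being zero: $C^{2n+1}(W_n,\gl_n;S^{m}W_n^{*})\neq0$ in general --- e.g.\ for $n=1$, $m=2$ the invariant $e_0^{*}\wedge e_2^{*}\wedge e_3^{*}\otimes e_0^{*}e_0^{*}$, with $e_i=x^{i}\frac{\partial}{\partial x}$, is a nonzero relative $3$-cochain (the case $n=m=1$, where this space happens to vanish, is misleading). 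Finally, invoking Computation~\rf{eq::H(Wn,SWn)} is a non sequitur: $\rH^{2n+1}(W_n,\gl_n;S^{m}W_n^{*})=0$ says every $(2n+1)$-cocycle is a coboundary, while $d_{CE}(\xi_{\lambda,n})$ is automatically a coboundary; vanishing of cohomology can never force a specific cochain to vanish. Since your concluding identification of $[\xi_{\lambda,n}]$ with the image of $\Psi_{2\lambda_1}\cdots\Psi_{2\lambda_k}$ (hence independence and exhaustiveness) also rests on this step, the proof as written has a genuine gap at its crux; closing it requires an actual computation with the invariants (as the paper does explicitly for $n=1$ in Section~\ref{sec::cocycles::W1}) or a structural argument about the restriction map that goes beyond bookkeeping of gradings.
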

\begin{proof}
The proof is a straightforward description of the images of cocycles under surjections:
\begin{equation}
\label{eq::Wm+n::Wmn}
C^{\udot}(W_{m+n},\gl_{m+n};\kk)=\rH^{\udot}(W_{m+n},\gl_{m+n};\kk)\twoheadrightarrow 
\rH^{\udot}(WL(m|n),\gl_m\oplus\gl_n;\kk)\twoheadrightarrow 
\rH^{\udot}(W_n,\gl_n;S^{k}W_n^{*}).
\end{equation} 
Because of the first equality in~\eqref{eq::Wm+n::Wmn} 
the linear independence of cochains $\{\xi_{\lambda,n} | l(\lambda)\leq n\}$ will imply the linear independance of
corresponding cohomological classes.
There exists a natural way how to assign a chain $\eta_{\Gamma}\in C_{\ldot}(W_n,\gl_n;S^{k}W_n^{*})$ to a graph $\Gamma$
(see e.g.~\cite{Fuks::Lie::cohomology}).
The pairing between  $\xi_{\lambda,n}$ and $\eta_{\Gamma_{\mu}}$ with $l(\lambda),l(\mu) \leq n$ is nondegenerate and
thus the cochains $\xi_{\lambda,n}$ are linearly independent.
\end{proof}

\section{Relative cohomology of parabolic Lie subalgebra}
\label{sec::parabolic}
In this section we will explain some vanishing cohomological results for the cohomology of a parabolic subalgebra.
All these results are simple applications of the BGG resolution.
We formulate all key corollaries in their full generality.
That is, we work with a semisimple or reductive Lie algebra $\g$ and 
negative parabolic subalgebra $\p_{I}$.
However, the applications we need in order to compute the Lie algebra cohomology of 
{a} certain class of vector fields
is a very specific case: $\g=\gl_{m+n}$ and 
{the} Levi subalgebra of 
{the} parabolic subalgebra $\p_{I}$ is the subset of matrices 
with two blocks $\h_{I}=\gl_m\oplus\gl_n$.

\subsection{Notation}
\label{sec::parabolic::notation}
Let $\g$ be a semisimple (reductive) Lie algebra with a chosen Cartan decomposition
$\g = \n^{-}\oplus\h\oplus \n^{+}$.  
Let $R$ be the associated root system, $S$ the subset of simple roots and $W$ the corresponding Weyl group.
Each element $w\in W$ admits a decomposition $w=s_{\alpha_1}\ldots s_{\alpha_r}$ into the product of simple reflections $s_{\alpha}$ with $\alpha\in S$. The minimum of the number of factors among such decompositions is called the length of $w$ and is denoted $l(w)$. 
The subalgebra $\p$ is called parabolic if it contains the negative Borel subalgebra ${\mathbf{b}^{-}}=\h\oplus\n^{-}$.
Let us fix a subset $I$ of simple roots and assign to it 
the
standard parabolic subalgebra $\p_{I}$.
Let $R_I$ be the subset of roots generated by $I$. 
 $R_I^{+}$ and $R_I^{-}$ 
{denote}
the subsets of positive and negative roots of $R_I$.

Now we are able to specify the notations for 
{some important} 
subalgebras related to a chosen parabolic subalgebra $\p_{I}$: 
$$
\begin{array}{rl}
\p_{I} := & \n^{-}\oplus\h\oplus\bigoplus_{\alpha\in R_{I}^{+}}\g_{\alpha} \text{ (Standard parabolic subalgebra)}, \\
\h_{I} := & \h \oplus \bigoplus_{\alpha\in R_{I}}\g_{\alpha} \text{ (Maximal reductive subalgebra of } \p_{I}\text{)},\\
\n_{I}^{+}:= & \bigoplus_{\alpha\in R^{+}\setminus R_{I}^{+}}\g_{\alpha} \\
\n_{I}^{-}:= & \bigoplus_{\alpha\in R^{-}\setminus R_{I}^{-}}\g_{\alpha} \text{ (Nilradical of }  \p_{I}\text{)}.
\end{array}
$$
{ Alternatively, we have the} decomposition $\g=\n_{I}^{-}\oplus \h_{I}\oplus \n_{I}^{+} = \p_{I}\oplus \n_{I}^{+}$ 
which we 
 call the generalized Cartan decomposition. 

In general, for any parabolic subalgebra $\p$ it is possible 
to choose a Borel subalgebra and a set of generators $I$ such that $\p=\p_{I}$. 
(See e.g.~\cite{Fulton::Repr,Humphreys::Repr} for details on parabolic Lie subalgebras.)
Moreover, it is known that the corresponding description of {the} BGG category $\mathcal{O}$ is absolutely parallel 
to the standard one (see e.g.~\cite{Humphreys::BGG} for details on BGG).
In particular, the parabolic BGG resolution was introduced in~\cite{Lepovsky} 
and may be 
{briefly} summarized in the following way:

Let $P^{+}\subset P$ be the set of integral dominant weights for the root system $R$ and Cartan subalgebra $\h$.
Let $P_{I}^{+}\subset P_{I}=P$ be the set of integral dominant weights for the root system $R_{I}$ and same Cartan subalgebra $\h$.
{That is} the elements of $P^{+}$ are in one-to-one correspondence with irreducible finite-dimensional $\g$-modules and
the elements of $P_{I}^{+}$ correspond to irreducible finite-dimensional $\h_{I}$-modules.
By $W_I\subset W$ we denote the Weyl group associated with $R_I$ and by $W^{I}$ we denote the right coset $W_I\backslash W$
consisting of elements of 
minimal length in $W$.

Any weight $\lambda\in P_{I}^{+}$ defines a finite-dimensional irreducible $\h_{I}$-module $L_{I}(\lambda)$
 with 
 highest weight $\lambda$ 
{as well as} the parabolic Verma module $M_{I}(\lambda):=U(\g)\otimes_{U(\p_{I})}L_{I}(\lambda)$.

\begin{theorem*}[\cite{Lepovsky}]
Let $\lambda\in P^{+}$ be a regular dominant weight, 
{and suppose that} 
$L(\lambda)$ be the corresponding irreducible $\g$-module 
with highest weight $\lambda$.
Then there is an exact sequence 
\begin{equation}
\label{eq::BGG::parabolic}
\ldots \to \ooplus_{\omega\in W^{I}, l(\omega)=k} M_{I}(\omega\cdot \lambda)
\to \ldots \to M_{I}(\lambda) \twoheadrightarrow L(\lambda) \to 0
\end{equation}
\end{theorem*}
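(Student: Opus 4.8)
The plan is to deduce the parabolic resolution from the classical (Bernstein--Gelfand--Gelfand, i.e.\ the case $I=\varnothing$) Verma resolution by an induction-and-spectral-sequence argument organised along the coset decomposition $W=W_I\cdot W^I$. Recall that every $w\in W$ factors uniquely as $w=w_I\,\omega$ with $w_I\in W_I$, $\omega\in W^I$, and that lengths add, $l(w)=l(w_I)+l(\omega)$; moreover each $\omega\in W^I$ sends the regular dominant $\lambda$ to a regular $\h_I$-dominant weight $\omega\cdot\lambda$, so that $L_I(\omega\cdot\lambda)$ and the parabolic Verma $M_I(\omega\cdot\lambda)$ are defined. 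The two combinatorial inputs I will use are this length additivity and the compatibility of the Bruhat order with the projection $w\mapsto\omega=w^{I}$: a covering $w'\lessdot w$ either leaves $w^{I}$ unchanged (and covers in the $W_I$-factor) or covers in $W^{I}$ (and leaves the $W_I$-factor unchanged).

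First I would produce, for each regular $\h_I$-dominant $\mu$, a resolution of $M_I(\mu)$ by ordinary Verma modules. Apply the classical BGG resolution inside the reductive Levi $\h_I$ to its finite-dimensional module $L_I(\mu)$, inflate the resulting $\h_I$-Vermas $M^{\h_I}(w_I\cdot\mu)$ to $\p_I$ (with $\n_I^{-}$ acting by zero), and apply $\Ind=U(\g)\otimes_{U(\p_I)}(-)$, which is exact since $U(\g)$ is free as a right $U(\p_I)$-module by the PBW decomposition $\g=\p_I\oplus\n_I^{+}$. By induction in stages for $\mathbf{b}^{-}\subset\p_I\subset\g$ one has $M(\nu)=\Ind_{\p_I}^{\g}\big(\Ind_{\mathbf{b}^{-}}^{\p_I}\mathbb{C}_{\nu}\big)$, and a weight count identifies $\Ind_{\mathbf{b}^{-}}^{\p_I}\mathbb{C}_{\nu}$ with $\mathrm{Infl}\,M^{\h_I}(\nu)$ (the nilradical $\n_I^{-}$ must act trivially, all weights lying in $\nu+\mathbb{Z}_{\ge 0}R_I^{+}$); hence $\Ind\,\mathrm{Infl}\,M^{\h_I}(\nu)\cong M(\nu)$ for every weight $\nu$. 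Since $w_I\cdot(\omega\cdot\lambda)=(w_I\omega)\cdot\lambda$, this yields for each $\omega\in W^{I}$ an exact complex
\[
\cdots\to\bigoplus_{w_I\in W_I,\,l(w_I)=p}M\big((w_I\omega)\cdot\lambda\big)\to\cdots\to M_I(\omega\cdot\lambda)\to 0 .
\]

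Next I would assemble these into a double complex $E_{p,q}=\bigoplus_{l(w_I)=p,\ l(\omega)=q}M\big((w_I\omega)\cdot\lambda\big)$, whose vertical differential is the induced $\h_I$-BGG differential just constructed and whose horizontal differential is the part of the ordinary BGG differential moving the $W^{I}$-factor. By length additivity the total complex is exactly the classical resolution $\bigoplus_{l(w)=k}M(w\cdot\lambda)$ of $L(\lambda)$, while the Bruhat compatibility guarantees that its differential splits into precisely these bidegree-$(-1,0)$ and $(0,-1)$ components. Running the spectral sequence that first takes vertical homology, the second paragraph collapses each column onto its $p=0$ term, giving $E^{1}_{0,q}=\bigoplus_{l(\omega)=q}M_I(\omega\cdot\lambda)$ with the induced horizontal differential; this row is exactly the claimed complex~\eqref{eq::BGG::parabolic}. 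Being concentrated in a single row, the spectral sequence degenerates, so the homology of that row equals the total homology, namely $L(\lambda)$ in degree zero and nothing else, which is the asserted exactness.

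The main obstacle is not the bookkeeping but the coherence of the arrows: one must choose scalars in the inflated $\h_I$-differentials and in the horizontal BGG maps so that $d^{v}d^{v}=d^{h}d^{h}=0$ and $d^{v}d^{h}+d^{h}d^{v}=0$, i.e.\ so that $d^{v}+d^{h}$ genuinely reproduces the classical differential. This rests on the fact that the space of homomorphisms between two of the relevant Verma modules is at most one-dimensional, so every square commutes up to a scalar; the delicate point, carried out in~\cite{Lepovsky}, is to fix a global sign convention (a ``standard'' coherent choice of arrows, exactly as in the original BGG construction) making all squares anticommute simultaneously. Once this strong form of the maps is secured, the spectral-sequence collapse above completes the proof.
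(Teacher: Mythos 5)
First, a remark on scope: the paper does not prove this statement at all --- it is imported verbatim from Lepowsky~\cite{Lepovsky} --- so there is no internal proof to compare with, and your argument has to stand on its own. As written it does not, because its central structural claim about the Bruhat order is false.

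You claim that every covering $w'\lessdot w$, written through the unique factorizations $w=w_I\omega$, $w'=w'_I\omega'$ (with $w_I,w'_I\in W_I$ and $\omega,\omega'\in W^{I}$), changes only one of the two factors, and hence that the classical BGG differential splits into pieces of bidegree $(-1,0)$ and $(0,-1)$ for the bigrading $(p,q)=(l(w_I),l(\omega))$. Counterexample: take $\g=\gl_3$, so $W=S_3$, with $I$ chosen so that the Levi is $\gl_2\oplus\gl_1$; then $W_I=\{e,s_1\}$ and $W^{I}=\{e,\,s_2,\,s_2s_1\}$. The pair $s_1\lessdot s_2s_1$ is a Bruhat covering, yet $s_2s_1=e\cdot(s_2s_1)$ while $s_1=s_1\cdot e$, so \emph{both} factors change. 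The corresponding component $M(s_2s_1\cdot\lambda)\to M(s_1\cdot\lambda)$ of the classical differential is nonzero (in the BGG resolution every covering pair carries a nonzero multiple of the canonical inclusion of Verma modules; this is forced by the anticommutativity of the squares and injectivity of composites), and in your bigrading it goes from $(0,2)$ to $(1,0)$, i.e.\ has bidegree $(+1,-2)$. So the classical resolution is \emph{not} the total complex of your double complex $E_{p,q}$: your vertical and horizontal pieces do not exhaust its differential, and the spectral sequence you propose to run does not exist. Switching to left cosets only relocates the problem (there $s_1\lessdot s_1s_2$ exhibits the same failure), so no change of convention rescues the claim.

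The rest of your machinery is sound --- exactness of $\Ind$, induction in stages, $\Ind\,\mathrm{Infl}\,M^{\h_I}(\nu)\cong M(\nu)$, and the identification of dot actions (which tacitly uses that $\rho-\rho_I$ is $W_I$-invariant; this should be said) --- and the strategy can be repaired by replacing the double complex with a filtration. The projection $w\mapsto w^{I}$ onto the minimal-length representative of $W_Iw$ is order-preserving for the Bruhat order (Deodhar; see Bj\"orner--Brenti), so the classical differential never increases $l(w^{I})$, although, as the example shows, it may drop it by more than one. Hence $F_j=\bigoplus_{l(w^{I})\le j}M(w\cdot\lambda)$ is a filtration by subcomplexes. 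Its associated graded complex retains exactly the components with $(w')^{I}=w^{I}$, splits as a direct sum over $\omega\in W^{I}$, and each summand is isomorphic --- after the rescaling argument you already flag, legitimate because the relevant Hom-spaces are at most one-dimensional, and using that coverings inside a coset $W_I\omega$ are exactly coverings in $W_I$ --- to your induced Levi resolution of $M_I(\omega\cdot\lambda)$, whose homology is $M_I(\omega\cdot\lambda)$ placed in homological degree $l(\omega)$. The spectral sequence of this finite filtration therefore has $E^{1}$ equal to the complex~\eqref{eq::BGG::parabolic} concentrated in a single complementary degree, all higher differentials vanish for degree reasons, and convergence to the homology of the total complex, namely $L(\lambda)$ in degree zero, gives exactly the asserted exactness. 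Note that this repaired route is genuinely different from Lepowsky's own proof in~\cite{Lepovsky}, which instead relativizes the original BGG construction, starting from the Koszul-type complex $U(\g)\otimes_{U(\p_{I})}\Lambda^{\udot}(\g/\p_{I})$.
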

For a nonregular integral dominant weight $\lambda\in P^{+}$ a similar resolution exists. 
The $k$-th 
{term} is isomorphic to the direct sum of 
all 
$M_{I}(\omega\cdot \lambda)$
such that $\omega$ is an element of the minimal length which sends $\lambda$ to $\omega\cdot\lambda$.

\subsection{Applications of BGG resolution}
\label{sec::parabolic::BGG}
We are interested in the applications of BGG for 
cohomological computations over parabolic subalgebras.
Indeed, we are going to prove the following:

\begin{theorem}
\label{thm::parab::vanish}
Let $\p_{I}\subset \g$ be a parabolic subalgebra in $\g$, $\h_{I}$ it's Levi subalgebra as above.
Then for any finite-dimensional $\g$-module $L$ we have the following
\begin{enumerate}
 \item 
\label{thm::parab::vanish::item1}
the higher relative cohomology of the parabolic subalgebra $\p_{I}$
with coefficients in $L$ vanishes and 
{elements of} the 
 {degree zero} cohomology are {all} $\g$-invariants:
\begin{equation}
\label{eq::H::p_I::g_mod}
\rH^{>0}(\p_{I},\h_{I};L) = 0 \text{ and } \rH^{0}(\p_{I},\h_{I};L) = [L]^{\g}.
\end{equation}
\item
\label{thm::parab::vanish::item3}
For any pair $\lambda\in P^{+}$ and $\mu\in P_{I}^{+}$ of integral weights, the relative 
extension groups between 
{the} irreducible $\g$-module $L(\lambda)$ and 
{the} irreducible $\p_{I}$-module $L_{I}(\lambda)$
{vanish} if $\lambda$ and $\mu$ are in 
different linkage  classes. 
Moreover, if there exists 
a
$\omega\in W^{I}$
such that $\omega\cdot \lambda = \mu$ then the cohomology 
$\rH^{\udot}(\p_{I},\h_{I};Hom(L(\lambda),L_{I}(\mu)))$ is one dimensional and has cohomological 
degree $l(\omega)$: 
$$
\rH^{\udot}(\p_{I},\h_{I};Hom(L(\lambda),L_{I}(\mu))) =
\left\{
\begin{array}{l}
 \kk[-l(\omega)], \text{ if }\omega\cdot \lambda = \mu \text{ for }\omega\in W^{I}, \\
0, \text{ otherwise. }
\end{array}
\right.
$$
\item
\label{thm::parab::vanish::item2}
The relative Hochschild-Serre spectral sequence associated with the embedding $\p_{I}\hookrightarrow\g$
{and} with coefficients in $L$ degenerates in the first term. In other words, we have the following collection of isomorphisms:
\begin{equation}
\rH^{\udot}(\g,\h_{I};L) \cong
\rH^{\udot}(\p_{I},\h_{I};L\otimes\Lambda^{\pt}({\n_{I}^{-}})) 
\cong \rH^{\udot}(\g,\h_{I};\kk)\otimes [L]^{\g} \cong
\left(\ooplus_{w\in W^{I}}\kk[-2l(w)]\right)\otimes [L]^{\g}
\end{equation}
where $W^{I}$ is the right coset $W_I\backslash W$
represented by elements of the minimal length in the Weyl group $W$ and by $\kk[-2l(w)]$ we mean the one-dimensional vector space
shifted 
by the 
homological degree $2l(w)$.
\end{enumerate}
\end{theorem}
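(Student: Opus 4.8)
The engine for all three parts is a single reduction together with Kostant's theorem, the latter being exactly the homological content of the parabolic BGG resolution~\eqref{eq::BGG::parabolic}. Since $\p_{I}=\h_{I}\ltimes\n_{I}^{-}$ with $\h_{I}$ reductive, the relative Chevalley--Eilenberg complex of $\p_{I}$ with coefficients in a finite-dimensional $\p_{I}$-module $M$ is $[\Lambda^{\udot}(\n_{I}^{-})^{*}\otimes M]^{\h_{I}}$, i.e. the $\h_{I}$-invariants of the absolute complex $C^{\udot}(\n_{I}^{-};M)$; as taking $\h_{I}$-invariants is exact it commutes with cohomology, giving $\rH^{\udot}(\p_{I},\h_{I};M)=[\rH^{\udot}(\n_{I}^{-};M)]^{\h_{I}}$. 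The plan is to prove part~(\ref{thm::parab::vanish::item3}) first, to deduce part~(\ref{thm::parab::vanish::item1}) as its specialization at $\mu=0$, and finally to obtain part~(\ref{thm::parab::vanish::item2}) by feeding both into the Hochschild--Serre spectral sequence of $\p_{I}\hookrightarrow\g$.

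For part~(\ref{thm::parab::vanish::item3}) I would write the coefficients as $\Hom(L(\lambda),L_{I}(\mu))=L(\lambda)^{*}\otimes L_{I}(\mu)$, on which $\n_{I}^{-}$ acts only through $L(\lambda)^{*}$ because the nilradical annihilates the irreducible $\p_{I}$-module $L_{I}(\mu)$. Hence $\rH^{\udot}(\n_{I}^{-};L(\lambda)^{*}\otimes L_{I}(\mu))=\rH^{\udot}(\n_{I}^{-};L(\lambda)^{*})\otimes L_{I}(\mu)$, and the finite-dimensional duality $\rH^{q}(\n_{I}^{-};N^{*})\cong\rH_{q}(\n_{I}^{-};N)^{*}$ combined with Kostant's theorem $\rH_{q}(\n_{I}^{-};L(\lambda))=\bigoplus_{\omega\in W^{I},\,l(\omega)=q}L_{I}(\omega\cdot\lambda)$ yields $\rH^{q}(\n_{I}^{-};L(\lambda)^{*})=\bigoplus_{\omega\in W^{I},\,l(\omega)=q}L_{I}(\omega\cdot\lambda)^{*}$. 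Taking $\h_{I}$-invariants and applying Schur's lemma gives
$$\rH^{q}(\p_{I},\h_{I};\Hom(L(\lambda),L_{I}(\mu)))=\bigoplus_{\omega\in W^{I},\,l(\omega)=q}\Hom_{\h_{I}}(L_{I}(\omega\cdot\lambda),L_{I}(\mu)),$$
where each summand is $\kk$ if $\omega\cdot\lambda=\mu$ and $0$ otherwise. Since the weights $\{\omega\cdot\lambda\}_{\omega\in W^{I}}$ are pairwise distinct and all lie in the linkage class of $\lambda$, at most one term survives, producing the claimed one-dimensional answer in degree $l(\omega)$ (and zero when $\lambda,\mu$ are not linked). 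Part~(\ref{thm::parab::vanish::item1}) then follows at $\mu=0$: decomposing the $\g$-module $L=\bigoplus_{\lambda}L(\lambda)^{\oplus m_{\lambda}}$ and using $L_{I}(0)=\kk$, the computation shows $\rH^{\udot}(\p_{I},\h_{I};L(\lambda))$ is nonzero only when $\omega\cdot\lambda=0$ for some $\omega\in W^{I}$; as $\lambda+\rho$ and $\rho$ are both dominant regular this forces $\lambda=0$ and $\omega=e$, whence $\rH^{>0}(\p_{I},\h_{I};L)=0$ and $\rH^{0}(\p_{I},\h_{I};L)=[L]^{\g}$.

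For part~(\ref{thm::parab::vanish::item2}) I would set up the relative Hochschild--Serre spectral sequence of $\p_{I}\hookrightarrow\g$, whose first term is $E_{1}^{p,q}=\rH^{q}(\p_{I},\h_{I};\Lambda^{p}(\g/\p_{I})^{*}\otimes L)$ with $\g/\p_{I}\cong\n_{I}^{+}$, so that $\Lambda^{p}(\g/\p_{I})^{*}\cong\Lambda^{p}(\n_{I}^{-})$ as $\h_{I}$-modules. Here the coefficient is a $\p_{I}$-module but \emph{not} a $\g$-module, so part~(\ref{thm::parab::vanish::item1}) does not apply directly; instead I would filter it by $\p_{I}$-submodules with irreducible subquotients $L_{I}(\mu)$ and apply part~(\ref{thm::parab::vanish::item3}) to each resulting $\Hom(L(\lambda),L_{I}(\mu))$. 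The outcome, which generalizes Lemma~\ref{lm::H_gl::to::H_b}, is that $E_{1}^{p,q}$ vanishes unless $p=q$ and that
$$E_{1}^{p,p}=\rH^{2p}(\g,\h_{I};\kk)\otimes[L]^{\g},\qquad \rH^{\udot}(\g,\h_{I};\kk)\cong\bigoplus_{w\in W^{I}}\kk[-2l(w)],$$
the latter being the even cohomology of the generalized flag variety $G/P_{I}$ (Borel--Kostant; Schubert cells indexed by $W^{I}$). Concentration on the diagonal $p=q$ forces every $d_{r}$ with $r\ge 1$ to vanish, so the spectral sequence degenerates at $E_{1}$; this degeneration is the first isomorphism, while the identification of $E_{1}^{p,p}$ supplies the remaining two, giving the full chain.

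The main obstacle is precisely the bidegree bookkeeping in part~(\ref{thm::parab::vanish::item2}): one must check that the cohomological degree $q=l(\omega)$ delivered by part~(\ref{thm::parab::vanish::item3}) coincides with the exterior (filtration) degree $p$ at which the relevant $\h_{I}$-type $L_{I}(\omega\cdot\lambda)$ genuinely occurs in $\Lambda^{\udot}(\n_{I}^{-})\otimes L$. This matching is exactly Kostant's theorem for $\n_{I}^{+}$, each $w\in W^{I}$ contributing in bidegree $(l(w),l(w))$, and it is what pins the entire $E_{1}$-page to the diagonal; everything else is the formal reduction to $\n_{I}^{-}$-cohomology together with Schur's lemma. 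For reductive rather than semisimple $\g$ the argument applies verbatim, the center being absorbed into $\h_{I}$ and acting by a single character on each isotypic component.
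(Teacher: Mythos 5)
Your proposal is correct in substance and runs on the same engine as the paper's: as you note yourself, Kostant's theorem is the homological content of the parabolic BGG resolution~\eqref{eq::BGG::parabolic}, and that resolution is precisely what the paper uses. The difference is in execution. The paper never passes through nilradical cohomology: it applies $\Hom(-,N)$ to Lepowsky's resolution and collapses each term by the Shapiro-type identity $\rH^{\udot}(\p_{I},\h_{I};\Hom(M_{I}(\nu),N))=\Hom_{\h_{I}}(L_{I}(\nu),N)$, obtaining the master formula~\eqref{eq::Hom::BGG}; parts~\ref{thm::parab::vanish::item1}, \ref{thm::parab::vanish::item3}, \ref{thm::parab::vanish::item2} are then the specializations $N=\kk$, $N=L_{I}(\mu)$, $N=\Lambda^{\udot}\n_{I}^{-}$, each read off directly (in particular the paper proves part~\ref{thm::parab::vanish::item1} independently, not as the $\mu=0$ case of part~\ref{thm::parab::vanish::item3}). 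Your reduction $\rH^{\udot}(\p_{I},\h_{I};M)=[\rH^{\udot}(\n_{I}^{-};M)]^{\h_{I}}$ combined with Kostant homology, duality and Schur's lemma buys the first two parts without ever invoking relative projectivity of parabolic Verma modules, which is a fair trade. The genuine divergence is part~\ref{thm::parab::vanish::item2}: the paper evaluates its master formula at $N=\Lambda^{\udot}\n_{I}^{-}$ in one stroke, whereas you run the Hochschild--Serre $E_{1}$-page and filter $\Lambda^{p}(\n_{I}^{-})$ by $\p_{I}$-subquotients so as to feed in part~\ref{thm::parab::vanish::item3}; this works, and your choice to keep the $\g$-irreducible factor whole and filter only the exterior factor is exactly what saves the argument (filtering the whole coefficient $\Lambda^{p}(\n_{I}^{-})\otimes L$ into irreducible $\p_{I}$-subquotients would only bound the cohomology, since that filtration spectral sequence has cancelling differentials in general). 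The one step you should tighten: the bidegree matching you attribute to ``Kostant's theorem for $\n_{I}^{+}$'' is a statement about \emph{chains}, not about cohomology. Kostant guarantees that $L_{I}(\omega\cdot 0)$ occurs in exterior degree $l(\omega)$, but it does not by itself forbid that type, or the types $L_{I}(\omega\cdot\lambda)$ with $\lambda\neq 0$, from occurring in other exterior degrees of $\Lambda^{\udot}(\n_{I}^{-})$. What excludes them are the two root-combinatorial lemmas the paper proves on the way: the $\h$-weights of $\Lambda^{\udot}\n^{-}$ lie in the convex hull of the dot-orbit $W\cdot 0$, so $\lambda\neq 0$ never contributes, and the weight $\omega\cdot 0$ has multiplicity one in $\Lambda^{\udot}\n^{-}$, realized inside $\Lambda^{l(\omega)}\n_{I}^{-}$ exactly when $\omega$ is the minimal-length representative of its coset, i.e. $\omega\in W^{I}$. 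Once those two facts are quoted (or reproved as in the paper's Borel-case discussion), your argument is complete.
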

\begin{proof}
Consider the category $(\p_{I},\h_{I})\ttt mod$ of $\p_{I}$-modules that are semi-simple as $\h_{I}$-modules.
{In other words} each module is a sum (probably infinite) of finite-dimensional $\h_{I}$-modules.
Then the
Lie algebra cohomology which we are looking for may be considered as the derived $Hom$-functor in this category.
In particular, we can use 
the 
BGG resolution $B_{\lambda}^{\udot}$  
of the irreducible $\g$-module $L(\lambda)$ as defined in~\eqref{eq::BGG::parabolic}
in order to compute the derived homomorphisms 
between $L(\lambda)$ 
and arbitrary module $N\in mod\ttt(\p_{I},\h_{I})$:
\begin{multline}
\label{eq::Hom::BGG}
\rH^{\udot}(\p_{I},\h_{I};Hom(L(\lambda),N)) = 
\rH^{\udot}(\p_{I},\h_{I};Hom(\ooplus_{\omega\in W^{I}} M_{I}(\omega\cdot \lambda)[-l(\omega)],N)) = \\
\rH^{\udot}\left(\ooplus_{\omega\in W^{I}}Hom_{\h_{I}}(L_{I}(\omega\cdot \lambda),N)[-l(\omega)]\right)
\end{multline}
In particular, if $N$ is a trivial $\g$-module then $Hom_{\h_{I}}(L_{I}(\omega\cdot\lambda),\kk)$ differs from zero only for $\omega=Id$
and $\lambda=0$. 
What means that we have checked Identity~\eqref{eq::H::p_I::g_mod} for irreducible $\g$-module $N$.
The semisimplicity of the category of finite-dimensional $\g$-modules implies that Identity~\ref{eq::H::p_I::g_mod} 
is true for all finite-dimensional $\g$-modules and item~\ref{thm::parab::vanish::item1} is 
proved. 

If $N$ is isomorphic to an irreducible $\h_{I}$-module $L_{I}(\mu)$ with 
highest weight $\mu$ then there exists at most one 
$\omega\in W^{I}$ such that $\omega\cdot\lambda = \mu$. 
Consequently, the group $Ext^{\udot}_{(\p_{I},\h_{I})\ttt mod}(L(\lambda),L_{I}(\mu))$
is at most one-dimensional and item~\ref{thm::parab::vanish::item3} is 
proved.

We will prove first the item~\ref{thm::parab::vanish::item2} for the most degenerate case, for which 
 $\p_{I}$ coincides with 
{the} Borel subalgebra $\mathbf{b}_{-}$.
Recall  the semisimple decomposition $\n^{-}\cong\ooplus_{\alpha\in R^{-}} \g_{\alpha}$ 
of $\n^{-}$ as $\h$-modules.
Each $\g_{\alpha}$-is one-dimensional and we get the $\h$-decomposition of the exterior algebra:
$$
\Lambda^{\udot}\n^{-} \cong \ooplus_{S\subset R^{-}} \Lambda_{\alpha\in S} \g_{\alpha}.
$$
In particular, $\Lambda^{top}\n^{-}$ has 
weight $-2\rho$ (the sum of all negative roots).
Therefore, the set $X$ of $\h$-eigen values in $\Lambda^{\udot}n^{-}$
is the intersection of the convex hull of the orbit $W\cdot 0$ for the dot-action 
and the root lattice. 
In particular, for any integral dominant weight $\lambda\in P^{+}$, 
the intersection of the orbit $W\cdot\lambda$ 
and $X$ is nontrivial only if $\lambda=0$.
In order to capture the possible values of $\omega$ we have to recall another definition of the length.
(See e.g.~\cite{Bourbaki}.)
The length $l(\omega)$ of an element $\omega$ of the Weyl group
may be defined as the cardinality of the set $\omega(R^{+})\cap R^{-}$.
{In other words} $\omega$ sends exactly $l(\omega)$ positive roots to negative roots.
Denote by $n_{\omega}^{-}:=\ooplus_{\alpha\in(\omega(R^{+})\cap R^{-})} \g_{\alpha}$
and $n_{\omega}^{+}:=\ooplus_{\alpha\in(R^{+}\cap \omega(R^{-}))} \g_{\alpha}$ the corresponding dual 
subspaces in $n^{-}$ and $n^{+}$ respectively.
For any element $\omega\in W$ we get the isomorphisms: 
$$
Hom_{\h}(\kk_{\omega\cdot 0},\Lambda^{\udot}(\n^{-})) = 
Hom_{\h}(\kk_{\omega\cdot 0},\Lambda^{l(\omega)}(\n_{\omega}^{-})) \cong \kk,
$$
where $\kk_{\omega\cdot 0}$ is the one-dimensional $\h$-module of weight $\omega\cdot 0$.
The semisimplicity of the category of finite-dimensional $\g$-modules implies item~\ref{thm::parab::vanish::item2} 
 for the case of {a} general $\g$-module $L$ and 
 $\p_{I}$ being a Borel subalgebra ($I$ is empty):
\begin{multline*}
\rH^{\udot}(\mathbf{b}_{-},\h;Hom(L,\Lambda^{\udot}\n^{-})) = 
\rH^{\udot}(\mathbf{b}_{-},\h;Hom(\ooplus_{\omega\in W} M(\omega\cdot 0)[-l(\omega)],\Lambda^{\udot}\n^{-}))\otimes [L]^{\g} = \\
= \rH^{\udot}\left(\ooplus_{\omega\in W}Hom_{\h}(\kk_{\omega\cdot 0},\Lambda^{\udot}\n^{-})[-l(\omega)]\right)\otimes [L]^{\g} = \\
=
\rH^{\udot}\left(\ooplus_{\omega\in W}Hom_{\h}(\kk_{\omega\cdot 0},\Lambda^{l(\omega)}\n_{\omega}^{-}[-l(\omega)])[-l(\omega)]\right)\otimes [L]^{\g} =
\left(\ooplus_{\omega\in W} \kk[-2l(\omega)]\right)\otimes [L]^{\g}.
\end{multline*}

The same arguments works in the case of {an} arbitrary parabolic subalgebra $\p_{I}$.
It is enough to mention that possible $\h$-weights in the exterior algebra $\Lambda^{\udot}\n_I^{-}$ are bounded from above by 
the weights of $\Lambda^{\udot}\n^{-}$ and, therefore, the derived hom from any nontrivial $\g$-representation $L(\lambda)$ 
to $\Lambda^{\udot}\n_I^{-}$ is zero.
Moreover, the representatives in the right coset $W_{I}\backslash W$ were chosen to have 
minimal length.
In particular, this follows that $\n_{\omega}^{-}$ belongs to $\n_{I}^{-}$ iff $\omega\in W^{I}$ 
and, therefore,
there is exactly one nontrivial $\h_{I}$-homomorphism from $L_{I}(\omega\cdot 0)$ to $\Lambda^{\udot}n_{I}^{-}$
which factors through the subspace $\Lambda^{l(\omega)}n_{\omega}^{-}$.
\end{proof}

\subsubsection{Particular case of matrices with two blocks}
\label{sec::parab::gl::apendix}
Let us explain what we get 
for 
$\g=\gl_{m+n}$ and $\p_{I}=\b$ which is 
our main 
{case of} interest.
In this case $\h$ consists of diagonal matrices and has dimension $m+n$;
$\h_{I}$ is isomorphic to $\gl_m\oplus\gl_n$.
Integral dominant weights are numbered by non-increasing sequences of integer numbers:
$$
P^{+} := \{\lambda=(\lambda_1\geq\ldots\geq \lambda_{m+n}) : \lambda_i\in\Z\} 
\text{ and }
P^{+}_{I}:= \{\lambda=(\lambda_1\geq\ldots\geq \lambda_{m};\lambda_{m+1}\geq\ldots\geq\lambda_{m+n}) : \lambda_i\in\Z\} 
$$
The full Weyl group $W$ is the symmetric group $S_{m+n}$ and the subgroup $W_{I}$ is a product $S_m\times S_n$ of two 
symmetric groups. 
Consequently, the right coset $W^{I}:=W_{I}\backslash W= S_m\times S_n\backslash S_{m+n}$ consists of $m$-$n$--shuffle permutations.
{That is,} $\omega\in S_{m+n}$ belongs to  $W^{I}$ iff $\omega(1)<$\ldots$<\omega(m)$ and $\omega(m+1)<$\ldots$<\omega(m+n)$

The dot-action $\omega\cdot\lambda$ is the standard action shifted by $-\rho$.
{For} $\g=\gl_{m+n}$ the half-sum of positive roots $\rho$ is equal to 
$\frac{1}{2}(m+n-1,m+n-3,\ldots,3-m-n,1-m-n)$. Consequently, for $\omega\in S_{m+n}$ the dot-action looks as follows:
$$
\omega\cdot \lambda = \omega(\lambda+\rho)-\rho = (\lambda_{\omega(1)}+1-\omega(1),\ldots,\lambda_{\omega(n)}+n-\omega(n))
$$

\begin{corollary}
\label{cor::Ext:gl:b}
Let $L_{I}({\mu})$ be an irreducible polynomial $\gl_n$-module with 
highest weight $\mu=(\mu_1\geq\ldots\geq\mu_n\geq 0)$.
Consider $L_{I}({\mu})^{*}=L_{I}(-\mu)$ as an irreducible $\b$-module,
so that
and of the nilpotent part $\n_{I}^{-}$ 
are 
trivial.
Then for any irreducible $\gl_{m+n}$-module $L(\lambda)$ with 
highest weight $\lambda=(\lambda_1\geq\ldots\geq \lambda_{m+n})$
relative cohomology $\rH^{\udot}(\b,\gl_m\oplus\gl_n;Hom(L(\lambda),L_{I}(-\mu)))$ vanishes except {in} the case 
$\lambda= -\mu$ where they are one dimensional 
in 
the homological degree $0$:
\begin{gather*}
 \rH^{>0}(\b,\gl_m\oplus\gl_n;Hom(L(\lambda),L_{I}(-\mu))) = 0 \ \text{ for all }\ \lambda,\mu \\ 
\text{and }\ \rH^{0}(\b,\gl_m\oplus\gl_n;Hom(L(\lambda),L_{I}(-\mu))) = \kk\ \text{ iff } \
\lambda_1=\ldots=\lambda_m=0 \ \& \ \lambda_{m+i}=-\mu_{n-i}.
\end{gather*}
\end{corollary}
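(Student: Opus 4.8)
The plan is to read off this corollary as the concrete matrix instance of part~\ref{thm::parab::vanish::item3} of Theorem~\ref{thm::parab::vanish}, specialized to $\g=\gl_{m+n}$ and the two-block parabolic $\b$ whose Levi factor is $\gl_m\oplus\gl_n$, together with the explicit combinatorics of the dot-action recorded in Section~\ref{sec::parab::gl::apendix}. By that part the graded space $\rH^{\udot}(\b,\gl_m\oplus\gl_n;\Hom(L(\lambda),L_{I}(-\mu)))$ is nonzero precisely when the $\h_{I}$-highest weight $\nu$ of the coefficient module $L_{I}(-\mu)$ lies in the set $\{\omega\cdot\lambda:\omega\in W^{I}\}$, and in that case it is one-dimensional, sitting in cohomological degree $l(\omega)$ for the unique $m$-$n$-shuffle $\omega\in W^{I}$ with $\omega\cdot\lambda=\nu$. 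So the whole problem reduces to solving $\omega\cdot\lambda=\nu$ with $\omega$ ranging over the shuffle representatives of $W_I\backslash W=(S_m\times S_n)\backslash S_{m+n}$.

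First I would pin down the weight $\nu$. Since $L_{I}(-\mu)=L_{I}(\mu)^{*}$ carries the trivial $\gl_m$- and nilradical action, its $\h_{I}=\gl_m\oplus\gl_n$-highest weight is $\nu=(\,\underbrace{0,\dots,0}_{m}\,;\,-w_0\mu\,)$, where $w_0$ is the longest element of $S_n$, so $-w_0\mu=(-\mu_n,-\mu_{n-1},\dots,-\mu_1)$. The decisive observation is that $\nu$ is not merely $\h_{I}$-dominant but in fact $\g$-dominant: the first $m$ entries vanish, and because $\mu_n\ge 0$ the second block begins with $-\mu_n\le 0$ and then reads $-\mu_n\ge -\mu_{n-1}\ge\dots\ge -\mu_1$, so the whole $(m+n)$-tuple $\nu$ is non-increasing, i.e. $\nu\in P^{+}$.

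Now both $\lambda$ and $\nu$ lie in $P^{+}$, so $\lambda+\rho$ and $\nu+\rho$ are regular dominant. If $\omega\cdot\lambda=\nu$ for some $\omega\in W^{I}\subset W=S_{m+n}$, then $\omega(\lambda+\rho)=\nu+\rho$, placing the two regular dominant weights $\lambda+\rho$ and $\nu+\rho$ in one $W$-orbit; but a regular $W$-orbit meets the open dominant chamber in exactly one point, which forces $\lambda+\rho=\nu+\rho$, and then $\omega$ fixes the regular weight $\lambda+\rho$, hence $\omega=\Id$. Thus a solution exists iff $\lambda=\nu$, in which case $\omega=\Id$ and $l(\omega)=0$. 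Feeding this back into part~\ref{thm::parab::vanish::item3} yields $\rH^{>0}=0$ for all $\lambda,\mu$, since the only possible nonzero contribution always lands in degree $0$, and $\rH^{0}=\kk$ exactly when $\lambda=\nu$, i.e. when the first $m$ coordinates of $\lambda$ vanish and its last $n$ coordinates equal $-w_0\mu$ — the coordinatewise condition displayed in the statement. The one place where care is genuinely required, and the only spot the argument can fail, is the bookkeeping of the dualization $L_{I}(\mu)^{*}=L_{I}(-w_0\mu)$ and the attendant check that the resulting $\nu$ is $\g$-dominant; once that is secured, the uniqueness of dominant representatives in a regular orbit does all the remaining work.
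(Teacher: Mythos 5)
Your proposal is correct and follows essentially the same route as the paper: both specialize part~\ref{thm::parab::vanish::item3} of Theorem~\ref{thm::parab::vanish} to $\g=\gl_{m+n}$, $\p_I=\b$, $\h_I=\gl_m\oplus\gl_n$, observe that the coefficient weight $(0,\dots,0,-\mu_n,\dots,-\mu_1)$ is dominant for all of $\g$, and conclude that $\omega\cdot\lambda=-\mu$ forces $\omega=\Id$ and $\lambda=-\mu$ because a dot-orbit contains at most one integral dominant weight. The only difference is cosmetic: you spell out the standard $\rho$-shift justification of that uniqueness (regular dominant weights meet the open chamber once) and track the dualization $L_I(\mu)^{*}=L_I(-w_0\mu)$ explicitly, where the paper simply cites the fact and absorbs $w_0$ into its notation $L_I(-\mu)$.
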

\begin{proof}
We apply item~\ref{thm::parab::vanish::item3} of Theorem~\ref{thm::parab::vanish} for the case $\g=\gl_{m+n}$,
$\p_{I}=\b$ and $\h_{I}=\gl_m\oplus\gl_n$.

The weight $(0,\ldots,0\geq-\mu_{n}\geq\ldots\geq -\mu_{1})$ is dominant for the Lie algebra $\g$.
Hence, the dot-action $\omega\cdot (-\mu)$ for a nontrivial shuffle permutation $\omega\neq Id$ will 
{no longer be} a dominant weight
because in each dot-orbit of the Weyl group there is no more than one integral dominant weight.
Therefore, $\omega\cdot \lambda = -\mu$ 
if and only if 
$\omega=Id$ and $\lambda= -\mu$. 
\end{proof}

Let us reformulate Corollary~\ref{cor::Ext:gl:b} in terms of Schur functors as 
{this is needed} in Lemma~\ref{lm::H_b::coeff}.
Indeed, if $\mu$ is 
the 
highest weight of a polynomial $\gl_n$-module, that is $\mu$ is a Young diagram,
then $L_{I}(\mu)=S^{\mu}(U)$. 
Let $L(\mu):=S^{\mu}(V\oplus U)$ be the corresponding irreducible polynomial $\gl_{m+n}$-module
with the same highest weight.
Thus Corollary~\ref{cor::Ext:gl:b} implies coincidence of dimensions:
$$
\dim \rH^{0}(\b,\gl_m\oplus\gl_n;Hom(S^{\mu}U; L(\lambda))) = \delta_{\lambda,\mu} = 
\dim Hom_{\gl_{m+n}}(S^{\mu}(U\oplus V); L(\lambda)).
$$
Since the category of finite-dimensional $\gl_{m+n}$-modules is semi-simple we get an isomorphism 
for arbitrary $\gl_{m+n}$-module $L$:
\begin{equation}
\label{eq::Hb=inv}
\rH^{0}(\b,\gl_m\oplus\gl_n;Hom(S^{\mu}U; L)) \cong  Hom_{\gl_{m+n}}(S^{\mu}(U\oplus V); L).
\end{equation}
This is the reformulation of Corollary~\ref{cor::Ext:gl:b} given in Lemma~\ref{lm::H_b::coeff}.

\bigskip
\noindent
{\footnotesize 
\textbf{Anton~Khoroshkin}:\newline
Faculty of Mathematics for Economics, Vavilova 7, National Research University
Higher School of Economics, Moscow 101990, Russia
\newline
and 
ITEP, Bolshaya Cheremushkinskaya 25, 117259, Moscow, Russia
}
\\
{\it email address:} {\footnotesize $\langle$\textbf{akhoroshkin@hse.ru}$\rangle$}

\end{document}